\newtheorem{theorem}{Theorem}[section]
\newtheorem{lemma}[theorem]{Lemma}
\newtheorem{corollary}[theorem]{Corollary}
\newtheorem{proposition}[theorem]{Proposition}
\newtheoremstyle{definition}{}{}{}{}{}{}{2ex}{\bf #1.}
\theoremstyle{definition}
\newtheorem*{definition}{Definition}
\newtheorem*{claim}{Claim}
\newtheorem*{example}{Example}
\newtheorem*{remark}{Remark}
\newtheorem*{conjecture}{Conjecture}
\newcommand{\F}{\mathbf{F}}
\newcommand{\Z}{\mathbf{Z}}
\newcommand{\N}{\mathbf{N}}
\newcommand{\R}{\mathbf{R}}
\newcommand{\Q}{\mathbf{R}}
\newcommand{\Mat}{\mathrm{Mat}}
\newcommand{\PGL}{\mathrm{PGL}}
\newcommand{\GL}{\mathrm{GL}}
\newcommand{\SL}{\mathrm{SL}}
\DeclareMathOperator{\ty}{ty}
\DeclareMathOperator{\cyc}{cyc}
\DeclareMathOperator{\diag}{diag}
\DeclareMathOperator{\Cent}{Cent}
\DeclareMathOperator{\chr}{char}
\DeclareMathOperator{\im}{im}
\DeclareMathOperator{\height}{ht}
\DeclareMathOperator{\rank}{rank}
\DeclareMathOperator{\nullity}{null}
\DeclareMathOperator{\hcf}{hcf}
\DeclareMathOperator{\lcm}{lcm}
\begin{document}

\begin{abstract}
This paper addresses various questions about pairs of similarity classes of matrices which contain commuting elements.
In the case of matrices over finite fields, we show that the problem of determining such pairs reduces to a question
about nilpotent classes; this reduction
makes use of class types in the sense of Steinberg and Green.
We investigate the set of scalars that arise as determinants of elements of the
centralizer algebra of a matrix, providing a complete description of this set in terms of the class type of the matrix.

Several results are established concerning the commuting of
nilpotent classes. Classes which are represented in the centralizer of every nilpotent matrix are classified---this result holds over any field.
Nilpotent classes are parametrized by partitions; we find pairs of partitions whose corresponding nilpotent classes commute over
some finite fields, but not over others. We conclude by classifying all pairs of classes, parametrized by two-part partitions, that commute. Our
results on nilpotent classes complement work of Ko\v{s}ir and Oblak.
\end{abstract}

\title[Types and classes of commuting matrices]
{On types and classes of commuting matrices over finite fields}


\author{John R. Britnell and Mark Wildon}
\date{15 December 2009}


\maketitle

\section{General introduction}\label{sect:introduction}

Let $\F_q$ be a finite field, and let $C$ and $D$
be classes of similar matrices in $\Mat_n(\F_q)$. We say that
$C$ and $D$ \emph{commute} if there exist commuting
matrices $X$ and $Y$ such that $X \in C$ and~$Y \in D$. In this paper we
are concerned with the problem of deciding
which similarity classes commute.

A matrix is determined up to similarity by its rational
canonical form. This however is usually too sharp a tool
for our purposes, and many of our results are instead
stated in terms of the \emph{class type} of a matrix. This notion,
which seems first to have appeared in the work of Steinberg \cite{Steinberg},
is important in
Green's influential paper \cite{Green} on the characters of finite general linear groups.
Lemma 2.1 of that paper implies that the type of a matrix determines its centralizer up to isomorphism; this fact
is also implied by our Theorem \ref{thm:conjugatecentralizer}, which says that two matrices with the same class type have
conjugate centralizers.

The main body of this paper is divided into three sections.
In \S 2 we develop
a theory of commuting class types; the results of this section
reduce the general problem of determining commuting classes
to the case of nilpotent classes.
A key step in this reduction is
Theorem~\ref{thm:typecommute}, which states that if similarity classes $C$ and $D$ commute, then any class of the
type of $C$ commutes with any class of the type of $D$.

Relationships between class types and determinants are discussed in \S 3. We provide a complete account of those scalars which appear as determinants in the centralizer of a
matrix of a given type; this result, stated as Theorem \ref{thm:determinants}, has appeared without proof in \cite[\S 3.4]{BW1}, and as we promised there,
we present the proof here. We also discuss 
the problem of determining which
scalars appear as the determinant of a matrix of a given type. This problem appears intractable in general, and we provide only a very
partial answer.  But we identify a special case
of the problem which leads to a difficult but highly interesting combinatorial problem, to which we formulate Conjecture \ref{conj:sumsets} as
a plausible solution.

In \S 4 we make several observations concerning the problem of
commuting nilpotent classes;
this is a problem which has attracted attention in several different
contexts over the years,
and there is every reason to suppose that it is hard.
Among other results, we determine
in Theorem~\ref{thm:universal}
the nilpotent classes
which commute with every other
nilpotent class of the same dimension, 
and in Theorem~\ref{thm:2partcommuting}
we classify all pairs of commuting nilpotent classes of matrices whose
nullities are at most $2$. 
We describe a construction on matrices which produces interesting
and non-obvious examples of commuting nilpotent classes.
This construction motivates Theorem~\ref{thm:fielddependent},
which says that for every prime $p$
and positive integer~$r$, there exists a pair of classes of nilpotent
matrices which
commute over the field $\F_{p^a}$ if and only if $a>r$. As far as
the authors are
aware, it has not previously been observed that
the commuting of nilpotent classes, as parameterized by partitions,
is dependent on the field of definition.

More detailed outlines of the results of \S 2, \S 3 and \S 4
are to be found at the
beginnings of those sections.

\subsection{Background definitions}
We collect here the main
prerequisite definitions concerning partitions, classes and class types
that we require.

\subsubsection*{Partitions.}
We define a \emph{partition} to be a weakly decreasing sequence of finite length whose terms are positive integers;
these terms are called the \emph{parts} of the partition.
We shall denote the $j$-th part of a partition~$\lambda$
by $\lambda(j)$. The sum of the parts of $\lambda$ is written as $|\lambda|$.

Given partitions $\lambda$ and $\mu$, we write $\lambda + \mu$
for the partition of $|\lambda| + |\mu|$ whose multiset of
parts is the union of the multisets of parts of $\lambda$
and of $\mu$. We shall write
$2\lambda$ for $\lambda+\lambda$, and similarly we shall
define $t\lambda$
for all integers $t \in \N_0$.
A partition $\mu$ will be said to be \emph{$t$-divisible} if it is expressible as $t\lambda$ for some partition $\lambda$;
if $s\lambda = t\mu$ then we may write $\mu=\frac{s}{t}\lambda$.

We shall require the \emph{dominance order} $\unrhd$ on partitions.
For two partitions $\lambda$ and $\mu$ we say that $\lambda$ \emph{dominates} $\mu$, and write
$\lambda \unrhd \mu$ (or $\mu\unlhd\lambda$) if
\[ \sum_{i=1}^j \lambda(i) \ge \sum_{i=1}^j \mu(i) \]
for all $j \in \N$. (If $i$ exceeds the number
of parts in a partition, then the corresponding part 
is taken
to be $0$.)

Let $\lambda$ be a partition with largest part $\lambda(1) = a$.
The \emph{conjugate partition} $\overline{\lambda}$ is defined to be
$(\overline{\lambda}(1),\dots,\overline{\lambda}(a))$,
where $\overline{\lambda}(j)$ is the number of parts of $\lambda$ of size
at least $j$. It is a well-known fact (see for instance \cite[1.11]{Macdonald}) that the conjugation operation on partitions reverses the dominance order; that is,
$\lambda\unrhd \mu$ if and only if  
$\overline{\mu}\unrhd\overline{\lambda}$.

A geometric interpretation of the dominance order is developed
by Gerstenhaber
in \cite{Gerstenhaber1} and \cite{Gerstenhaber2}; the
issues with which the latter paper is concerned are similar in many respects to those considered in \S 4 of the present paper,
although Gerstenhaber's approach using algebraic varieties is very different.

\subsubsection*{Similarity classes.}
Let $K$ 
be a field. A class of similar matrices in
$\Mat_n(K)$ 
is determined by the following data: a finite set $\mathcal{F}$ of
irreducible polynomials over $K$, 
and for each
$f\in \mathcal{F}$ a partition $\lambda_f$ of a positive integer,
such that
\[
n=\sum_{f\in \mathcal{F}}|\lambda_f|\deg f.
\]
The characteristic polynomial of a matrix $M$ in this class is
$\prod_f f^{|\lambda_f|}$.
There is a decomposition of $V$ given by
\[
V=\bigoplus_f\bigoplus_jV_{f}(j),
\]
where $M$ acts indecomposably on the subspace $V_{f}(j)$
with characteristic polynomial $f^{\lambda_f(j)}$.
This decomposition is, in general, not unique. By a change of basis, we may express $M$ as
$\bigoplus_f\bigoplus_jP_f(j)$, where $P_f(j)$ is a matrix representing
 the action of $M$ on $V_f(j)$;
we say that~$P_f(j)$ is a \emph{cyclic block} of~$M$.

If $\mathcal{F}=\{f_1,\dots, f_t\}$ and the associated partitions are
$\lambda_1,\dots, \lambda_t$ respectively,
then we shall define the
\emph{cycle type} of $M$ to be the formal expression
\[
\cyc(M) = f_1^{\lambda_1}\cdots f_t^{\lambda_t}.
\]
The order in which the polynomials appear in this expression is, of course,
unimportant.

\subsubsection*{Nilpotent classes.}
We shall denote by $N(\lambda)$ the similarity class of nilpotent
matrices with cycle type
$f_0^{\lambda}$, where $f_0(x)=x$.
We denote by $J(\lambda)$ the unique matrix in upper-triangular
Jordan form in the similarity class $N(\lambda)$.

If $\lambda=(\lambda(1),\dots,\lambda(k))$ we shall omit
unnecessary brackets by writing
$N(\lambda(1),\dots,\lambda(k))$ for $N(\lambda)$
and $J(\lambda(1),\dots,\lambda(k))$ for $J(\lambda)$.

\subsubsection*{Class types.}
More general than the notion of similarity class is that of class type.
If $M$ is a matrix of cycle type $f_1^{\lambda_1}\cdots f_t^{\lambda_t}$,
where for each $i$ the polynomial $f_i$ has degree $d_i$,
then the \emph{class type} of $M$ is the formal string
\[
\ty(M) = d_1^{\lambda_1}\cdots d_t^{\lambda_t}.
\]
Here too, the order of the terms is unimportant.

Any string of this form will be called a \emph{type}.
The \emph{dimension} of the type $d_1^{\lambda_1}\cdots d_t^{\lambda_t}$
is defined to be $d_1 |\lambda_1| +  \cdots + d_t |\lambda_t|$.
We shall say that the type $T$ is \emph{representable} over a field $K$
if there exists a matrix of class type $T$ with entries in $K$;
the dimension of such a matrix
is the same as the dimension of the type.
Clearly not all types are representable over all fields;
for instance the type
$1^{\lambda}1^{\mu}1^{\nu}$ is not representable over $\F_2$
since there are
only two
distinct linear polynomials over this field; similarly $3^{\lambda}$ is not representable over $\R$ since there are no irreducible cubics
over $\R$.

Similar matrices have the same cycle type and the same class type, and so we may meaningfully attribute types of either kind to similarity classes.

We shall say that a class type $T$ is \emph{primary} if it is $d^{\lambda}$ for some~$d$ and~$\lambda$. Otherwise $T$ is \emph{compound}.
If $d^{\lambda}$ appears as a term in the type $T$, we say that $d^\lambda$ is a \emph{primary component} of~$T$. We may also say that a matrix, a similarity class of matrices,
or a cycle type is primary or compound, according to its class type, and we may refer to its primary components.

We have already defined what it means for two similarity
classes to commute. We generalise this idea to types, as follows.
\begin{definition}
Let $S$ and $T$ be class types. We say
that $S$ and $T$ \emph{commute} over a field~$K$ if
there are matrices $X$ and $Y$ over $K$
such that $X$ has class type $S$,
and $Y$ has class type $T$, and $X$ and $Y$ commute.
\end{definition}
The field $K$ will not always be mentioned explicitly
if it is clear from the context.

\section{Commuting types of matrices}\label{sect:reduction}
This section proceeds as follows.
In \S 2.1 we prove several results relating the class type of a polynomial
in a matrix $M$ to the class type of $M$, leading up to
Theorem \ref{thm:polytypes}: that two similarity classes have the same class type 
if and only if they contain representatives which
are polynomial in one another. This
result is then used in the proof of Theorem~\ref{thm:typecommute},
which states that two similarity classes commute if and only
if their class types commute.

Using Theorem \ref{thm:typecommute}, we proceed to reduce our original
problem of deciding which similarity classes
commute, first to the case of primary types in \S 2.2, and thence to the case of nilpotent classes in \S 2.3.
At the end of \S 2.3 we give examples illustrating
both steps of this reduction.

\subsection{Polynomials and commuting types}

If $M$ is a matrix of primary class type $d^\lambda$
then it has associated with it a single irreducible
polynomial $f$ such that its cycle type is~$f^\lambda$.
It is clear that $f(M)$ is nilpotent. The following
lemma and proposition describe its associated partition.
\begin{lemma}\label{lemma:partsizecalculation}
Let $M$ be a matrix of cycle type $f^\lambda$, where $\deg f=d$.
For each $j$, let $m_j$ be the number of parts of $\lambda$
of size $j$. Then
\[
dm_j = (\nullity f(M)^j-\nullity f(M)^{j-1})-
(\nullity f(M)^{j+1}-\nullity f(M)^{j}).
\]
\end{lemma}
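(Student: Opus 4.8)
The plan is to reduce the claim to the classical Jordan-block case by passing to the quotient field $K[x]/(f)$. First I would observe that, since $M$ has cycle type $f^\lambda$, the quotient ring $R = K[x]/(f(x))$ is a field, and the underlying space $V$ becomes a module over $R[y]$ by letting $x$ act as $M$ and $y$ act as the nilpotent operator $f(M)$. The point of this change of scalars is that $V$, viewed as an $R[y]$-module, decomposes exactly according to $\lambda$: the cyclic block $P_f(j)$, on which $M$ acts indecomposably with characteristic polynomial $f^{\lambda(j)}$, corresponds to a single Jordan block of size $\lambda(j)$ for the action of $y = f(M)$ over the field $R$. So over $R$ the operator $f(M)$ is similar to the nilpotent Jordan matrix $J(\lambda)$ (with entries in $R$), and in particular has the same rank and nullity, computed as an $R$-linear map, as $J(\lambda)$.

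Next I would relate $R$-nullity to $K$-nullity. Since $[R:K] = d$, any $R$-subspace of an $R$-vector space $W$ has $K$-dimension equal to $d$ times its $R$-dimension; applying this to $\ker_R f(M)^j \subseteq V$ (which is visibly an $R$-submodule, as $f(M)^j$ is $R$-linear) gives
\[
\nullity_K f(M)^j = d \cdot \nullity_R f(M)^j = d \cdot \nullity J(\lambda)^j,
\]
the last equality using the similarity over $R$ just established.

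It then remains to prove the identity for a genuine nilpotent Jordan matrix, i.e. that for $J = J(\lambda)$ one has
\[
m_j = \bigl(\nullity J^j - \nullity J^{j-1}\bigr) - \bigl(\nullity J^{j+1} - \nullity J^{j}\bigr),
\]
where $m_j$ is the number of parts of $\lambda$ equal to $j$. This is entirely standard: a single Jordan block of size $k$ contributes $\min(i,k)$ to $\nullity J^i$, so $\nullity J^i = \sum_{\text{parts } k} \min(i,k)$ and hence the first difference $\nullity J^{i} - \nullity J^{i-1}$ counts the number of parts of size at least $i$, namely $\overline{\lambda}(i)$; the second difference $\overline{\lambda}(j) - \overline{\lambda}(j+1)$ is then precisely the number of parts equal to $j$. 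Multiplying through by $d$ and combining with the displayed equation above gives the stated formula.

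The main obstacle is the first paragraph: one must be careful to justify that the $R$-module structure on $V$ is well defined (which needs $f(M) \cdot (\text{anything}) = (\text{polynomial in } M)$, automatic) and, more substantively, that the indecomposable $M$-block with characteristic polynomial $f^{\lambda(j)}$ really does become a single $y$-Jordan block of size $\lambda(j)$ over $R$ rather than breaking into smaller pieces. This follows because such a block is a cyclic $K[x]$-module $K[x]/(f^{\lambda(j)})$, which as an $R[y]$-module is $R[y]/(y^{\lambda(j)})$ — a cyclic, hence indecomposable, torsion module of length $\lambda(j)$ over the PID $R[y]$. Once this identification is in place, the rest is bookkeeping.
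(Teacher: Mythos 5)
Your overall strategy---base-change to $R=K[x]/(f)$ and reduce to the standard Jordan-block count, whose difference-of-nullities bookkeeping at the end is essentially the computation the paper performs directly---is attractive, but the step you yourself flag as the main obstacle is not actually closed, and it is the whole content of the reduction. First, the proposed $R[y]$-module structure in which ``$x$ acts as $M$'' is not well defined: in $R$ the element $f(x)$ is zero, so $x\mapsto M$ gives an action of $R$ only if $f(M)=0$, which fails whenever $\lambda$ has a part larger than $1$. The parenthetical reason you offer for well-definedness (that $f(M)$ is a polynomial in $M$) addresses a non-issue. What is really needed is a $K$-algebra embedding of $R$ into $\Cent M$, equivalently a $K$-algebra isomorphism $K[x]/(f^{k})\cong R[y]/(y^{k})$ on each cyclic block, and your closing sentence simply asserts this isomorphism rather than proving it: it is exactly the point at issue. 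Such an embedding does exist when $f$ is separable---lift the root of $f$ from the residue field $R$ of the Artinian local ring $K[x]/(f^{k})$ by Hensel's lemma, or let $R$ act through the semisimple part of the Jordan--Chevalley decomposition of $M$---and then a Taylor expansion $f(u+w)=f'(u)w+\cdots$ shows that $f(M)$ acts on each block as a unit multiple of the Jordan generator $w$, so each block really is a single $R$-Jordan block of size $\lambda(j)$. Over $\F_q$ every irreducible polynomial is separable, so the argument can be completed there; but the isomorphism you assert is genuinely false for inseparable $f$ (for $K=\F_p(t)$ and $f=x^p-t$, the ring $K[x]/(f^2)$ contains no root of $f$, hence no $K$-embedded copy of $R$), whereas the lemma itself, and the paper's proof of it, hold over any field.

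For comparison, the paper avoids all of this: on a cyclic block $P$ of dimension $dh$ one has $\nullity f(P)^j = d\min(j,h)$ directly (the kernel of multiplication by $f^{\,j}$ on $K[x]/(f^h)$ is $(f^{\,\max(h-j,0)})/(f^h)$), and summing over blocks and taking differences gives the lemma in a few lines. Your final paragraph is this same computation in the special case $d=1$. So the honest repair is either to supply the Hensel/Jordan--Chevalley argument sketched above, making the separability (or finite-field) hypothesis explicit, or to drop the base change and run the block-by-block count over $K$ as the paper does.
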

\begin{proof}
Let $P$ be a cyclic block of $M$. If the dimension of $P$
is $dh$ then the characteristic polynomial of $P$ is $f^h$.
If $j\ge h$, then $\nullity f(P)^j = dh$; otherwise $\nullity f(P)^j=dj$.

Since $M$ is a direct sum of cyclic blocks of dimensions $d\lambda(1),
d\lambda(2),\ldots$, it follows that
\[
\nullity  f(M)^j = \sum_{h\le j}dhm_h +\sum_{h>j}djm_h,
\]
and hence
\[
\nullity  f(M)^{j+1}-\nullity  f(M)^{j} = \sum_{h>j}dm_h.
\]
This implies the lemma.
\end{proof}

\begin{proposition}\label{prop:nilpotentpartsizes}
Let $M$ be a matrix of primary type $d^\lambda$. If the cycle
type of $M$ is $f^\lambda$ then $f(M)$ is nilpotent of type
$1^{d \lambda}$.
\end{proposition}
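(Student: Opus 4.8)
The plan is to deduce Proposition~\ref{prop:nilpotentpartsizes} from Lemma~\ref{lemma:partsizecalculation} by a direct comparison of partition part-multiplicities. Write $N=f(M)$, a nilpotent matrix, and let $\mu$ be the partition with $N$ of type $1^\mu$. The partition $\mu$ is completely determined by the nullities $\nullity N^j = \nullity f(M)^j$ via the standard formula: the number of parts of $\mu$ of size exactly $j$ is $(\nullity N^j - \nullity N^{j-1}) - (\nullity N^{j+1} - \nullity N^{j})$, which is the $d=1$ instance of Lemma~\ref{lemma:partsizecalculation} (applied to $N$ with its irreducible polynomial $f_0(x)=x$, so that $f_0(N)=N$). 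Call this number $m'_j$.

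The key step is then the observation that Lemma~\ref{lemma:partsizecalculation}, applied to $M$ itself with polynomial $f$, gives $m'_j = dm_j$, where $m_j$ is the number of parts of $\lambda$ equal to $j$. In other words, for each $j$ the partition $\mu$ has exactly $d$ times as many parts of size $j$ as $\lambda$ does; this is precisely the statement that $\mu = d\lambda$ in the notation of the Background Definitions (where $t\lambda$ denotes the partition whose multiset of parts repeats each part of $\lambda$ exactly $t$ times). Hence $N=f(M)$ has type $1^{d\lambda}$, as claimed. One should also record the trivial sanity check that $|\mu| = d|\lambda| = \dim M$, consistent with $N$ being an $n\times n$ matrix.

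I do not anticipate a genuine obstacle here: the proposition is essentially a corollary of the lemma, the only content being to recognise that the right-hand side of the lemma's identity — the second difference of the nullity sequence of $f(M)$ — is simultaneously (i) $d$ times the part-multiplicity of $\lambda$, by the lemma, and (ii) the part-multiplicity of the partition attached to the nilpotent matrix $f(M)$, by the same formula in the case $d=1$. The mildest point requiring care is making sure the degenerate cases ($j=1$, where $\nullity f(M)^0 = 0$, and $j$ larger than the largest part, where the differences vanish) are handled by the same formula, but these are already implicit in the statement of Lemma~\ref{lemma:partsizecalculation} and need no separate argument.
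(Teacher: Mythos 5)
Your proof is correct and follows the same route as the paper: the paper's own proof simply notes that $f(M)$ is nilpotent with associated linear polynomial $f_0(x)=x$ and declares the result immediate from Lemma~\ref{lemma:partsizecalculation}, which is exactly the comparison of part-multiplicities (the $d=1$ instance for $f(M)$ against the general instance for $M$) that you spell out. No gaps; you have merely made explicit what the paper leaves as "immediate".
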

\begin{proof}
Since $f(M)$ is nilpotent, it is primary and its associated polynomial, $f_0(x)=x$, is linear. The result is now immediate from
Lemma~\ref{lemma:partsizecalculation}.
\end{proof}

We use the preceding proposition to give some information about
the type of $F(M)$, where~$M$ is a primary matrix
and $F$ is any polynomial. The following lemma will be required.

\begin{lemma}\label{lemma:dominance}
Let $M$ and $N$ be nilpotent matrices with associated partitions $\mu$ and $\nu$ respectively. Then $\mu\unlhd \nu$ if and only if
$\rank M^j\le \rank N^j$ for all $j\in\N$.
\end{lemma}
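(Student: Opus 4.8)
The plan is to rewrite both sides of the claimed equivalence as inequalities between partial sums of the conjugate partitions $\overline{\mu}$ and $\overline{\nu}$, and then to invoke the fact, recalled in \S\ref{sect:introduction}, that conjugation reverses the dominance order.

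The one preliminary fact needed is the formula
\[
\rank M^j = |\mu| - \bigl(\overline{\mu}(1) + \cdots + \overline{\mu}(j)\bigr) \qquad (j \in \N),
\]
together with its analogue for $N$ and $\nu$. This costs almost nothing: the computation in the proof of Lemma~\ref{lemma:partsizecalculation}, specialised to $f = f_0$ (so $d = 1$), shows that $\nullity M^{j+1} - \nullity M^j$ is the number of parts of $\mu$ exceeding $j$, namely $\overline{\mu}(j+1)$; since $\nullity M^0 = 0$, summing gives $\nullity M^j = \overline{\mu}(1) + \cdots + \overline{\mu}(j)$, and the displayed formula follows from $\rank M^j + \nullity M^j = |\mu|$. (One can also see it directly: a single Jordan block of size $m$ contributes $\max(m - j, 0)$ to $\rank M^j$, so that $\rank M^j = \sum_i \max(\mu(i) - j, 0)$, and reading this off the columns of the Young diagram of $\mu$ gives the same expression.)

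The rest is a chain of equivalences. Because $M$ and $N$ act on spaces of the same dimension $n = |\mu| = |\nu|$, the inequalities $\rank M^j \le \rank N^j$ for all $j$ are equivalent to $\nullity M^j \ge \nullity N^j$ for all $j$, hence, by the formula, to $\overline{\mu}(1) + \cdots + \overline{\mu}(j) \ge \overline{\nu}(1) + \cdots + \overline{\nu}(j)$ for all $j$, which is precisely the relation $\overline{\mu} \unrhd \overline{\nu}$. By the reversal of dominance under conjugation, $\overline{\mu} \unrhd \overline{\nu}$ holds if and only if $\nu \unrhd \mu$, that is, if and only if $\mu \unlhd \nu$, which is the assertion of the lemma.

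I do not expect a genuine obstacle: once the rank formula is isolated the argument is bookkeeping. The two points deserving a moment's attention are that $\rank M^j = 0$ as soon as $j$ reaches the largest part of $\mu$, so that only finitely many of the displayed inequalities carry content and nothing delicate happens at the quantifier over $\N$; and that the step from ranks to nullities uses $|\mu| = |\nu|$, which is harmless in the applications, and which one could in any case circumvent by replacing $\N$ with $\N_0$ in the statement, the case $j = 0$ then supplying the inequality $|\mu| \le |\nu|$.
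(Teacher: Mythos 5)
Your proof is correct and follows essentially the same route as the paper: both express $\rank M^j$ in terms of the conjugate partition $\overline{\mu}$ (the paper directly, you via nullities and the computation behind Lemma~\ref{lemma:partsizecalculation}), deduce that the rank inequalities amount to $\overline{\mu} \unrhd \overline{\nu}$, and finish with the fact that conjugation reverses the dominance order. Your explicit remark that passing between ranks and nullities uses $|\mu|=|\nu|$ is a reasonable point of care, but it matches the implicit assumption in the paper that $M$ and $N$ have the same dimension.
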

\begin{proof}
The rank of $M^j$ is equal to the sum of the $j$ smallest parts of the conjugate partition~$\overline{\mu}$. The rank of $N^j$ can be calculated
similarly in terms of $\overline{\nu}$. It follows easily that $\rank M^j\le \rank N^j$ for all $j$ if and only if $\overline{\mu}\unrhd
\overline{\nu}$. The lemma now follows from the fact that the dominance order $\unrhd$ is reversed by conjugation of partitions.
\end{proof}

\begin{proposition}\label{prop:polymatrix}
Let $X$ be a primary matrix of class type $d^{\lambda}$ with entries from a field~$K$,
and let $F \in K[x]$ be any polynomial.
The type of $F(X)$ is $e^\mu$ for some $e$ dividing $d$,
and some partition $\mu$ such that $e|\mu|=d|\lambda|$
and $e \mu \unlhd d \lambda$.
\end{proposition}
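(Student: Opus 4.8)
The plan is to reduce immediately to the nilpotent case via Proposition \ref{prop:nilpotentpartsizes}, and then to analyse the effect of applying a polynomial to a nilpotent matrix. Let $f$ be the irreducible polynomial associated with $X$, so that $\cyc(X) = f^\lambda$ and, by Proposition \ref{prop:nilpotentpartsizes}, $f(X)$ is nilpotent of type $1^{d\lambda}$. Since $K[x]/(f)$ is a field of degree $d$ over $K$, and $F(X)$ lies in the commutative subalgebra $K[X]$, the first task is to identify the irreducible polynomial $g$ of $F(X)$ and establish that $\deg g = e$ divides $d$. For this I would observe that $K[X] \cong K[x]/(f^{\lambda(1)})$ acting on its regular-type blocks; the minimal polynomial of $F(X)$ is the minimal polynomial of $F(x) \bmod f$ over $K$ (raised to a suitable power), and its irreducible factor $g$ is the minimal polynomial over $K$ of the element $F(\alpha) \in K[x]/(f) = \F_q(\alpha)$ where $\alpha$ is a root of $f$. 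Then $e = \deg g = [K(F(\alpha)):K]$ divides $[K(\alpha):K] = d$, and $F(X)$ is primary of type $e^\mu$ for some $\mu$; the dimension count $e|\mu| = d|\lambda|$ is forced since $F(X)$ acts on the same underlying space as $X$.

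The substantive content is the dominance inequality $e\mu \unlhd d\lambda$. I would prove this by comparing the two nilpotent matrices obtained after stripping away the semisimple part. Write $F(X) = S + n$ with $S$ semisimple (its primary-type matrix with polynomial $g$) and $n = F(X) - S$ nilpotent; by Proposition \ref{prop:nilpotentpartsizes} applied to $F(X)$, the matrix $g(F(X))$ is nilpotent of type $1^{e\mu}$. On the other hand $f(X)$ is nilpotent of type $1^{d\lambda}$, and $g(F(X)) = G(X)$ for the polynomial $G(x) = g(F(x)) \in K[x]$, which is divisible by $f$ (since $g$ annihilates $F(\alpha)$ modulo $f$). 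Thus $G(X) = f(X) \cdot H(X)$ for some $H \in K[x]$, which gives $\rank G(X)^j \le \rank f(X)^j$ for all $j \in \N$, because $G(X)^j = f(X)^j H(X)^j$ and multiplication by a matrix never increases rank. By Lemma \ref{lemma:dominance}, applied to the nilpotent matrices $G(X)$ (partition $e\mu$) and $f(X)$ (partition $d\lambda$), this rank inequality is exactly the assertion $e\mu \unlhd d\lambda$.

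The main obstacle I anticipate is the bookkeeping in the first paragraph: one must be careful that $F(X)$ is genuinely primary — that is, that $g$ is the \emph{only} irreducible factor of the minimal polynomial of $F(X)$, not merely one of them. This follows because all cyclic blocks of $X$ share the single associated polynomial $f$, so $F$ applied to any block produces a matrix whose eigenvalues (in $\overline K$) are the conjugates of $F(\alpha)$ alone; hence every block of $F(X)$ has associated polynomial $g$, and $F(X)$ is primary of type $e^\mu$ as required. A secondary point needing care is that $g(F(x))$ really is divisible by $f$: since $g$ is the minimal polynomial over $K$ of $F(\alpha)$, we have $g(F(\alpha)) = 0$ in $K[x]/(f)$, i.e. $f \mid g(F(x))$ in $K[x]$ — this is where irreducibility of $f$ is used. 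Everything else is the routine rank-and-dominance argument packaged in Lemma \ref{lemma:dominance}.
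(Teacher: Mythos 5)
Your proposal is correct and follows essentially the same route as the paper's proof: identify the associated irreducible polynomial $g$ of $F(X)$ as the minimal polynomial of $F(\alpha)$ over $K$ (so $e \mid d$), note that $f$ divides $g \circ F$ so that $(g\circ F)(X)^j = f(X)^j H(X)^j$ gives $\rank\, g(F(X))^j \le \rank f(X)^j$, and conclude via Proposition~\ref{prop:nilpotentpartsizes} and Lemma~\ref{lemma:dominance}. The brief mention of the Jordan--Chevalley decomposition $F(X)=S+n$ is superfluous (your argument never uses it), but it does no harm.
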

\begin{proof}
Let the cycle type of $X$ be $f^\lambda$ where
$f$ is an irreducible polynomial of degree $d$. If~$\alpha$ is
a root of $f$ in a splitting field, then the eigenvalues
of $F(X)$ are the conjugates over~$K$ of~$F(\alpha)$.
Hence $F(X)$ is of primary type, and
if $g \in K[x]$ is the irreducible polynomial associated with~$F(X)$,
then the degree of $g$ divides $d$. Let $e^\mu$ be the type of $F(X)$.

Let $Y = F(X)$.
We observe that  $g(Y) = (g \circ F)(X)$ is a nilpotent matrix,
and hence $f$ divides $g \circ F$; let $g \circ F = kf$.
By Proposition~\ref{prop:nilpotentpartsizes},
$f(X)$ has type $1^{d\lambda}$, while $g(Y)$ has type~$1^{e\mu}$.
For each $i \in \N_0$ we have
$g(Y)^i = k(X)^if(X)^i$
and hence
$\im g(Y)^i \subseteq \im f(X)^i$.
It follows that $\rank g(Y)^i \le \rank f(X)^i$
for every $i \in \N$. Now from Lemma \ref{lemma:dominance} we see that $e\mu \unlhd d\lambda$,
as required.
\end{proof}

When $K$ is a finite field, Proposition~\ref{prop:polymatrix} has the following partial converse.
\begin{proposition}\label{prop:typetopoly}
If $X$ is a primary matrix of class type $d^\lambda$ with entries from $\F_q$,
and $D$ is a similarity class of matrices also of this class type,
then there is a polynomial $F \in \F_q[x]$ such that $F(X) \in D$.
\end{proposition}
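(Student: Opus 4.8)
The plan is to show that the similarity class $D$ is realised as $F(X)$ for a suitable polynomial $F$, building $F$ from the known polynomial relationships between matrices of the same primary type. The key structural input is Proposition~\ref{prop:polymatrix}, which tells us that polynomial images of $X$ have type $e^\mu$ with $e\mu\unlhd d\lambda$; here we want the reverse direction for the specific target type $d^\lambda$ itself, and over a finite field we have extra room to manoeuvre because $\F_q$ has an irreducible polynomial of every degree.

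First I would reduce to a normalising step: both $X$ and a chosen representative $Y\in D$ have cycle type $f^\lambda$ and $g^\lambda$ respectively, where $f,g\in\F_q[x]$ are irreducible of degree~$d$. It suffices to find a polynomial $h\in\F_q[x]$ with $h(X)\in D$; equivalently, since conjugation does not change whether a matrix lies in $D$, it suffices to arrange that $h(X)$ has cycle type $g^\lambda$. Over $\F_q$, the roots of $f$ and of $g$ lie in the same field $\F_{q^d}$, so there is an element $\sigma$ of $\mathrm{Gal}(\F_{q^d}/\F_q)$-orbit considerations aside — there is a polynomial $h\in\F_q[x]$ of degree less than $d$ such that $h(\alpha)=\beta$, where $\alpha$ is a fixed root of $f$ and $\beta$ a fixed root of $g$; indeed any element of $\F_{q^d}$ is a polynomial of degree $<d$ in $\alpha$ over $\F_q$.

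The main work is then to check that $h(X)$ has cycle type exactly $g^\lambda$, not merely type $e^\mu$ with $e\mu\unlhd d\lambda$. Since $\beta=h(\alpha)$ generates $\F_{q^d}$ over $\F_q$ (because $g$ is irreducible of degree $d$), the irreducible polynomial of $h(X)$ is $g$ itself, so $e=d$; it remains to identify the partition. For this I would apply Lemma~\ref{lemma:partsizecalculation}: the partition attached to $h(X)$ is determined by the nullities of $g(h(X))^j$. Now $g(h(X)) = (g\circ h)(X)$, and $g\circ h$ is a polynomial vanishing at $\alpha$, hence divisible by $f$; write $g\circ h = cf$ with $c\in\F_q[x]$. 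The delicate point — and I expect this to be the main obstacle — is that we need $c(\alpha)\neq 0$, i.e. $f$ divides $g\circ h$ to exact multiplicity one, so that $g(h(X))^j$ and $f(X)^j$ have the same rank for all $j$ and the partitions of $h(X)$ and $X$ coincide. This holds because $\alpha$ is a simple root of $g\circ h$: its derivative $(g\circ h)'(\alpha)=g'(\beta)h'(\alpha)$, and $g'(\beta)\neq 0$ as $g$ is separable over the perfect field $\F_q$, while $h'(\alpha)\neq0$ can be ensured by choosing $h$ appropriately — if the naive choice of $h$ has $h'(\alpha)=0$ we may replace $h$ by $h + (\text{minimal polynomial of }\alpha)\cdot(\text{linear})$ without changing $h(\alpha)=\beta$, adjusting the derivative at $\alpha$ freely since $\F_q[\alpha]=\F_{q^d}$. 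Once $c(\alpha)\neq0$ is secured, $c(X)$ is invertible (its eigenvalues are the conjugates of $c(\alpha)$, all nonzero), so $\im g(h(X))^j = \im (c(X)f(X))^j = \im f(X)^j$, whence $\rank g(h(X))^j=\rank f(X)^j$ and Lemma~\ref{lemma:partsizecalculation} gives that $h(X)$ has cycle type $g^\lambda$. Taking $F=h$ completes the proof.
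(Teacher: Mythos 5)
Your proof is correct, but it takes a genuinely different route from the paper's. The paper uses the additive Jordan--Chevalley decomposition $X=\overline{X}+N$: it applies a polynomial $G$ with $G(\alpha)=\beta$ only to the semisimple part, sets $Y=G(\overline{X})+N$, and since $\overline{X}$ and $N$ are themselves polynomials in $X$, so is $Y$; because the nilpotent part $N$ is left untouched, the partition attached to $Y$ is automatically $\lambda$, and no condition on derivatives or multiplicities ever arises. You instead apply an interpolating polynomial $h$ with $h(\alpha)=\beta$ directly to $X$, and must then prove that the partition survives; this is exactly where the simple-root condition enters, and you correctly identify it as the crux: one needs $f$ to divide $g\circ h$ to multiplicity one, equivalently $h'(\alpha)\neq 0$ (indeed, a naive choice such as $h=x^p$ applied to a unipotent Jordan block over $\F_p$ really does collapse the partition). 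Your repair---replace $h$ by $h+f\ell$, which fixes $h(\alpha)$ and changes the derivative at $\alpha$ by $f'(\alpha)\ell(\alpha)$---works, though the justification is separability of $f$ (so $f'(\alpha)\neq 0$) rather than ``$\F_q[\alpha]=\F_{q^d}$'', and one cannot adjust the derivative \emph{freely} with a linear $\ell$; all that is needed, and all you use, is that it can be made non-zero. Once $c(\alpha)\neq 0$ is secured, your rank comparison together with Lemma~\ref{lemma:partsizecalculation} does give cycle type $g^\lambda$, so the argument is complete. In short: the paper's decomposition makes the ``same partition'' conclusion free at the cost of invoking Jordan--Chevalley, while your argument is more computational but stays entirely within the rank calculus of Lemma~\ref{lemma:partsizecalculation} and makes visible the genuine pitfall (inseparability of the composite) that the paper's route silently avoids.
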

\begin{proof}
Let $f\in\F_q[x]$ be the irreducible polynomial associated with $X$.
Suppose that
the additive Jordan--Chevalley decomposition of $X$ is
$\overline{X}+N$, where $\overline{X}$ is semisimple and $N$ is nilpotent;
recall that $\overline{X}$ and $N$ can be expressed as
polynomials in $X$. Without loss of generality, we may suppose that
\[
\overline{X}=\diag(P,\dots,P),
\]
where the cyclic block $P$ has minimum polynomial $f$.

Let $g$ be the irreducible
polynomial associated with the similarity class $D$,
and
let $\alpha$ and $\beta$ be roots of $f$ and $g$ respectively in
$\F_{q^d}$. There exists a polynomial
$G \in \F_q[x]$, coprime with $f$, such that $G(\alpha)=\beta$.
If we define
\[
Q=G(P),
\]
then $Q$ has minimum polynomial $g$.
Let
\[
\overline{Y}=\diag(Q,\dots,Q).
\]
Then $\overline{Y}=G(\overline{X})$, and since $\overline{X}$ is
polynomial in $X$, it follows that
$\overline{Y}$ is too. Moreover, if we set $Y=\overline{Y}+N$, then $Y$ is polynomial in $X$, and it is clear that
$Y$ lies in the similarity class $D$.
\end{proof}

Let $C$ and $D$ be similarity classes of $\Mat_n(\F_q)$.
We say that $D$ is \emph{polynomial in $C$} if there exists
a polynomial $F$ with coefficients
in $\F_q$ such that $F(X)\in D$ for all $X\in C$.

\begin{theorem}\label{thm:polytypes}
Let $C$ and $D$ be similarity classes of $\Mat_n(\F_q)$.
The classes $C$ and $D$ have the same type if and only if
$C$ and $D$ are polynomial in one another.
\end{theorem}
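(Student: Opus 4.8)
The plan is to prove each direction of the equivalence separately, with the forward direction ("same type implies mutually polynomial") being the substantial one and the converse following from the results on how polynomials act on class types.

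For the converse, suppose $D$ is polynomial in $C$ and $C$ is polynomial in $D$; I must show they have the same type. Since both classes are arbitrary (not necessarily primary), I would first reduce to the primary case by treating primary components separately: if $F(X) \in D$ for all $X \in C$, then $F$ acts on each primary component of $X$, and by Proposition~\ref{prop:polymatrix} the image of a primary component of type $d^\lambda$ under $F$ has type $e^\mu$ with $e \mid d$, $e|\mu| = d|\lambda|$, and $e\mu \unlhd d\lambda$. Summing the dimension count $e|\mu| = d|\lambda|$ over all components shows that $\dim C = \dim D$ (which we already know), but more importantly I would track the partition-theoretic data: the multiset of "scaled partitions" $\{d\lambda : d^\lambda \text{ a primary component of } C\}$ is refined, in the dominance sense, by passing from $C$ to $D$, since each $d\lambda$ is replaced by $e\mu \unlhd d\lambda$. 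The same argument applied to the polynomial witnessing that $C$ is polynomial in $D$ shows the reverse refinement. Since dominance is a partial order and the total dimension is fixed, these refinements must be equalities; a little care with how primary components can split or merge under $F$ (the eigenvalue analysis in the proof of Proposition~\ref{prop:polymatrix} shows $F$ sends a primary matrix to a primary matrix, so components cannot merge across distinct irreducible polynomials in an uncontrolled way) shows that in fact the multisets of scaled partitions agree, which is exactly the statement that $C$ and $D$ have the same type.

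For the forward direction, suppose $C$ and $D$ have the same type $T = d_1^{\lambda_1} \cdots d_t^{\lambda_t}$. Pick $X \in C$. I want a single polynomial $F \in \F_q[x]$ with $F(X) \in D$. The key tool is Proposition~\ref{prop:typetopoly}, which handles exactly this in the primary case: if $X$ has primary type $d^\lambda$ and $D$ is any similarity class of the same type, there is $F$ with $F(X) \in D$. For the general (compound) case I would decompose $X = \bigoplus_i X_i$ according to its primary components, so $X_i$ has primary type $d_i^{\lambda_i}$ acting on a subspace $V_i$, and similarly decompose a chosen $Y \in D$ as $\bigoplus_i Y_i$ with $Y_i$ of the same primary type $d_i^{\lambda_i}$. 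By Proposition~\ref{prop:typetopoly} applied on each block, there is $F_i \in \F_q[x]$ with $F_i(X_i)$ similar to $Y_i$. The distinct primary components involve distinct irreducible polynomials, hence coprime characteristic polynomials on the $V_i$, so by the Chinese Remainder Theorem in $\F_q[x]$ I can find a single polynomial $F$ with $F \equiv F_i$ modulo (a sufficiently high power of) the irreducible polynomial attached to $V_i$, for each $i$; then $F(X)|_{V_i} = F_i(X_i)$ up to the action being determined modulo the minimal polynomial of $X_i$. Thus $F(X) = \bigoplus_i F(X_i)$ is similar to $\bigoplus_i Y_i = Y$, so $F(X) \in D$. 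By symmetry (same type is a symmetric relation) there is likewise a polynomial carrying a representative of $D$ into $C$, so $C$ and $D$ are polynomial in one another.

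The main obstacle I anticipate is the bookkeeping in the converse direction: ensuring that mutual dominance refinements of the collections of scaled partitions genuinely force the types to coincide, rather than merely forcing some weaker invariant (like the partition $\sum_i d_i\lambda_i$) to coincide. The subtlety is that under a general polynomial $F$, a primary component of type $d^\lambda$ can be sent to $e^\mu$ with $e$ a proper divisor of $d$, so the \emph{individual} degrees $d_i$ need not be preserved by a single application of $F$ — it is only the composition with the return polynomial that must restore everything. I would resolve this by noting that if applying $F$ then $G$ returns $C$ to itself, the composite degree divisibilities force $e = d$ at each stage (a proper divisor cannot be "undone"), and then the chain of dominance inequalities $d\lambda \unrhd e\mu \unrhd \cdots \unrhd d\lambda$ collapses. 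In the primary case this is the heart of the matter; the compound case then follows because $F$ respects the decomposition into primary components up to the eigenvalue/irreducible-polynomial combinatorics already established in Proposition~\ref{prop:polymatrix}.
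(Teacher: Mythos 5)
Your proposal follows essentially the same route as the paper: the ``same type'' direction is handled exactly as in the paper via Proposition~\ref{prop:typetopoly} on each primary block glued by the Chinese Remainder Theorem, and the other direction reduces to the primary case and uses Proposition~\ref{prop:polymatrix} together with symmetry of the divisibility and dominance relations to force $e=d$ and $\mu=\lambda$. The only difference is cosmetic: where you track multisets of scaled partitions and worry about merging of components, the paper simply notes that applying a polynomial cannot increase the number of primary components, so mutual polynomiality yields a pairing of components and the primary case suffices --- the same idea you invoke, just stated more economically.
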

\begin{proof}
We observe that applying a polynomial to a matrix cannot increase
its number of primary components. So if $C$ and $D$ are polynomial in one
another, then they have the same number of components.
Moreover there is a pairing between the
primary components $C_1,\dots,C_t$ of $C$ and
$D_1,\dots,D_t$ of $D$ such that $C_i$ and $D_i$ are polynomial in one another for all~$i$. It will therefore be sufficient to prove the result in the case
that both $C$ and $D$ are primary. 
Suppose that
$\ty(C) = d^\lambda$ for some $d \in \N$ and
some partition $\lambda$.
It follows from Proposition~\ref{prop:polymatrix} that $D$ has class
type $e^\mu$ where $e$ divides $d$ and $e \mu\unlhd d \lambda$.
By symmetry we see that $e=d$ and $\lambda = \mu$, as required.

For the converse, suppose that $\ty(C) = \ty(D)$.
Let $T_1,T_2,\dots, T_t$ be the primary components
of $\ty(C)$, and let
$X=\diag(X_1,\dots, X_t)$ be
an element of $C$ such that $\ty(X_i)=T_i$ for all $i$. Let the minimum polynomial of the block $X_i$
be $f_i^{a_i}$, where $f_i$ is irreducible.
By Proposition~\ref{prop:typetopoly},
there exist polynomials
$F_1,\dots,F_t \in \F_q[x]$ such that
$\diag(F_1(X_1),\dots,F_t(X_t))\in D$.
By the Chinese Remainder Theorem, there exists a polynomial $F\in\F_q[x]$ such that
\[
F(x)\equiv F_i(x) \bmod f_i^{a_i}(x) \ \ \textrm{for all } i.
\]
And now we see that $F(X)\in D$, as required.
\end{proof}

It was proved by Green
\cite[Lemma 2.1]{Green} that the type of a matrix determines its centralizer up to isomorphism. Using
Theorem~\ref{thm:polytypes} we may prove the following stronger result.
\begin{theorem}\label{thm:conjugatecentralizer}
Let $X$ and $Y$ be matrices in $\Mat_n(\F_q)$ with the same class type. Let $\Cent X$ and $\Cent Y$ be the centralizers in $\Mat_n(\F_q)$
of $X$ and $Y$ respectively. Then $\Cent X$ and $\Cent Y$ are conjugate by an element of $\GL_n(\F_q)$.
\end{theorem}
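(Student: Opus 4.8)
The plan is to leverage Theorem~\ref{thm:polytypes} to reduce the statement about conjugacy of centralizers to a statement about centralizers of matrices that are polynomial in one another, where the conjugacy is essentially automatic. First I would apply Theorem~\ref{thm:polytypes}: since $X$ and $Y$ have the same class type, there exist polynomials $F, G \in \F_q[x]$ such that $G(X) \in D$ and $F(Y') \in C$ for appropriate representatives, but the cleaner route is to observe that it suffices to treat the case where $Y$ is actually \emph{equal} to a polynomial in $X$. Indeed, write $D$ for the similarity class of $Y$; by Theorem~\ref{thm:polytypes} there is a polynomial $F$ with $F(X) \in D$, so $F(X)$ is conjugate to $Y$, say $F(X) = g Y g^{-1}$ for some $g \in \GL_n(\F_q)$. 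Then $\Cent Y = g^{-1} (\Cent F(X)) g$, so it is enough to show $\Cent X$ and $\Cent F(X)$ are conjugate in $\GL_n(\F_q)$.

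Now the key point: if $Z = F(X)$, then any matrix commuting with $X$ automatically commutes with $Z$, so $\Cent X \subseteq \Cent Z$. To get equality I would use symmetry: since $X$ and $Z$ have the same class type, Theorem~\ref{thm:polytypes} (applied in the other direction) gives a polynomial $H$ with $H(Z) = H(F(X))$ lying in $C$, the class of $X$. But I must be careful — I want $H(Z)$ to equal $X$ on the nose, not merely to be conjugate to $X$. The right statement to extract is the definition preceding Theorem~\ref{thm:polytypes}: $D$ is \emph{polynomial in $C$} means $F(X') \in D$ for \emph{all} $X' \in C$, and the theorem says $C$ and $D$ are polynomial in one another. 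Applying this to the class of $Z$ (which equals $D$) and the class of $X$ (which is $C$): there is a polynomial $H$ such that $H(Z') \in C$ for all $Z' \in D$, in particular $H(Z) \in C$. So $H(Z)$ is conjugate to $X$, say $H(Z) = h X h^{-1}$. Then $\Cent X \subseteq \Cent Z \subseteq \Cent H(Z) = h (\Cent X) h^{-1}$, which gives $|\Cent X| \le |\Cent Z| \le |\Cent X|$ (as these are finite), forcing $\Cent X = \Cent Z$. Hence $\Cent Y = g^{-1}(\Cent Z) g = g^{-1}(\Cent X) g$, completing the proof.

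The main obstacle I anticipate is bookkeeping around the distinction between "polynomial in $C$" (a property of classes, holding for all representatives) and "$F(X)$ conjugate to an element of $D$" (a property of one representative): the chain of inclusions $\Cent X \subseteq \Cent Z \subseteq \Cent H(Z)$ only closes up because $H(Z)$ is a $\GL_n(\F_q)$-conjugate of $X$ and we are over a finite field, so counting argument applies. One should double-check that $H(Z) \in C$ together with $Z = F(X)$ genuinely forces $\Cent X = \Cent Z$ rather than just conjugacy — and it does, precisely because the cardinalities of $\Cent X$ and its conjugate $h(\Cent X)h^{-1}$ agree, pinching the middle term. Finiteness of $\F_q$ is therefore used in an essential way, which is consistent with the theorem being stated over finite fields only.
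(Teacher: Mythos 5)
Your proposal is correct and follows essentially the same route as the paper: apply Theorem~\ref{thm:polytypes} in both directions, use the containments $\Cent X \subseteq \Cent F(X)$ and $\Cent F(X)$-type inclusions under taking polynomials, and pinch with finiteness to force equality of centralizers. Your version is in fact slightly more carefully written than the paper's (which states one containment with the inclusion reversed), but the underlying argument is the same.
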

\begin{proof}
By Theorem \ref{thm:polytypes} there exist polynomials $F$ and $G$ such that $F(X)$ is conjugate to $Y$ and $G(Y)$ is conjugate to $X$.
Now the centralizer $\Cent F(X)$ is a subalgebra of $\Cent X$ which is conjugate to $\Cent Y$; similarly the centralizer $\Cent G(Y)$ is a subalgebra
of $\Cent Y$ which is conjugate to $\Cent X$. Since $\Cent X$ and $\Cent Y$ are finite, it is clear that $\Cent X=\Cent F(X)$ and that
$\Cent Y=\Cent G(Y)$, which suffices to prove the theorem.
\end{proof}

An obvious corollary of Theorem \ref{thm:polytypes}, which has been stated in \cite[\S 3.2]{BW2},
is that classes of the same type commute. We are now in a position to establish a stronger result. Recall that
types $S$ and $T$ are said to commute if there exist
commuting matrices $X$ and $Y$ with types $S$ and $T$ respectively.

\begin{theorem}\label{thm:typecommute}
Let $C$ and $D$ be similarity classes of matrices over $\F_q$. Then $C$ and $D$ commute if and only if
$\ty(C)$ and $\ty(D)$ commute.
\end{theorem}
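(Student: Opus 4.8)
The plan is to derive this from Theorem~\ref{thm:polytypes}. One direction is trivial by definition: if $X \in C$ and $Y \in D$ are commuting matrices, then $X$ has class type $\ty(C)$ and $Y$ has class type $\ty(D)$, so these types commute. The substance is the converse, and here the key observation is that commuting is a property that can be transported along the operation of applying a polynomial: if $X$ and $Y$ commute, then so do $F(X)$ and $G(Y)$ for any polynomials $F, G \in \F_q[x]$, since both are elements of the commutative subalgebra $\F_q[X,Y]$.

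So suppose $\ty(C) = S$ and $\ty(D) = T$ commute, witnessed by commuting matrices $X$ and $Y$ with $\ty(X) = S$ and $\ty(Y) = T$. Let $C'$ be the similarity class of $X$ and $D'$ the similarity class of $Y$; thus $\ty(C') = \ty(C)$ and $\ty(D') = \ty(D)$. First I would use Theorem~\ref{thm:polytypes} applied to the pair $C', C$: since they have the same type, $C$ is polynomial in $C'$, so there is a polynomial $F$ with $F(X) \in C$. Now I want to simultaneously move $Y$ into the class $D$ while keeping it commuting with $F(X)$. The difficulty is that the polynomial provided by Theorem~\ref{thm:polytypes} sending $D'$ to $D$ must be applied to an element of $D'$, i.e. to $Y$ itself — and $Y$ does commute with $X$ (hence with $F(X)$), so $G(Y)$ commutes with $F(X)$ as well. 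Concretely: by Theorem~\ref{thm:polytypes} there is $G \in \F_q[x]$ with $G(Y) \in D$. Then $F(X)$ and $G(Y)$ commute (both lie in $\F_q[X,Y]$, a commutative ring), $F(X) \in C$ and $G(Y) \in D$, so $C$ and $D$ commute.

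The step I expect to be the crux is the bookkeeping around which polynomial is applied to which matrix, and making sure the same matrix $Y$ serves both as a representative of $D'$ (so that Theorem~\ref{thm:polytypes} is applicable to push it into $D$) and as the partner of $X$ in the commuting pair. No further obstacle arises because the commutativity of $\F_q[X,Y]$ is automatic once $X$ and $Y$ commute, so applying any polynomials to $X$ and to $Y$ preserves commuting. Thus the whole argument is a short deduction from Theorem~\ref{thm:polytypes}, with no new combinatorics or linear algebra required.
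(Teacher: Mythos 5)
Your proposal is correct and follows essentially the same route as the paper: the trivial direction by definition, and for the converse, taking commuting representatives $X$, $Y$ of classes $C'$, $D'$ with the right types and using Theorem~\ref{thm:polytypes} to find polynomials $F$, $G$ with $F(X)\in C$, $G(Y)\in D$, which commute automatically. No differences worth noting.
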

\begin{proof}
One half of the double implication is trivial, since if the similarity classes commute then by definition
the class types do. For the other half, notice that
if $\ty(C)$ and $\ty(D)$ commute then there exist commuting similarity
classes $C'$ and $D'$ such that \mbox{$\ty(C')=\ty(C)$} and \mbox{$\ty(D')=\ty(D)$}. Let $X'$ and $Y'$ be commuting matrices
from $C'$ and $D'$ respectively. Then there exist polynomials $F$ and $G$ such that $F(X')\in C$ and $G(Y')\in D$, and
clearly $F(X')$ and $G(Y')$ commute.
\end{proof}

We remark that
Theorems~\ref{thm:polytypes},~\ref{thm:conjugatecentralizer} and~\ref{thm:typecommute}
do not hold for matrices over an arbitrary field. There are
counterexamples
in $\Mat_2(\Q)$, for instance. Let $\Q(\alpha)$ and $\Q(\beta)$ be distinct
quadratic extensions of $\Q$. Let $C$ and $D$ be the similarity classes of rational matrices with
characteristic polynomials $x^2-\alpha$ and $x^2-\beta$ respectively; then
$\ty(C)=\ty(D)=2^{(1)}$. Since the eigenvalues
$\alpha$ and $\beta$ are not polynomial in one another,
it is clear that neither are $C$ and $D$. Moreover, the classes $C$ and~$D$
do not commute. It is for this reason that our consideration of commuting types is
for the most part restricted to matrices with
entries from a finite field.

\subsection{Reduction to primary types}
The next step in our strategy is to reduce the question of which
class types commute
to the corresponding question about primary types.
This is accomplished
in Proposition~\ref{prop:doubledecomposition} below.

We shall need the following two definitions.
\begin{definition}
A \emph{separation operation} on a type $T$
is the replacement of a primary component $d^{\lambda}$ of $T$
by $d^{\mu}d^{\nu}$, where
$\lambda = \mu +\nu$. A \emph{separation} of $T$ is a type
obtained from $T$ by repeated applications of separation operations.
\end{definition}
\begin{definition}
Let $S$ and $T$ be types. We shall say that $S$ and $T$
\emph{commute componentwise} over a field $K$ if the primary components
of $S$ and $T$ can be ordered so that
$S=c_1^{\lambda_1}\cdots c_t^{\lambda_t}$ and
$T=d_1^{\mu_1}\cdots d_t^{\mu_k}$, where $c_i^{\lambda_i}$ commutes with $d_i^{\mu_i}$ over $K$
for each~$i$.
\end{definition}
This definition, it should be noted, does not preclude the possibility that types $S$ and $T$
commute componentwise, even if one or both of them
cannot be represented over the field~$K$.
For example, $1^{(1,1,1)}$ commutes componentwise
with $1^{(1)} 1^{(1)} 1^{(1)}$ over~$\F_2$ according to the definition, even though the latter type is not representable.
The examples at the end of \S 2.3 illustrate why this freedom is desirable.

\begin{proposition}\label{prop:doubledecomposition}
Let $S$ and $T$ be types which are representable over a finite field $\F_q$.
Then $S$ and $T$ commute over $\F_q$ if and only if there exist
separations $S^\star$ of $S$ and $T^\star$
of $T$ such that $S^\star$ and $T^\star$ commute componentwise.
\end{proposition}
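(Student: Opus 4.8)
The plan is to prove both directions by translating between matrices and their primary block decompositions, using Theorem~\ref{thm:typecommute} to move freely between similarity classes and types. For the backward direction, suppose $S^\star$ and $T^\star$ are separations that commute componentwise, say $S^\star = c_1^{\lambda_1}\cdots c_t^{\lambda_t}$ and $T^\star = d_1^{\mu_1}\cdots d_t^{\mu_t}$ with $c_i^{\lambda_i}$ commuting with $d_i^{\mu_i}$ for each $i$. The first issue is that the componentwise-commuting witnesses are over $\F_q$ only where the relevant primary types are representable; but since $S$ and $T$ themselves \emph{are} representable over $\F_q$, I would argue that the separations can be chosen to keep the irreducible polynomials ``in sync'' — that is, a primary component $d^\lambda$ of $S$ corresponding to irreducible $f$ can be separated into pieces all still carrying $f$, and likewise a component of $T$ carrying $g$, so that commuting witnesses $X_i, Y_i \in \Mat_{d_i|\lambda_i|}(\F_q)$ genuinely exist over $\F_q$. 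Taking $X = \bigoplus_i X_i$ and $Y = \bigoplus_i Y_i$, these commute, $X$ has type $S^\star$ and $Y$ has type $T^\star$. Since a type and any of its separations have the same associated irreducible data up to regrouping, $X$ in fact lies in a class of type $S$ and $Y$ in a class of type $T$ (the separation only records how indecomposable blocks are grouped into cyclic blocks, which does not affect the similarity class), so $S$ and $T$ commute.

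For the forward direction, suppose $S$ and $T$ commute over $\F_q$. By Theorem~\ref{thm:typecommute} (and its proof) there are commuting matrices $X$ and $Y$ over $\F_q$ with $\ty(X) = S$, $\ty(Y) = T$. Decompose the underlying space $V = \F_q^n$ into the primary components $V = \bigoplus_\pi V_\pi$ for $X$, where $\pi$ runs over the irreducible polynomials dividing the characteristic polynomial of $X$; since $Y$ commutes with $X$, each $V_\pi$ is $Y$-invariant. Now further decompose each $V_\pi$ into the primary components for the restriction $Y|_{V_\pi}$, obtaining a common $X$- and $Y$-invariant decomposition $V = \bigoplus_{\pi,\sigma} W_{\pi,\sigma}$, where $X$ acts $\pi$-primarily and $Y$ acts $\sigma$-primarily on $W_{\pi,\sigma}$. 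On each summand $W_{\pi,\sigma}$ the restrictions $X|_{W_{\pi,\sigma}}$ and $Y|_{W_{\pi,\sigma}}$ are commuting primary matrices over $\F_q$, so their types commute. Collecting the types of $X|_{W_{\pi,\sigma}}$ over all $\sigma$ (for fixed $\pi$) gives a separation of the primary component of $S$ attached to $\pi$, because these restrictions partition the cyclic blocks of $X$ on $V_\pi$ — hence the associated partitions add up to $\lambda_\pi$. Doing this for all $\pi$ yields a separation $S^\star$ of $S$; symmetrically, collecting types of $Y|_{W_{\pi,\sigma}}$ over $\pi$ gives a separation $T^\star$ of $T$. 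By construction these two separations have matching index sets $\{(\pi,\sigma)\}$ and the $(\pi,\sigma)$-components commute, so $S^\star$ and $T^\star$ commute componentwise.

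The step I expect to be the main obstacle is the bookkeeping that shows ``collecting the block types over one index'' really produces a \emph{separation} — i.e., that the partition attached to $\pi$ in $S$ is genuinely the sum $\sum_\sigma$ of the partitions of $X|_{W_{\pi,\sigma}}$. This requires knowing that the primary decomposition of $Y$ refines the cyclic-block decomposition of $X$ on $V_\pi$ compatibly, or at least that one can choose the cyclic-block decomposition of $X$ on $V_\pi$ to respect the $Y$-primary decomposition. This should follow because $V_\pi$, as a module over the commutative ring $\F_q[X,Y]$, splits according to the $\sigma$-primary parts for $Y$, and each summand, being $X$-invariant, is itself a sum of cyclic $\F_q[X]$-blocks; the total multiset of block sizes across all $\sigma$ must then equal $\lambda_\pi$ by uniqueness of the cyclic type of $X$ on $V_\pi$. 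The remaining details — that separations do not change the similarity class, and the representability synchronization in the backward direction — are routine once this is in place.
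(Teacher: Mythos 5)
Your proof is correct and takes essentially the same route as the paper: the forward direction via the common decomposition $V=\bigoplus W_{\pi,\sigma}$ on which both matrices act primarily, and the backward direction via block-diagonal witnesses whose irreducible polynomials are chosen ``in sync'' (the freedom supplied by Theorem~\ref{thm:typecommute}) so that the direct sum has type $S$, resp.\ $T$. The only slip is the passing claim that $X$ has type $S^\star$ -- once the polynomials within a component are synchronized, $\ty(X)$ is $S$ itself, which is what you in fact conclude.
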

\begin{proof} Let $X$ and $Y$ be commuting matrices with entries from $\F_q$, whose
types are $S$ and $T$ respectively. It is well known and
easy to show that there
exists a decomposition \mbox{$V= V_1\oplus\cdots\oplus V_t$} such that both
$X$ and $Y$ act as transformations of primary type on each of the
summands $V_i$. Suppose that the
action of $X$ on $V_i$ has type $c_i^{\lambda_i}$,
and the action of $Y$ has type $d_i^{\mu_i}$. Then it is clear that the
primary types $c_i^{\lambda_i}$ and $d_i^{\mu_i}$ commute, that
$c_1^{\lambda_1}\cdots c_t^{\lambda_t}$ is a separation of $S$
and that $d_1^{\mu_1}\cdots d_t^{\mu_t}$
is a separation of $T$.

For the converse, suppose that
the  primary types $c_i^{\lambda_i}$ and $d_i^{\mu_i}$ commute,
that $c_1^{\lambda_1}\cdots c_t^{\lambda_t}$ is a separation of $S$
and that $d_1^{\mu_1}\cdots d_t^{\mu_t}$ is a separation of~$T$.
Then, from Theorem~\ref{thm:typecommute}, it follows
that for any choice of
irreducible polynomials $f_i$ of degree $c_i$ and $g_i$ of
degree $d_i$, the classes $f_i^{\lambda_i}$ and $g_i^{\mu_i}$ commute.
If $X_i$ and $Y_i$ are commuting representatives of these respective
classes, then the matrices $X=\diag(X_1, \ldots, X_t)$
and $Y=\diag(Y_1, \ldots, Y_t)$ commute.
Now each primary type~$c_i^{\lambda_i}$ derives (under separation operations)
from a particular component of $S$. If we select our
polynomials $f_i$ in such a way that blocks deriving from the
same component of $S$ have the same polynomial, then we find
that $\ty(X)=S$. Similarly we can choose the
polynomials $g_i$ so that $\ty(Y)=T$, and it follows that $S$ and $T$ commute. \end{proof}

\subsection{Reduction to nilpotent classes}

We now complete
the reduction of our general problem of commuting classes to the case of nilpotent classes.
Recall that we denote by $N(\lambda)$ the similarity class of nilpotent
matrices with cycle type $f_0^{\lambda}$, where $f_0(x)=x$. Recall also that a partition is said to be $t$-divisible
if it is $t\nu$ for some partition $\nu$.

\begin{theorem}\label{thm:primarycommuting}
Let $S=c^{\lambda}$ and $T=d^{\mu}$ be primary types of the same
dimension. Let $h=\hcf(c,d)$ and $\ell=\lcm(c,d)$.
Then $S$ and $T$ commute over $\F_q$ if and only if
$\lambda$ is $\frac{d}{h}$-divisible, $\mu$ is $\frac{c}{h}$-divisible,
and the nilpotent classes $N(\frac{h}{d}\lambda)$ and $N(\frac{h}{c}\mu)$
commute over $\F_{q^{\ell}}$.
\end{theorem}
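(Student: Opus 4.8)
The plan is to translate the commuting of two primary types $c^\lambda$ and $d^\mu$ into a statement about a single matrix algebra, and then to recognise that algebra as a matrix ring over a field extension, reducing everything to the nilpotent case. First I would fix commuting matrices $X$ of type $c^\lambda$ and $Y$ of type $d^\mu$ over $\F_q$, with associated irreducible polynomials $f$ (degree $c$) and $g$ (degree $d$). Since $X$ and $Y$ commute, $Y$ preserves the $\F_q[X]$-module structure on $V=\F_q^n$; writing $E$ for the field $\F_q[\overline X]\cong\F_{q^c}$ generated by the semisimple part of $X$, the key observation is that $Y$ lies in the centralizer of $\overline X$, and (after suitably decomposing) one can arrange that $Y$ is $E$-linear. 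Thus $V$ becomes a module over $E\otimes_{\F_q}(\text{algebra generated by }Y)$. The same reasoning applied to $Y$ forces the semisimple parts of $X$ and $Y$ to generate a common field $L\cong\F_{q^\ell}$ where $\ell=\lcm(c,d)$, since the compositum of $\F_{q^c}$ and $\F_{q^d}$ inside a splitting field is $\F_{q^\ell}$.

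Next I would make the divisibility conditions appear. Regard $V$ as an $L$-vector space. On the one hand $X$, viewed through its action, is built from $\F_q[X]$-cyclic blocks of $\F_q$-dimensions $c\lambda(j)$; reorganising $V$ over the larger field $L$ forces each such block to break into $\ell/c$ pieces, and the $\F_q[X]$-module structure shows these pieces are permuted transitively by $\mathrm{Gal}(L/E)$, which has order $\ell/c=d/h$. Counting dimensions over $L$, the nilpotent part of $X$ acting $L$-linearly has partition $\frac{c}{\ell}\cdot(c\lambda)=\frac{h}{d}\lambda$ repeated — wait, more carefully: the $L$-linear nilpotent operator induced by $X$ has partition obtained from $c\lambda$ by dividing part sizes by $\ell/c = d/h$, which requires $d/h \mid c\lambda(j)$ for every $j$; since $\hcf(c,d)=h$ this is equivalent to $\frac{d}{h}\mid\lambda(j)$, i.e.\ $\lambda$ is $\frac{d}{h}$-divisible, and the resulting partition is $\frac{h}{d}\lambda$. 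Symmetrically $\mu$ must be $\frac{c}{h}$-divisible. Having passed to $L$-linear operators, $X$ and $Y$ become (up to scalars from their semisimple parts, which now lie in $L$ and are harmless) nilpotent matrices over $L=\F_{q^\ell}$ with partitions $\frac{h}{d}\lambda$ and $\frac{h}{c}\mu$, which commute; that is exactly the asserted condition $N(\frac{h}{d}\lambda)$ and $N(\frac{h}{c}\mu)$ commute over $\F_{q^\ell}$.

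For the converse I would run the construction backwards: starting from commuting nilpotent matrices $M,M'$ over $\F_{q^\ell}$ with partitions $\frac{h}{d}\lambda$ and $\frac{h}{c}\mu$, I build $X=\alpha\,\mathrm{id}+(\text{restriction of scalars of }M)$ and $Y=\beta\,\mathrm{id}+(\text{restriction of scalars of }M')$ by viewing $\F_{q^\ell}$-spaces as $\F_q$-spaces; here $\alpha$ and $\beta$ are chosen generators of $\F_{q^c}$ and $\F_{q^d}$ inside $\F_{q^\ell}$, so that $X$ acquires associated polynomial of degree $c$ and type $c^\lambda$ (the $\ell/c$-fold splitting of each $L$-cyclic block over $\F_q$ recombines with the scalar $\alpha$ to give $\F_q[X]$-blocks of size $c\lambda(j)$), and likewise $Y$ gets type $d^\mu$. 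Since $M$ and $M'$ commute and $\alpha,\beta$ act as central scalars on the $\F_{q^\ell}$-structure, $X$ and $Y$ commute over $\F_q$, and by Theorem~\ref{thm:typecommute} we are done. The main obstacle I anticipate is the bookkeeping in the forward direction: one must show that the two fields generated by the semisimple parts genuinely have compositum of degree exactly $\ell$ (rather than merely dividing $\ell$) — this uses that $\hcf(c,d)=h$ so that $\F_{q^c}\cap\F_{q^d}=\F_{q^h}$ and the compositum has degree $cd/h=\ell$ — and then to verify that the $L$-dimension count forces precisely the stated divisibility and precisely the partitions $\frac{h}{d}\lambda$, $\frac{h}{c}\mu$, with no further constraint beyond the commuting of those nilpotent operators. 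Everything else is routine Galois descent and a Jordan--Chevalley argument.
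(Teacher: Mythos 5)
Your converse direction is essentially the paper's argument (take commuting nilpotents $M,M'$ over $\F_{q^\ell}$, form $\alpha I+M$ and $\beta I+M'$ with $\alpha,\beta$ of degrees $c,d$, and restrict scalars to $\F_q$), and your overall plan for the forward direction is in the right spirit, but two steps there are genuinely broken. First, the algebra $\F_q[\overline{X},\overline{Y}]$ generated by the two semisimple parts need not be a field: it is a quotient of $\F_{q^c}\otimes_{\F_q}\F_{q^d}\cong \F_{q^\ell}\times\cdots\times\F_{q^\ell}$ (with $h$ factors), and in general it is a product of several copies of $\F_{q^\ell}$. For example, with $c=d=2$, let $X$ act as multiplication by $\alpha$ on each of two copies of $\F_{q^2}$ and $Y$ as multiplication by $\alpha$ on the first and $\alpha^q$ on the second; then $X-Y$ is a nonzero zero-divisor in the algebra. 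So $V$ is not an $L$-vector space; it splits into up to $h$ isotypic pieces, and comparing those pieces is where the content of the theorem lies. Second, your bookkeeping for the divisibility is wrong: in the paper's notation $t\lambda$ means $\lambda+\cdots+\lambda$, so passing between an $\F_{q^\ell}$-linear nilpotent and its $\F_q$-restriction multiplies or divides the \emph{multiplicities} of parts, not their sizes. Your condition ``$\frac{d}{h}$ divides $\lambda(j)$ for every $j$'' is not the theorem's hypothesis and fails in easy examples: $1^{(3,3)}$ commutes with $2^{(3)}$ over any $\F_q$ (here $c=1$, $d=2$, $\frac{h}{d}\lambda=(3)$), although no part of $(3,3)$ is even. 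Relatedly, an $\F_q[X]$-cyclic block need not be $\overline{Y}$-stable, so it is not an $L$-subspace and cannot be cut into $\ell/c$ pieces as you describe.

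What is missing is the ingredient that actually produces the factor $\frac{d}{h}$ in the multiplicities. After extending scalars to $\F_{q^\ell}$, one decomposes $W=\F_{q^\ell}^n$ into the joint primary pieces $W_{ij}$ on which $X-\alpha_iI$ and $Y-\beta_jI$ are both nilpotent, with partitions $\lambda_{ij}$, $\mu_{ij}$; because $X$ and $Y$ have entries in $\F_q$, the Frobenius permutes the pairs $(\alpha_i,\beta_j)$ in orbits of length $\ell$ indexed by $i-j \bmod h$, forcing $\lambda_{ij}$ and $\mu_{ij}$ to depend only on $i-j \bmod h$. Then $\lambda=\sum_j\lambda_{1j}=\frac{d}{h}\sum_{k=1}^h\lambda_{1k}$, which is the correct divisibility statement, and restricting $X$ and $Y$ to $\bigoplus_{k=1}^h W_{1k}$ gives commuting actions whose types are separations of $1^{\frac{h}{d}\lambda}$ and $1^{\frac{h}{c}\mu}$ over $\F_{q^\ell}$, whence the nilpotent classes commute by Proposition~\ref{prop:doubledecomposition} and Theorem~\ref{thm:typecommute}. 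Your appeal to $\mathrm{Gal}(L/E)$ permuting the pieces of a single $X$-cyclic block only records that the eigenvalues $\alpha_1,\dots,\alpha_c$ all carry the same partition $\lambda$; it does not give this joint statement about the $W_{ij}$, which is where both the divisibility and the partitions $\frac{h}{d}\lambda$, $\frac{h}{c}\mu$ come from.
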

\begin{proof}
Suppose that $S$ and $T$ commute over $\F_q$.
Let $X$ and $Y$ be commuting elements of $\Mat_n(\F_q)$ with cycle types
$f^{\lambda}$ and $g^{\mu}$ respectively, where $\deg f=c$ and $\deg g=d$.
Let~$\alpha_1,\dots, \alpha_c$ be the roots of $f$ and
$\beta_1,\dots,\beta_d$ the roots of $g$ in the extension field
$\F_{q^\ell}$. Over this extension field,
it is easy to see that the cycle types of $X$ and $Y$ are given by
\begin{align*}
\cyc(X) &= (x-\alpha_1)^{\lambda}\cdots (x-\alpha_c)^{\lambda}, \\
\cyc(Y) &= (x-\beta_1)^{\mu}\cdots (x-\beta_d)^{\mu}.
\end{align*}

Let $W=\F_{q^\ell}^n$,
and let $W_{ij}$ denote the maximal
subspace of $W$ on which $X-\alpha_iI$ and $Y-\beta_jI$ are both nilpotent.
(So $W=\bigoplus_{ij}W_{ij}$.)
Let $\lambda_{ij}$ and~$\mu_{ij}$ be the partitions such that
the type of~$X$ on $W_{ij}$ is $1^{\lambda_{ij}}$
and the type of~$Y$ on $W_{ij}$ is $1^{\mu_{ij}}$.
Then clearly
$\sum_{j=1}^d \lambda_{ij} = \lambda$ for all $i$,
while $\sum_{i=1}^c \mu_{ij} = \mu$ for all~$j$.

Since $X$ and $Y$ have entries in $\F_q$, it follows that the
Frobenius automorphism $\xi\mapsto \xi^q$ of~$\F_{q^{\ell}}$ induces an isomorphism between the
$\F_{q^{\ell}}\langle X,Y\rangle$-modules $V_{ij}$ and $V_{i'j'}$
whenever \mbox{$i-j\equiv i'-j'\bmod h$}. Hence
\[
\lambda_{ij} = \lambda_{i'j'} \ \text{and} \ \mu_{ij} = \mu_{i'j'}
\ \text{whenever}\
i-j\equiv i'-j'\bmod h.
\]
Therefore
the partitions $\lambda_{ij}$ for $i \in \{1,\ldots, c\}$ and
$j \in \{1,\ldots, d\}$ are determined
by the partitions
$\lambda_{1k}$ for $k \in \{1,\ldots, h\}$, and since
\[
\lambda=\frac{d}{h}\sum_{k=1}^h \lambda_{1k},
\]
it follows that $\lambda$ is $\frac{d}{h}$-divisible.
Similarly, $\mu$ is $\frac{c}{h}$-divisible.

Now clearly the actions of $X$ and $Y$ on the subspace
$\bigoplus_{k=1}^{h}V_{1k}$ commute.
The type of $X$ on this submodule (defined over $\F_{q^{\ell}}$)
is $1^{\lambda_{11}}\cdots 1^{\lambda_{1h}}$,
which is a separation of $1^{\frac{h}{d}\lambda}$.
Similarly the type of $Y$ on the submodule is a separation
of $1^{\frac{h}{c}\mu}$. Hence, by the `if' direction of
Proposition~\ref{prop:doubledecomposition},
the types $1^{\frac{h}{d}\lambda}$ and $1^{\frac{h}{c}\mu}$ commute
over $\F_{q^{\ell}}$.
In particular, it follows from
Theorem~\ref{thm:typecommute}
that the nilpotent classes $N(\frac{h}{d}\lambda)$ and $N(\frac{h}{c}\mu)$
 commute
over this field.

For the converse, let $\lambda'=\frac{h}{d}\lambda$ and $\mu'=\frac{h}{c}\mu$, and suppose that the nilpotent classes
$N(\lambda')$ and~$N(\mu')$ commute over $\F_{q^{\ell}}$.
We shall denote by $m$ the integer $|\lambda'|$, which of course is equal
to~$|\mu'|$.
Let $\alpha$ and $\beta$ be elements of $\F_{q^{\ell}}$ whose degrees
over $\F_q$ are $c$ and $d$ respectively.
Since $N(\lambda')$ and $N(\mu')$ commute over $\F_{q^{\ell}}$, so do the classes
with cycle types $(x-\alpha)^{\lambda'}$ and~\hbox{$(x-\beta)^{\mu'}$}.
Let~$X$ and $Y$ be commuting elements of these respective classes.
Let $\phi$ be an embedding of the matrix algebra $\Mat_m(\F_{q^\ell})$
into $\Mat_{\ell m}(\F_q)$; then it is not hard to see that~$\phi(X)$ has class type~$c^\lambda$
and $\phi(Y)$ has class type $d^\mu$. It follows that these types
commute over~$\F_q$.
\end{proof}

It is worth noting that Theorem~\ref{thm:fielddependent}
below implies that the references to particular fields in the
statement of Theorem~\ref{thm:primarycommuting} are essential.
The following special case of the theorem, however, does not depend on the field of definition.

\begin{proposition}\label{prop:simpleprimarycommuting}
Let $d, k \in \N$. 
The types $d^{(k)}$ and $1^{(k, \ldots, k)}$ commute over any field.
\end{proposition}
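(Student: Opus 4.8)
The plan is to exhibit the commuting pair explicitly, since Proposition~\ref{prop:nilpotentpartsizes} has already done the substantive work. First observe that for the two types to have equal dimension the partition appearing in $1^{(k,\ldots,k)}$ must have exactly $d$ parts, each equal to $k$; in the notation for multiples of partitions this is precisely the partition $d(k)$. So the assertion to be proved is that $d^{(k)}$ and $1^{d(k)}$ commute over any field $K$. If $K$ carries no irreducible polynomial of degree $d$, then $d^{(k)}$ is not representable over $K$ and there is nothing to prove; so we may fix an irreducible $f \in K[x]$ of degree $d$.

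Now let $X$ be a matrix over $K$ of cycle type $f^{(k)}$ --- for instance the companion matrix of $f^k$ --- so that $\ty(X) = d^{(k)}$, and set $Y = f(X)$. Then $Y$ is a polynomial in $X$, so $X$ and $Y$ commute. By Proposition~\ref{prop:nilpotentpartsizes}, applied to $X$ with $\lambda = (k)$, the matrix $f(X)$ is nilpotent of type $1^{d(k)}$, that is, of type $1^{(k,\ldots,k)}$ with $d$ parts. Hence $X$ and $Y$ are commuting matrices of types $d^{(k)}$ and $1^{(k,\ldots,k)}$ respectively, and the two types commute over $K$.

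There is no real obstacle here beyond bookkeeping: the content is entirely contained in Proposition~\ref{prop:nilpotentpartsizes}, and the only points needing a moment's care are the observation that $1^{(k,\ldots,k)}$ must have $d$ parts for the dimensions to agree, and the harmless treatment of fields over which $d^{(k)}$ is not representable. If one prefers a coordinate-free formulation, one may take $V = K[x]/(f^k)$ with $X$ and $Y$ the operations of multiplication by $x$ and by $f$ respectively; the verification is the same.
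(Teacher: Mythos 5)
Your proof is correct, but it takes a genuinely different route from the paper's. The paper treats finite fields via Theorem~\ref{thm:primarycommuting} and then, for an arbitrary field, exhibits an explicit commuting pair: the block matrix $P^{(k)}$ built from the companion matrix $P$ of an irreducible $f$ of degree $d$ (blocks $P$ on the diagonal, $I$ on the superdiagonal) represents $d^{(k)}$ and visibly commutes with $I \otimes J$, where $J$ is the $k\times k$ Jordan block with eigenvalue $1$; conjugating $I\otimes J$ to $J \otimes I = \diag(J,\dots,J)$ shows the second matrix has type $1^{(k,\ldots,k)}$. You instead take a single cyclic block $X$ of cycle type $f^{(k)}$ and put $Y = f(X)$, invoking Proposition~\ref{prop:nilpotentpartsizes} (whose proof is field-independent) to identify the type of $Y$ as $1^{d\lambda}$ with $\lambda=(k)$, i.e.\ $1^{(k,\ldots,k)}$ with $d$ parts; commutation is automatic since $Y$ is a polynomial in $X$. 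Your argument is a little more economical---no block-matrix check or conjugation step---and it gives the slightly stronger fact that a (nilpotent) representative of $1^{(k,\ldots,k)}$ lies in $K[X]$, in the spirit of Theorem~\ref{thm:polytypes} but valid over any field in this direction; the paper's construction instead produces a unipotent representative and makes both matrices completely explicit. One small caveat, which you share with the paper rather than introduce: if $K$ admits no irreducible polynomial of degree $d$, then $d^{(k)}$ is not representable over $K$, and under the paper's definition the two types then do not commute at all, so strictly the proposition should be read as asserting commuting whenever $d^{(k)}$ is representable; your ``nothing to prove'' disposal of that case matches the evident intent (the paper's proof likewise assumes such an $f$ exists), but it is worth stating the hypothesis rather than calling the case vacuous.
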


\begin{proof}
If the field in question is finite, then the proposition follows from
Theorem~\ref{thm:primarycommuting}. For it suffices to show that
the type $\frac{1}{d}(k, \ldots, k) = (k)$ commutes with
itself over
$\F_{q^d}$, and certainly this is the case.

A straightforward modification of the last paragraph of the proof of Theorem~\ref{thm:primarycommuting}
would allow us to deal with arbitrary fields;  however we prefer the following short argument involving tensor products.
Let $f$ be an irreducible polynomial of degree $d$ and let $P$ be the
companion matrix of $f$. The type $d^{(k)}$ is represented by the
$dk \times dk$ matrix
\[ P^{(k)} =
\left(\begin{matrix}
P & I &   &      \\
  & P & I &      \\
  &   &\ddots&\ddots\\
  &   &    &   P \end{matrix}\right). \]
Let $J = J(k)$ be the $k$-dimensional
Jordan block with eigenvalue $1$. It is clear that $P^{(k)}$
commutes with the tensor product $I \otimes J$ (which is obtained from the matrix above by substituting $I$ for each occurrence of $P$).
And $I \otimes J$ is conjugate to
$J\otimes I = \mathrm{diag}(J,\dots, J)$, which has type
$1^{(k,\dots,k)}$. Hence the types $d^{(k)}$ and $1^{(k, \ldots, k)}$ commute.
\end{proof}

We end this section with two examples of how the steps
in our reduction can be carried out, which illustrate
the various results of this section.

\begin{example}
Let $p$, $q$, $r$, $s$ and $t$ be the following irreducible polynomials 
over~$\F_2$:
\[
\begin{array}{ll}
\textrm{linear: } & p(x) = x,\  q(x) = x + 1;\\
\textrm{quadratic: } & r(x) = x^2 + x + 1;\\
\textrm{cubic: }  &  s(x) = x^3 + x + 1,\ t(x) = x^3 + x^2 + 1.
\end{array}
\]
Let $C$ be the similarity class of matrices
over $\F_2$ with cycle type $p^{(12,12)}q^{(2,2,2)}r^{(3)}s^{(1)}$
and let~$D$ be the similarity class with cycle type $r^{(7,5)} t^{(2,2,1)}$.
We shall prove
that~$C$ commutes with~$D$.

By Theorem~\ref{thm:typecommute}, this
is equivalent
to showing that the 
types
\begin{align*}
S &= 1^{(12,12)}1^{(2,2,2)}2^{(3)}3^{(1)}, \\
T &= 2^{(7,5)}3^{(2,2,1)}
\end{align*}
commute. This, in turn, will follow from
Proposition~\ref{prop:doubledecomposition}, if we can show
that~$S$ commutes componentwise with the separation
$T^\star = 2^{(7,5)}3^{(2)}3^{(2)}3^{(1)}$ of $T$.
(This example was chosen to make the point that
it is not necessary that the separated types
can be represented over $\F_2$.)
By Theorem~\ref{thm:primarycommuting} we see that
$1^{(12,12)}$ commutes with $2^{(7,5)}$ over $\F_2$
if and only if $1^{(6,6)}$ commutes with $1^{(7,5)}$ over $\F_4$;
that this is the case follows from Proposition~\ref{prop:nncommute} below, which implies that the
nilpotent classes $N(6,6)$ and $N(7,5)$ commute over $\F_4$.
It is immediate from Theorem~\ref{thm:primarycommuting}
that $1^{(2,2,2)}$ commutes with $3^{(2)}$,
and that $2^{(3)}$ commutes with $3^{(2)}$.
Hence $S$ and $T^\star$ commute componentwise,
and so $C$ and $D$ commute.
\end{example}

The converse directions of
Proposition~\ref{prop:doubledecomposition} and
Theorem~\ref{thm:primarycommuting} can in principle be used as part of a argument that two similarity classes do not commute; again,
results about commuting of nilpotent classes will generally be needed to complete such an argument.
The following example is illustrative.
\begin{example}
Let the polynomials $p$, $q$, $r$, $s$ and $t$, the class $C$, and the type $S$ be as in the previous example.
Let $D$ be the similarity class over $\F_2$ with cycle type $r^{(8,4)}t^{(2,2,1)}$. The class type of $D$ is
\[
T=2^{(8,4)}3^{(2,2,1)}.
\]
Suppose that a separation $T^*$ of $T$ commutes componentwise with a separation $S^*$ of $S$; then one of $2^{(8)}$ or $2^{(8,4)}$ is a component
of $T^*$. The first possibility is ruled out since $S^*$ can have no component of dimension $16$. The only possible component of $S^*$ of dimension
$24$ is $1^{(12,12)}$, and so if our supposition is correct, then the primary types $2^{(8,4)}$ and $1^{(12,12)}$ must commute over~$\F_2$. By Theorem
\ref{thm:primarycommuting},
this is the case only if $1^{(8,4)}$ and $1^{(6,6)}$ commute. But
by Proposition~\ref{prop:twopart} below,
the nilpotent classes $N(8,4)$ and $N(6,6)$ do not commute over
any field. It follows that~$C$ and $D$ do not commute.
\end{example}

\section{Types and determinants}\label{sect:determinants}

The main object of this section is to establish Theorem \ref{thm:determinants},
concerning determinants of
elements of centralizer algebras. The following definition is key.

\begin{definition}
Let $M$ be a matrix with class type $d_1^{\lambda_1} \cdots d_t^{\lambda_t}$.
The \emph{part-size invariant} of~$M$ is defined to be the highest common factor
of all of the parts of the partitions~$\lambda_1, \ldots, \lambda_t$.
\end{definition}

\begin{theorem}\label{thm:determinants}
Let $M\in\Mat_n(\F_q)$ have part-size invariant $k$.
The determinants which occur in the centralizer of $M$ in
$\Mat_n(\F_q)$ are precisely the $k$-th powers in $\F_q$.
\end{theorem}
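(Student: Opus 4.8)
The strategy is to reduce to the primary case and then compute directly. First I would observe that if $M = \diag(M_1, \ldots, M_t)$ is a block decomposition into primary parts (with $M_i$ having class type $d_i^{\lambda_i}$), then the centralizer $\Cent M$ decomposes as a direct sum $\bigoplus_i \Cent M_i$, so that the set of determinants occurring in $\Cent M$ is the set of products $\prod_i \det N_i$ with $N_i \in \Cent M_i$. If each such set of determinants is the set of $k_i$-th powers in $\F_q^\times$ together with $0$, where $k_i = \hcf(\text{parts of }\lambda_i)$, then a standard fact about the multiplicative group $\F_q^\times$ being cyclic shows the product set is the set of $\hcf(k_1, \ldots, k_t)$-th powers, which is exactly the set of $k$-th powers. (One must also handle the determinant $0$, which arises as soon as one block is singular, i.e. whenever $M$ is not invertible; but $0$ is a $k$-th power, so this causes no trouble.) So it suffices to prove the theorem for a primary matrix of class type $d^\lambda$ with part-size invariant $k = \hcf(\text{parts of }\lambda)$.

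For the primary case, let $f$ be the irreducible polynomial of degree $d$ associated with $M$, with a root $\alpha$ in $\F_{q^d}$. The centralizer of $M$ over $\F_q$ is, after base change, closely related to the centralizer over $\F_{q^d}$ of a matrix whose cycle type is $(x-\alpha)^\lambda (x-\alpha^q)^\lambda \cdots$; more usefully, $\Cent M$ is isomorphic as an $\F_q$-algebra to the centralizer algebra of a single Jordan-type block pattern over the field $\F_{q^d} = \F_q(\alpha)$. Concretely, one identifies $\Cent M$ with the algebra of $\F_{q^d}$-linear endomorphisms of $\bigoplus_j \F_{q^d}[x]/(x^{\lambda(j)})$ commuting with multiplication by $x$ — i.e. with a ``staircase'' algebra of block matrices over $\F_{q^d}$ whose blocks are truncated Toeplitz (upper-triangular with constant diagonals) of sizes governed by $\lambda$. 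Under this identification, the $\F_q$-determinant of an element equals the $\F_{q^d}/\F_q$-norm of its $\F_{q^d}$-determinant. So I need two facts: (i) the set of $\F_{q^d}$-determinants of elements of this staircase algebra is the set of $k$-th powers in $\F_{q^d}$ (together with $0$); and (ii) the norm map $\F_{q^d}^\times \to \F_q^\times$ carries $k$-th powers onto $k$-th powers. Fact (ii) is immediate since $\hcf(k, q^d - 1)$-th powers in $\F_{q^d}$ map under a surjective norm to $\hcf(k, q-1)$-th powers in $\F_q$, and $\hcf(k, q^d-1) = \hcf(k, q-1)$ whenever $k \mid q^d - 1$ is the relevant case — more carefully, the norm is surjective and multiplicative, so it sends the subgroup of $k$-th powers onto the subgroup of $k$-th powers of $\F_q^\times$.

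Fact (i) is the heart of the matter and where most of the work lies. The determinant of a block-staircase Toeplitz matrix over $\F_{q^d}$ is, up to sign, the resultant-like product of the ``leading'' Toeplitz coefficients, and a dimension count (using the theory of the $\lambda$-poset of parts) shows the determinant is a monomial $\prod_j c_j^{e_j}$ in the free diagonal parameters $c_j \in \F_{q^d}$, with exponents $e_j$ summing appropriately; the exponent attached to the block of part-size $\lambda(j)$ turns out to be the multiplicity of $\lambda(j)$ as a part, and the overall determinant as the $c_j$ range freely is the set of products of $m_s$-th powers where $m_s$ is the multiplicity of part size $s$. Since $\F_{q^d}^\times$ is cyclic, this product set is the set of $\hcf_s(m_s)$-th powers. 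But here I would invoke the classical identity $\hcf\{\text{parts of }\lambda\} = \hcf\{\text{multiplicities of parts of }\lambda\}$ — no, that is false in general, so instead I expect the correct computation to yield directly that the exponent vector's gcd equals $\hcf\{\text{parts of }\lambda\} = k$, via the combinatorics of how the staircase blocks overlap. \textbf{The main obstacle} is precisely pinning down this determinant computation for the staircase algebra: showing that as the free parameters range over $\F_{q^d}$, the determinant ranges over exactly the $k$-th powers, which requires both an upper bound (every determinant is a $k$-th power — likely the easier direction, by exhibiting the determinant as a product in which each factor is forced to be a $k$-th power because of repeated blocks) and a lower bound (every $k$-th power is attained — by choosing the free parameters cleverly, reducing to the case $\lambda = (k, k, \ldots, k)$ or $\lambda = (k)$ and using Proposition~\ref{prop:simpleprimarycommuting}-style reasoning). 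I would set up the upper bound first, then handle attainment by an explicit construction.
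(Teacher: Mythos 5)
Your outer framework is fine and in places genuinely different from the paper's: the decomposition of $\Cent M$ over the primary components, the identification of the centralizer of a primary matrix with the centralizer of a nilpotent matrix of type $\lambda$ over $\F_{q^d}$, and the compatibility $\det_{\F_q}(Y)=\mathrm{N}_{\F_{q^d}/\F_q}(\det_{\F_{q^d}}(Y))$ together with surjectivity of the norm are all standard and would work. (The paper never passes to $\F_{q^d}$: for the upper bound it uses that $Y\in\Cent M$ also centralizes $f(M)$, which is nilpotent over $\F_q$ of type $1^{d\lambda}$ by Proposition~\ref{prop:nilpotentpartsizes}, and for attainment it combines Lemma~\ref{lemma:polynomialconstant} with Theorem~\ref{thm:polytypes} rather than constructing centralizing elements by hand.)

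The genuine gap is your fact (i), which you yourself flag as ``the main obstacle'' and never establish: that the determinants occurring in the centralizer of a nilpotent matrix of type $\lambda$ are exactly the $k$-th powers, $k=\hcf$ of the parts. Your sketch of it is wrong as stated: you claim the determinant is a monomial whose exponent at the block belonging to part size $\lambda(j)$ is the \emph{multiplicity} of that part, which would give $\hcf\{\text{multiplicities}\}$-th powers; you then notice the identity you would need is false and merely ``expect'' the correct computation to come out. The correct structure is the transpose of what you wrote: ordering a cyclic basis suitably, an element $Y$ of the centralizer is block upper triangular with, for each part size $h$ of multiplicity $m_h$, the same $m_h\times m_h$ block $Y_h$ (the action of $Y$ on the span of the height-$h$ cyclic vectors modulo lower terms) repeated $h$ times down the diagonal, so that $\det Y=\prod_h(\det Y_h)^h$ --- the exponents are the part sizes, not the multiplicities. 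This is exactly what the paper proves via the composition-series argument of Propositions~\ref{prop:simples}--\ref{prop:nilpotentpowers} (the simple module $S_h$ has dimension $m_h$ and occurs with multiplicity $h$), and it is the entire content of the theorem in the nilpotent case; the attainment direction additionally needs the observation (Lemma~\ref{lemma:cyclic}) that the blocks $Y_h$ can be prescribed arbitrarily. Until this computation is pinned down, your argument proves neither inclusion of the theorem.
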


Part of the motivation for this investigation comes from
the authors' paper~\cite{BW1} on
the distribution of
conjugacy classes of a group $G$ across the cosets of a
normal subgroup $H$, where~$G/H$ is abelian.
The \emph{centralizing subgroup} of a class $C$ with respect to $H$ was
defined to be
the subgroup~$\Cent_G(g)\cdot H$, where
$g\in C$ may be chosen arbitrarily. It was proved that if $G$ is finite and $G/H$ is cyclic, then
the classes with
centralizing subgroup $K$ are uniformly distributed across the
cosets of $H$ in~$K$.

Theorem \ref{thm:determinants} treats the case where $G=\GL_n(\F_q)$ and
$H=\SL_n(\F_q)$.
It is clear that the subgroups $K$ lying in the range
$H\le K\le G$ may be defined in terms of the determinants of
their elements; specifically, the index
$|K:H|$ is equal to the order of the subgroup of $\F_q^{\times}$
generated by the determinants of the matrices in $K$.
Hence, in order to calculate the centralizing
subgroup of a matrix, we must decide which determinants
occur in its centralizer.
The following corollary of Theorem~\ref{thm:determinants}
shows that the answer to this question depends only on the
class type of the matrix concerned.

\begin{corollary}\label{cor:centsubgroup}
Let $M\in\GL_n(\F_q)$ have part-size invariant $k$, and let $c=\hcf(q-1,k)$.
The centralizing subgroup
of the conjugacy class of $M$
is the unique index $c$ subgroup of $\GL_n(\F_q)$ containing
$\SL_n(\F_q)$.
\end{corollary}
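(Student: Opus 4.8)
The plan is to deduce the corollary from Theorem~\ref{thm:determinants} by a routine translation from the centralizer \emph{algebra} to the centralizer \emph{subgroup}. Write $G = \GL_n(\F_q)$, $H = \SL_n(\F_q)$, and let $Z = \Cent_G(M)$, the centralizer of $M$ in $G$; this is precisely the group of units of the centralizer algebra $\Cent M \le \Mat_n(\F_q)$. The determinant restricts to a homomorphism $\det \colon Z \to \F_q^\times$, whose image I shall call $\Delta$. Since $H$ is the kernel of $\det \colon G \to \F_q^\times$, the centralizing subgroup $Z \cdot H$ of the class of $M$ is exactly the preimage $\det^{-1}(\Delta)$; in particular it is a subgroup of $G$ containing $H$, and its index in $G$ equals $|\F_q^\times : \Delta|$.

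Next I would identify $\Delta$ precisely. By Theorem~\ref{thm:determinants} the determinants occurring among all elements of $\Cent M$ are exactly the $k$-th powers in $\F_q$, where $k$ is the part-size invariant of $M$. An element of $\Cent M$ lies in $Z$ if and only if its determinant is nonzero, so $\Delta$ is exactly the set of nonzero $k$-th powers in $\F_q^\times$ (the only $k$-th power that is discarded in passing to the units is $0$). As $\F_q^\times$ is cyclic of order $q-1$, the subgroup of $k$-th powers has index $\hcf(q-1,k) = c$, so $|\F_q^\times : \Delta| = c$.

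It then remains to observe that $G/H \cong \F_q^\times$ is cyclic, and $c$ divides $q - 1$, so $G/H$ has a unique subgroup of index $c$; correspondingly $G$ has a unique subgroup of index $c$ containing $H$. By the previous two paragraphs $Z \cdot H$ is a subgroup of $G$ containing $H$ of index exactly $c$, so it is that unique subgroup, which is the assertion of the corollary.

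I do not expect a genuine obstacle here: the substantive content is already carried by Theorem~\ref{thm:determinants}, and what is added is only the bookkeeping above. The two points that call for a word of care are the exclusion of $0$ when passing from the centralizer algebra to its group of units, and the elementary fact that the image of the $k$-th power map on a cyclic group of order $q-1$ has index $\hcf(q-1,k)$; beyond these, one is simply invoking the standard correspondence between subgroups of $G$ containing $H$ and subgroups of the cyclic quotient $G/H$.
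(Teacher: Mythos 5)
Your proof is correct and follows essentially the same route as the paper, which states the corollary as an immediate consequence of Theorem~\ref{thm:determinants} together with the observation (made in the surrounding discussion) that a subgroup $K$ with $\SL_n(\F_q)\le K\le \GL_n(\F_q)$ is determined by the subgroup of $\F_q^\times$ generated by its determinants. Your two points of care---discarding determinant $0$ when passing to units, and the index $\hcf(q-1,k)$ of the $k$-th powers in the cyclic group $\F_q^\times$---are exactly the bookkeeping the paper leaves implicit.
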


In \S \ref{subsec:detcent} below we prove a special case of Theorem \ref{thm:determinants}, namely that the determinants
in the centralizer of a nilpotent matrix are $k$-th powers, where $k$ is the part-size invariant.
The proof of Theorem~\ref{thm:determinants} is completed in \S \ref{subsec:detproof}.
We end in \S \ref{subsec:detclasses} by discussing the natural---but
surprisingly hard---question
of which scalars can appear as the determinant of
a matrix of a given type.

\subsection{Determinants in the centralizer of a nilpotent matrix}\label{subsec:detcent}
In this section we let
$M \in \Mat_n(\F_q)$ be a nilpotent matrix lying
in the similarity class $N(\lambda)$. Let
$A = \Cent M$ be the subalgebra of $\Mat_n(\F_q)$ consisting
of the matrices that centralize
$M$. We shall find the composition
factors of $V = \F_q^n$ as a right $A$-module; using this result we
describe the determinants of
the matrices of $A$.
For some related results on the lattice of $A$-submodules
of $V$, the reader is referred to \cite[Chapter 14]{Gohberg}.

\begin{definition}\label{defn:height}
For $v\in V$ we define the \emph{height} of $v$, written $\height(v)$,
to be the least integer~$h$ such that $v\in \ker M^h$.
\end{definition}

\begin{definition}\label{defn:cyclic}
We shall say that a vector $u \in V$ is a \emph{cyclic vector}
for $M$ if $u$ is not in the image of $M$.
\end{definition}

The proof of the following well-known lemma is straightforward, and
is omitted.

\begin{lemma}\label{lemma:cyclic}
An element $Y \in A$ is uniquely determined by its effect on
the cyclic vectors of $M$. If $u_1, \ldots, u_t$
are linearly independent cyclic vectors and $v_1, \ldots, v_t$
are any vectors such that $\height(v_i) \le \height(u_i)$ for every $i$,
then there is an element $Y \in A$ such that $u_iY = v_i$ for
each~$i$.
\end{lemma}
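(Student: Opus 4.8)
The plan is to work with the Jordan basis of $M$ and to observe that the cyclic vectors of $M$ are exactly the vectors whose component in the direction of some Jordan block generator is nonzero; more precisely, if we fix a Jordan basis, the image of $M$ is the span of all basis vectors except the generators $e_1, \ldots, e_r$ of the $r$ Jordan blocks. So the cyclic vectors are those vectors with a nonzero coefficient on at least one $e_i$. For the uniqueness statement, the point is that $V$ is generated as a $K[M]$-module by the block generators $e_1, \ldots, e_r$, each of which is cyclic; hence $V$ is spanned by $M$-orbits of cyclic vectors, and since an element $Y \in A$ commutes with $M$, its action on $M^j v$ is determined by $Y M^j v = M^j (Yv)$. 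Thus knowing $Yv$ for all cyclic $v$ determines $Y$ on a spanning set, hence on all of $V$. (One has to say a word about why the cyclic vectors actually span: the generators $e_i$ are themselves cyclic, and they generate $V$ over $K[M]$, so a fortiori the cyclic vectors span.)

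For the existence statement, suppose $u_1, \ldots, u_t$ are linearly independent cyclic vectors and $v_1, \ldots, v_t$ satisfy $\height(v_i) \le \height(u_i)$. I would first extend $\{u_1, \ldots, u_t\}$ to a set of $r$ linearly independent cyclic vectors $u_1, \ldots, u_r$ that project to a basis of $V/\im M$ — this is possible because the $u_i$, being cyclic, have linearly independent images in the $r$-dimensional space $V/\im M$, and any linearly independent set there extends to a basis. Then $u_1, \ldots, u_r$ is a set of $K[M]$-module generators of $V$ with $\sum \height(u_i) = \sum \lambda(i) = n$, so in fact $V = \bigoplus_i K[M] u_i$ with $K[M] u_i$ cyclic of dimension $\height(u_i)$ (a dimension count forces the sum to be direct). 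Set $v_i = 0$ for $t < i \le r$; then $\height(v_i) \le \height(u_i)$ still holds. Now define $Y$ on each summand $K[M]u_i$ by $Y(M^j u_i) = M^j v_i$; this is well-defined precisely because $\height(v_i) \le \height(u_i)$ guarantees $M^{\height(u_i)} v_i = 0$, so the relation $M^{\height(u_i)} u_i = 0$ is respected. Extending linearly over the direct sum gives a well-defined $K$-linear map $Y$ which commutes with $M$ by construction, hence $Y \in A$, and $u_i Y = v_i$ for all $i$, in particular for $i \le t$.

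The only subtle point — and the one I would be most careful about — is the well-definedness of $Y$ on each cyclic summand: one needs that the annihilator of the generator $u_i$ in $K[M]$ is exactly $(x^{\height(u_i)})$, and that the height condition on $v_i$ is precisely what is needed to make the assignment factor through this quotient. Everything else is a dimension count (to get the direct sum decomposition) and the elementary fact that cyclic vectors map onto a basis of $V/\im M$. Since the paper explicitly calls this lemma well-known and omits the proof, a sketch at roughly this level of detail is appropriate, and I would not belabour the routine verifications.
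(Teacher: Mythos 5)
Your uniqueness argument is fine, and your mechanism for existence---defining $Y$ on a direct sum of cyclic submodules and checking that the condition $\height(v_i)\le\height(u_i)$ is exactly what makes the assignment respect the annihilator of each generator---is the right one (the paper omits its proof, but this is presumably what is intended). The genuine gap is in the passage from the stated hypothesis to that mechanism. Linearly independent cyclic vectors need \emph{not} have linearly independent images in $V/\im M$: for $M\in N(2,2)$ with cyclic basis $e,eM,f,fM$, the vectors $u_1=e$ and $u_2=e+eM$ are linearly independent and both cyclic of height $2$, yet they agree modulo $\im M$. Moreover this is not a repairable slip of wording in your write-up, because the lemma under its literal reading fails for this pair: take $v_1=0$ and $v_2=f$ (heights $0\le 2$ and $2\le 2$); any $Y\in A$ with $eY=0$ also has $(eM)Y=(eY)M=0$, hence $u_2Y=0\ne f$. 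So mere linear independence of the $u_i$ cannot be the operative hypothesis, and the step you were most worried about (well-definedness on each summand) is not where the trouble lies.

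There is a second, independent, failure in the reduction: even when the images of the $u_i$ in $V/\im M$ are linearly independent, your claims that $\sum_i\height(u_i)=n$ and that a dimension count forces $V=\bigoplus_i K[M]u_i$ are false. For $M\in N(3,1)$ with block generators $e$ (height $3$) and $f$ (height $1$), the cyclic vectors $u_1=e$ and $u_2=f+eM$ are independent modulo $\im M$ but have heights $3$ and $2$, summing to $5>4$; correspondingly $K[M]u_1\cap K[M]u_2$ contains $eM^2$, and the conclusion again fails for them (with $v_1=0$, $v_2=eM$ one is forced to $fY=eM$, impossible since $(fY)M=(fM)Y=0$ while $eM^2\ne 0$). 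What is true, and what the paper actually uses, is the statement for $u_1,\dots,u_t$ that extend to generators of a cyclic decomposition $V=\bigoplus_{i=1}^r K[M]u_i$ --- for instance the generators of a cyclic basis, or height-$h$ cyclic vectors linearly independent modulo $V_{h+1}M+V_{h-1}$ as in Proposition~\ref{prop:simples}; under such a hypothesis your construction of $Y$ goes through verbatim. To complete a correct proof you should either assume this stronger hypothesis outright, or add the (graded) independence condition needed to guarantee that the given $u_i$ can be completed to such a generating system, and prove that completion is possible.
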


As in Lemma~\ref{lemma:partsizecalculation}, we
let $m_h$ be the number
of parts of $\lambda$ of size $h$.
For $h \in \N_0$, we shall write~$V_h$ for $\ker M^h$.

\begin{proposition}\label{prop:simples}
For each $h \in \N$,
the subspace $V_h$ is an $A$-submodule of~$V$
containing $V_{h+1}M + V_{h-1}$ as an
$A$-submodule.
Moreover if $m_h \not= 0$ then
\[ V_h \left/\right. (V_{h+1}M + V_{h-1}) \]
is a simple $A$-module of dimension~$m_h$.
\end{proposition}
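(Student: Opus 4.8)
The plan is to work entirely with the $A$-module structure on $V=\F_q^n$, where $A=\Cent M$ acts on the right, and to exploit Lemma~\ref{lemma:cyclic} to recognize the relevant subquotient as a matrix module. First I would check that each $V_h=\ker M^h$ is an $A$-submodule: this is immediate, since $Y\in A$ commutes with $M$, so $M^hv=0$ implies $M^h(vY)=(M^hv)Y=0$. The same reasoning shows $V_{h-1}\subseteq V_h$ is an $A$-submodule, and that $V_{h+1}M$ is an $A$-submodule of $V_h$ (if $v\in V_{h+1}$ then $M^h(vM)=(M^{h+1}v)M=0$, so $vM\in V_h$; and $A$-invariance again follows from commutativity with $M$ and the fact that right multiplication by $M$ commutes with the right $A$-action because $A$ centralizes $M$). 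Hence $V_{h+1}M+V_{h-1}$ is an $A$-submodule of $V_h$, which gives the first assertion.

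Next I would compute the dimension of the quotient $W_h := V_h/(V_{h+1}M+V_{h-1})$, and I expect this to be exactly $m_h$. The natural way to see this is via the Jordan decomposition of $M$ into cyclic blocks: $V_h$ has dimension $\sum_{j}\min(h,j)m_j$ (by the same bookkeeping as in Lemma~\ref{lemma:partsizecalculation}), and one can compute $\dim(V_{h+1}M+V_{h-1})$ similarly. In Jordan coordinates, $V_{h-1}$ accounts for the bottom $h-1$ basis vectors of every block of size $\ge h-1$, and $V_{h+1}M$ contributes, within a block of size $s$, the span of $M(\text{bottom }\min(h+1,s)\text{ vectors})$, which lands in the bottom $\min(h,s-1)$ vectors — in particular it fills up all of the bottom $h$ vectors precisely when $s\ge h+1$. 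So the only basis vectors of $V_h$ not covered by $V_{h+1}M+V_{h-1}$ are the top vectors (the height-$h$ generators) of blocks of size exactly $h$, of which there are $m_h$. I would present this as a clean dimension count rather than grinding through it block by block.

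Then comes the identification of $W_h$ as a simple $A$-module. The key point is that $V_h/V_{h-1}$ is spanned by images of height-$h$ vectors, and among these the height-$h$ cyclic vectors (those not in $\im M$) play a distinguished role. I would fix $m_h$ linearly independent cyclic vectors $u_1,\dots,u_{m_h}$ of height exactly $h$ — these exist precisely because $m_h\ne 0$ — and argue that their images in $W_h$ form a basis (they are nonzero and independent there by the dimension count, since anything of height $h$ that is either non-cyclic or of lower height lies in $V_{h+1}M + V_{h-1}$). Simplicity then follows from the transitivity clause of Lemma~\ref{lemma:cyclic}: given any nonzero $w\in W_h$, lift it to some $v\in V_h$ which, modulo $V_{h-1}$, is a nonzero combination of the $u_i$, hence (after a change of basis among the $u_i$) we may assume $u_1$ appears; since $\height(u_j)=\height(u_1)=h$ for all $j$, Lemma~\ref{lemma:cyclic} produces $Y\in A$ sending $u_1\mapsto u_j$ and $u_i\mapsto 0$ for $i\ne 1$ (the height conditions $\height(u_j)\le\height(u_1)$ are satisfied), and one checks that $wY$ is a nonzero multiple of the image of $u_j$. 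Thus the submodule generated by any nonzero $w$ contains every basis vector of $W_h$, so $W_h$ is simple of dimension $m_h$.

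The main obstacle I anticipate is not conceptual but bookkeeping: making the dimension count for $\dim(V_{h+1}M+V_{h-1})$ airtight, in particular verifying that the sum $V_{h+1}M+V_{h-1}$ is not far from direct and that its dimension really is $\dim V_h - m_h$. The subtlety is that $V_{h+1}M$ and $V_{h-1}$ overlap (both contain lots of low-height vectors), so one must compute $\dim(V_{h+1}M+V_{h-1}) = \dim V_{h+1}M + \dim V_{h-1} - \dim(V_{h+1}M\cap V_{h-1})$ carefully, or else argue directly in Jordan coordinates which basis vectors are hit. Once the dimension equals $m_h$ is pinned down, the simplicity argument via Lemma~\ref{lemma:cyclic} is short and robust, since all the chosen cyclic vectors share the common height $h$, which is exactly what that lemma needs.
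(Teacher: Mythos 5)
Your proposal follows the same route as the paper: the submodule statements are checked directly, the quotient is identified via the cyclic (Jordan) structure, and simplicity comes from Lemma~\ref{lemma:cyclic} applied to height-$h$ cyclic vectors; your ``any nonzero element generates'' phrasing is just a reformulation of the paper's observation that $A$ induces the full matrix algebra on the quotient. Your Jordan-coordinate dimension count is also correct: within a block of size $s$, the subspace $V_{h+1}M+V_{h-1}$ covers the bottom $\min(h,s-1)$ (resp.\ $\min(h-1,s)$) basis vectors, so the only uncovered vectors are the generators of the blocks of size exactly $h$, giving $\dim V_h/(V_{h+1}M+V_{h-1})=m_h$.

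One step is not justified as written, however. You fix an \emph{arbitrary} set of $m_h$ linearly independent cyclic vectors of height exactly $h$ and assert that their images are ``independent there by the dimension count.'' That does not follow, and it can fail: take $M\in N(2,1,1)$ with cyclic basis $e,eM,f,g$ (so $eM^2=fM=gM=0$) and $h=1$. Then $m_1=2$, and $u_1=f$, $u_2=f+eM$ are linearly independent cyclic vectors of height $1$, but $u_2-u_1=eM\in V_2M$, so their images in $V_1/(V_2M+V_0)$ coincide; they neither form a basis nor span, and for such a choice your later lift of an arbitrary $w$ as a combination of the $u_i$ modulo $V_{h+1}M+V_{h-1}$ need not exist. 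The repair is immediate and is already contained in your own count: take $u_1,\dots,u_{m_h}$ to be the generators of the blocks of size exactly $h$ in a fixed cyclic basis, which your computation shows map to a basis of the quotient; with that choice the simplicity argument goes through verbatim (and your ``change of basis among the $u_i$'' is then harmless, since it is only a reindexing and rescaling of coefficients). The published proof chooses the $u_i$ with the same looseness (``a maximal set of linearly independent cyclic vectors of height $h$'' can in fact have more than $m_h$ elements, as the example above shows), so this is a shared imprecision; but since you offer the dimension count as the justification for independence, you should make the choice of the $u_i$ explicit.
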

\begin{proof}
The proof of the first statement is straightforward, and we omit it;
we shall outline a proof of the second statement.

Let $u_1, \ldots, u_{m_h}$ be
a maximal set of linearly independent cyclic vectors
each of height $h$. It is not hard to see that
$u_1, \ldots, u_{m_h}$ span a complement in $V_h$
to $V_{h+1}M + V_{h-1}$.
By the previous lemma, for any vectors $v_1, \ldots, v_{m_h}$ in
$V_h$,
there exists $Y \in A$
such that $u_iY = v_i$ for each~$i$.
This implies that $A$ acts as a full matrix
algebra in its action on the quotient module
$V_h / (V_{h+1}M + V_{h-1})$. Hence the quotient
module is simple.
\end{proof}

For $h$ such that $m_h \not =0$, let
$S_h = V_h / ( V_{h+1}M + V_{h-1})$
be the simple $A$-module constructed in Proposition \ref{prop:simples}.
If $h \not= h'$ and both
$S_h$ and $S_{h'}$ are defined, then by Lemma~\ref{lemma:cyclic}, it
is possible to define a matrix $Y \in A$
such that $Y$ acts as the identity
on the cyclic vectors spanning $S_h$,
and as the zero map on the cyclic vectors spanning $S_{h'}$.
The simple modules $S_h$
and $S_{h'}$ are therefore non-isomorphic as $A$-modules.

\begin{proposition}\label{prop:composeries}
The $A$-module $V$ has a composition series
in which the simple $A$-module~$S_h$ appears
with multiplicity $h$.
\end{proposition}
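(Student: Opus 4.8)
The plan is to refine the chain of $A$-submodules
\[ 0 = V_0 \subseteq V_1 \subseteq \cdots \subseteq V_r = V, \]
where $r = \lambda(1)$ is the largest part of $\lambda$ (so that $M^r = 0$ and $V_r = V$), and to count, for each relevant $h$, how many times $S_h$ occurs among the composition factors of the successive quotients $V_1/V_0, V_2/V_1, \ldots, V_r/V_{r-1}$. The heart of the argument is the claim that, for each $h$ with $1 \le h \le r$, the module $V_h/V_{h-1}$ has a composition series in which each simple module $S_j$ with $j \ge h$ (and $m_j \neq 0$) occurs exactly once, and no other simple module occurs. Granting this, $S_j$ occurs in $V_h/V_{h-1}$ for precisely the values $h = 1, 2, \ldots, j$, hence with total multiplicity $j$ across a common refinement of the above chain --- which is a composition series of $V$. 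That is the assertion of the proposition.

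To prove the claim I would use the homomorphism of $A$-modules induced by multiplication by $M$,
\[ \phi_h \colon V_{h+1}/V_h \longrightarrow V_h/V_{h-1}, \qquad v + V_h \longmapsto vM + V_{h-1}. \]
This is well defined since $V_{h+1}M \subseteq V_h$ and $V_h M \subseteq V_{h-1}$; it is $A$-linear since $M$ commutes with every element of $A$; and it is injective, since $vM \in V_{h-1}$ forces $vM^h = 0$, i.e. $v \in V_h$. Its image is $(V_{h+1}M + V_{h-1})/V_{h-1}$, so there results a short exact sequence of $A$-modules
\[ 0 \longrightarrow V_{h+1}/V_h \longrightarrow V_h/V_{h-1} \longrightarrow V_h/(V_{h+1}M + V_{h-1}) \longrightarrow 0. \]
By Proposition~\ref{prop:simples} the right-hand term is the simple module $S_h$ of dimension $m_h$ when $m_h \neq 0$; when $m_h = 0$ the spanning set of cyclic vectors of height $h$ appearing in the proof of that proposition is empty, so $V_h = V_{h+1}M + V_{h-1}$ and the right-hand term is zero.

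With this in hand the claim follows by downward induction on $h$. For $h = r$ we have $V_{r+1} = \ker M^{r+1} = V = V_r$, so the left-hand term of the sequence vanishes and $V_r/V_{r-1} \cong S_r$ (and $m_r \ge 1$, as $r$ is a part of $\lambda$). For the inductive step, if the composition factors of $V_{h+1}/V_h$ are exactly the $S_j$ with $j \ge h+1$, each once, then the short exact sequence shows that those of $V_h/V_{h-1}$ are these together with one further copy of $S_h$ precisely when $m_h \neq 0$, as required.

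I do not anticipate a serious obstacle: the argument is essentially a bookkeeping exercise on the filtration $(V_h)$ once the map $\phi_h$ is introduced. The one point demanding a little care is the identification of the cokernel of $\phi_h$ with $S_h$ (or with $0$), which rests on Proposition~\ref{prop:simples} together with the observation that that proposition's complement is trivial exactly when $m_h = 0$.
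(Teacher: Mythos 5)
Your argument is correct, and it takes a genuinely different route from the paper's. You refine the kernel filtration $0=V_0\subseteq V_1\subseteq\cdots\subseteq V_r=V$ and compute each layer exactly, by downward induction, using the injective $A$-homomorphism $V_{h+1}/V_h\to V_h/V_{h-1}$ induced by $M$ together with the identification of its cokernel with $S_h$ (or with $0$ when $m_h=0$); summing over the layers then gives the multiplicity of $S_h$ as exactly $h$, with no further counting needed. The paper argues differently: for each $h$ it exhibits $h$ explicit subquotients
$V_hM^{i-1}/(V_{h+1}M^i+V_{h-1}M^{i-1})$, all isomorphic to $S_h$ via non-zero maps induced by $M$ between simple modules, concludes from the Jordan--H\"older theorem that $S_h$ occurs with multiplicity \emph{at least} $h$, and then forces equality (and rules out any other composition factors) by the dimension count $\dim V=\sum_h h\,m_h=\sum_h h\dim S_h$. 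The trade-off is this: your approach yields the exact multiplicities directly and, as a bonus, identifies the composition factors of every layer $V_h/V_{h-1}$, but it leans on the additional fact that $V_h=V_{h+1}M+V_{h-1}$ whenever $m_h=0$ --- you justify this by appealing to the complement spanned by cyclic vectors of height $h$ inside the proof of Proposition~\ref{prop:simples}, which is legitimate (and easy to verify directly on a Jordan basis), but it is a detail of that proof rather than its statement, so it deserves the explicit flag you give it. The paper's route avoids that case analysis entirely, at the cost of the closing dimension count and of checking that its induced maps between simple subquotients are well defined and non-zero.
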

\begin{proof}
The action of the nilpotent matrix $M$ on $V_h$ induces a non-zero homomorphism
of simple $A$-modules
\[ \frac{ V_hM^{i-1}}{ V_{h+1}M^i +  V_{h-1}M^{i-1}}
\longrightarrow
\frac{V_hM^i}{V_{h+1}M^{i+1} + V_{h-1}M^i}
\]
for each $i$ such that $1 \le i \le h-1$.
This gives us $h$ distinct
composition factors of $V_h$, each isomorphic to $S_h$.
It now follows from the Jordan--H{\"o}lder theorem that
in any composition series of $V$,
the simple
module $S_h$ appears at least with multiplicity $h$. Finally,
by comparing dimensions using the equation
\[ \dim V = n = \sum_h h m_h = \sum_h h \dim S_h, \]
we see that equality holds for each $h$,
and that the $A$-module $V$ has no other composition factors.
\end{proof}

\begin{proposition}
\label{prop:nilpotentpowers}
If $M$ is nilpotent, and has part-size invariant $k$, then
the determinants that appear in $\Cent M$ are
$k$-th powers in $\F_q$.
\end{proposition}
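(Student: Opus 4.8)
The plan is to exploit the composition series of $V$ as a right $A$-module established in Proposition~\ref{prop:composeries}. Since the determinant of $Y \in A$ acting on $V$ is the product of the determinants of $Y$ acting on the composition factors, and $S_h$ occurs with multiplicity $h$, we have
\[
\det Y = \prod_{h : m_h \neq 0} \bigl(\det(Y \mid S_h)\bigr)^{h}
\]
for every $Y \in A$. Here $\det(Y \mid S_h)$ is the determinant of the induced action of $Y$ on the simple module $S_h$, which by Proposition~\ref{prop:simples} is an $m_h$-dimensional $\F_q$-vector space on which $A$ acts as a full matrix algebra $\Mat_{m_h}(\F_q)$.

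The first step is to record that every $h$ appearing in this product is divisible by the part-size invariant $k$: by definition $k = \hcf\{h : m_h \neq 0\}$, so in particular $k \mid h$ whenever $m_h \neq 0$. Hence each factor $(\det(Y\mid S_h))^h$ is a $k$-th power in $\F_q$, and therefore so is $\det Y$. This gives the containment: every determinant occurring in $\Cent M$ is a $k$-th power. The second step is to show the reverse containment, that every $k$-th power is realised. Fix $h_0$ with $m_{h_0}\neq 0$; since $A$ acts on $S_{h_0}$ as the full algebra $\Mat_{m_{h_0}}(\F_q)$, by Lemma~\ref{lemma:cyclic} we may choose $Y\in A$ acting as a prescribed element of $\GL_{m_{h_0}}(\F_q)$ on the cyclic vectors spanning $S_{h_0}$ and as the identity on all other cyclic vectors; such a $Y$ acts as the identity on every other composition factor, so $\det Y = (\det(Y\mid S_{h_0}))^{h_0}$, and $\det(Y\mid S_{h_0})$ ranges over all of $\F_q^\times$ as we vary the choice. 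Thus the $h_0$-th powers all occur as determinants, for each $h_0$ with $m_{h_0}\neq 0$. Taking suitable products of such $Y$'s, the set of determinants occurring in $\Cent M$ is closed under multiplication and contains all $h$-th powers for every $h$ with $m_h\neq0$; since $k=\hcf$ of these $h$, a B\'ezout-type argument shows this multiplicative set is exactly the group of $k$-th powers in $\F_q$.

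The one point requiring a little care is the B\'ezout step over $\F_q^\times$: from the fact that the realisable determinants form a subgroup of $\F_q^\times$ containing the $h$-th powers for all $h$ with $m_h\neq0$, I want to conclude it contains the $k$-th powers. Writing $\F_q^\times$ as cyclic of order $q-1$, the $h$-th powers form the subgroup of index $\hcf(h,q-1)$, and the subgroup generated by all of these has index $\hcf(k,q-1)$ — which is precisely the index of the group of $k$-th powers. So the realisable determinants contain, hence equal, the $k$-th powers. I expect this finite-group bookkeeping to be the only mildly delicate part; the structural input from Propositions~\ref{prop:simples} and~\ref{prop:composeries} does the real work.

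\begin{proof}
Let $A = \Cent M$ and $V = \F_q^n$ regarded as a right $A$-module. For $Y \in A$, the determinant of $Y$ acting on $V$ equals the product of the determinants of $Y$ acting on the composition factors of any composition series of $V$. By Proposition~\ref{prop:composeries}, the simple module $S_h$ occurs with multiplicity $h$ for each $h$ with $m_h \neq 0$, and these are all the composition factors; hence
\[
\det Y = \prod_{h\,:\, m_h \neq 0} \delta_h(Y)^{h}, \qquad \text{where } \delta_h(Y) := \det(Y \mid S_h).
\]
By definition, the part-size invariant $k$ is the highest common factor of $\{h : m_h \neq 0\}$, so $k \mid h$ for every $h$ occurring in the product. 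Therefore $\det Y$ is a $k$-th power in $\F_q$ for every $Y \in A$.

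Conversely, let $G \le \F_q^\times$ be the set of determinants that occur in $A$; since $A$ is an algebra and determinants are multiplicative, $G$ is a subgroup of $\F_q^\times$. Fix $h_0$ with $m_{h_0}\neq 0$. By Proposition~\ref{prop:simples}, $A$ acts on the $m_{h_0}$-dimensional space $S_{h_0}$ as the full matrix algebra. Given any $t \in \F_q^\times$, choose $B \in \GL_{m_{h_0}}(\F_q)$ with $\det B = t$; by Lemma~\ref{lemma:cyclic}, there is $Y \in A$ acting as $B$ on the cyclic vectors of height $h_0$ spanning (a complement lifting) $S_{h_0}$ and as the identity on all cyclic vectors of other heights. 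Then $Y$ acts as the identity on every composition factor except $S_{h_0}$, so $\det Y = \delta_{h_0}(Y)^{h_0} = t^{h_0}$. Hence $G$ contains every $h_0$-th power in $\F_q$, for each $h_0$ with $m_{h_0} \neq 0$.

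Finally, $\F_q^\times$ is cyclic of order $q-1$, and the subgroup of $h$-th powers has index $\hcf(h, q-1)$. The subgroup generated by the $h$-th powers over all $h$ with $m_h \neq 0$ therefore has index $\hcf\bigl(\{h : m_h\neq 0\}, q-1\bigr) = \hcf(k, q-1)$, which is the index of the subgroup of $k$-th powers. As $G$ contains all these $h$-th powers, $G$ contains the subgroup of $k$-th powers; combined with the first paragraph, $G$ is exactly the set of $k$-th powers in $\F_q$.
\end{proof}
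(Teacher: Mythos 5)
Your first paragraph is precisely the paper's proof of this proposition: the statement only asserts the containment that every determinant occurring in $\Cent M$ is a $k$-th power, and your argument via the composition series of Proposition~\ref{prop:composeries}, the factorisation of $\det Y$ over the composition factors, and the divisibility $k \mid h$ for every $h$ with $m_h \neq 0$ is exactly the argument given there. The second half of your proof, showing that every $k$-th power is actually realised, is not needed for this proposition but is correct, and it takes a genuinely different route from the paper: the paper obtains realisability for an arbitrary (not necessarily nilpotent) matrix in Propositions~\ref{prop:cyclicpowers} and~\ref{prop:existenceofpowers}, by producing via Lemma~\ref{lemma:polynomialconstant} an irreducible polynomial of the right degree with prescribed constant term and invoking Theorem~\ref{thm:polytypes}, whereas you stay inside the centralizer of the nilpotent matrix itself, using Lemma~\ref{lemma:cyclic} to realise any prescribed element of $\GL_{m_{h_0}}(\F_q)$ on the cyclic vectors of a fixed height $h_0$ (which then acts by that element on each of the $h_0$ copies of $S_{h_0}$ and trivially on all other factors, giving determinant $t^{h_0}$), and finishing with the index computation in the cyclic group $\F_q^\times$. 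This is a clean, self-contained alternative for the nilpotent case, though it does not by itself replace Proposition~\ref{prop:existenceofpowers}, which the paper needs for general $M$. One small wording point: the determinants occurring in $\Cent M$ include $0$ (for instance $\det M = 0$), so your $G$ should be the set of \emph{non-zero} determinants; since $0$ is itself a $k$-th power this does not affect either containment.
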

\begin{proof}
Given $Y \in \Cent M$ let $Y_h$ denote the matrix in $\Mat_{m_h}(\F_q)$
which gives the action of~$Y$ on the simple $A$-module $S_h$.
Using the composition series given by the previous theorem
to compute $\det Y$ we get
\[ \det Y = \prod_{h \atop m_h \not= 0} (\det Y_h )^{h}. \]
Since the part-size invariant of $m$ is the highest common factor
of the set $\{h \mid m_h \not= 0 \}$, we see that $\det Y$
is a $k$-th power.
\end{proof}

It is worth remarking that it is also possible
to prove Proposition~\ref{prop:composeries}
in a way that gives the required composition series in an explicit
form. We have avoided this approach in order to keep the notation
as simple as possible. The following example indicates how
to construct a suitable basis of $V$ in a small case.
\begin{example}
Let  $M \in \Mat_5(\F_q)$ be a nilpotent matrix in the similarity
class $N(2,2,1)$. Let $u_1, u_2$ be cyclic vectors of $M$ of height $2$,
and let $v$ be a cyclic vector of $M$ of height $1$. Then
with respect to the basis $u_1, u_2, v, u_1M, u_2M$ of $\F_q^5$,
the centralizer of $M$ consists of all matrices of the form
\[ \left( \begin{matrix} \alpha & \beta & \star & \star & \star \\
                         \gamma & \delta & \star & \star & \star \\
                               &      & \zeta & \star & \star \\
                             &      &     & \alpha & \beta \\
                              &      &      & \gamma & \delta
                         \end{matrix}\right) \]
where gaps denote zero entries, and $\star$ is used to denote an entry we have no need to specify explicitly.
The key to obtaining this matrix in the required form is to order the elements of the 
basis
correctly. The following principles determine a suitable ordering on the basis: elements come
in decreasing order of height; cyclic vectors come first among elements of the same height;
if $b_i$ comes before $b_j$ then $b_iM$ comes before $b_jM$.
\end{example}

\subsection{Proof of Theorem~\ref{thm:determinants}}\label{subsec:detproof}

The proof has two steps. We first show
that if $M$ is a matrix with entries
in $\F_q$ and part-size invariant $k$,
then every $k$-th power in $\F_q$ appears
as the determinant of a matrix in $\Cent M$.
In the second, we use Proposition~\ref{prop:nilpotentpowers}
to show that no other powers can appear.

We begin with the following lemma.

\begin{lemma}\label{lemma:polynomialconstant}
Let $S(k)$ be the set of $k$-th powers in $\F_q$. Let $d\in\N$ and $\theta\in\F_q^{\times}$. Then the number of
irreducible polynomials of degree $d$ over $\F_q$ with
constant term $\theta$ is
\[
\frac{1}{d(q-1)}\sum_{k|d \atop S(k)\ni\theta}\mu(k)\hcf(q-1,k)(q^{d/k}-1).
\]
This number is non-zero for all choices of $d$ and $\theta$ and for all $q$.
\end{lemma}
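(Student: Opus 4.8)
The plan is to prove the counting formula by a Möbius-inversion argument on the classical count of irreducible polynomials, organised according to the multiplicative order of the constant term. Recall first the standard fact that the number of monic irreducible polynomials of degree $d$ over $\F_q$ is $\frac{1}{d}\sum_{k \mid d}\mu(k)(q^{d/k}-1)$; this follows from Möbius inversion applied to $\sum_{k \mid d}k\cdot(\text{number of monic irreducibles of degree }k)=q^d-1$. I would refine this by sorting the irreducibles according to the coset of $(\F_q^\times)^d$ to which their constant term belongs, exploiting the identity $x^d-c = \prod_{i}(\text{conjugate factors})$ and the fact that the constant term of a monic irreducible of degree $e$ is $(-1)^e N_{\F_{q^e}/\F_q}(\alpha)$ for $\alpha$ a root.

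The key step is a symmetry observation: for a fixed degree $d$, the map sending an irreducible polynomial $f(x)=\prod(x-\alpha_i)$ to the polynomial with roots $\{\zeta\alpha_i\}$, where $\zeta$ runs over $\F_q^\times$, permutes the irreducibles of degree $d$ and multiplies the constant term by $\zeta^d$. Hence the constant terms of degree-$d$ irreducibles are equidistributed among the cosets of the subgroup $(\F_q^\times)^d$ of $\F_q^\times$, which has order $(q-1)/\hcf(q-1,d)$. More precisely, I would count, for each divisor $k \mid d$, the irreducibles $f$ of degree $d$ whose constant term is a $k$-th power but not a $k'$-th power for any $k' \mid d$ strictly larger; the combinatorics of the exponent of the constant term inside $\F_q^\times$ is what produces the factor $\hcf(q-1,k)$ and forces the condition $S(k)\ni\theta$. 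Carrying out the inclusion–exclusion over these divisors, together with the degree-$d/k$ irreducible count weighted by $\mu(k)$, yields exactly the displayed sum $\frac{1}{d(q-1)}\sum_{k\mid d,\, S(k)\ni\theta}\mu(k)\hcf(q-1,k)(q^{d/k}-1)$. A clean way to execute this is via characters: write the indicator of ``constant term equals $\theta$'' as $\frac{1}{q-1}\sum_{\chi}\chi(\theta)^{-1}\chi(c(f))$ over multiplicative characters $\chi$ of $\F_q^\times$, sum over all degree-$d$ irreducibles, and evaluate each character sum. The sum $\sum_{f}\chi(c(f))$ over irreducibles is a Gauss-type sum that vanishes unless $\chi^d$ is trivial, i.e.\ unless the order of $\chi$ divides $\hcf(q-1,d)$; collecting the surviving terms and reorganising by the order $k$ of $\chi$ recovers the formula, since $\chi(\theta)=1$ for all $\chi$ of order dividing $k$ precisely when $\theta \in S(k)$.

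For the non-vanishing claim, the character-sum formulation makes it transparent: the expression equals $\frac{1}{q-1}\sum_{\chi^d=1}\chi(\theta)^{-1}A_\chi$ where $A_\chi = \sum_{f}\chi(c(f))$, and one checks $A_\chi \ge 0$ with $A_{\chi_0}>0$ dominating. Alternatively, working directly with the displayed sum, the $k=1$ term contributes $\frac{q^d-1}{d(q-1)}$, which grows like $q^{d-1}/d$, while every other term is bounded in absolute value by $\frac{1}{d(q-1)}\hcf(q-1,k)(q^{d/k}-1) \le \frac{q^{d/2}}{d}$ in total, so the $k=1$ term strictly dominates for $q^{d-1} > $ (a modest bound in $q^{d/2}$); the finitely many remaining small cases of $(q,d)$ can be verified directly, or by noting that an irreducible with prescribed constant term always exists by the equidistribution argument of the previous paragraph. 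I expect the main obstacle to be the bookkeeping in the Möbius/inclusion–exclusion step — specifically, justifying that the number of degree-$d$ irreducibles with constant term of exact $k$-th-power type is correctly captured by the $\mu(k)\hcf(q-1,k)(q^{d/k}-1)$ summand — and the character-sum approach is the device I would use to sidestep the messier combinatorial version.
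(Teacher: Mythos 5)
Your plan is genuinely different from the paper's: the paper does not prove the counting formula at all (it cites Britnell, \S 5.2, for the count) and proves only the existence assertion, by a short direct argument --- take a generator $\alpha$ of $\F_{q^d}^\times$, note that $\beta=\mathfrak{n}(\alpha)$ generates $\F_q^\times$, write $(-1)^d\theta=\beta^c$ with $0<c<q$, and observe that $\alpha^c$ has too large a multiplicative order to lie in a proper subfield, so its minimum polynomial has degree $d$ and constant term $\theta$. The counting half of your sketch is sound in outline: $\sum_f\chi(c(f))$ does vanish unless $\chi^d$ is trivial, and combining the M\"obius sum over subfields with the norm's surjectivity and the identity $\sum_{\chi^k=1}\chi(\theta)=\hcf(q-1,k)$ or $0$ according as $\theta\in S(k)$ or not, one does recover the displayed formula (one point you elide: the constant term is $(-1)^d$ times the norm of a root, and $(-1)^d\theta\in S(k)$ is equivalent to $\theta\in S(k)$ only because $k\mid d$).

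The genuine gaps are in the non-vanishing step, which is the part the paper actually proves and the part its applications need. First, the claim that ``one checks $A_\chi\ge 0$'' is false: for $q=5$, $d=2$ and $\chi$ the quadratic character, there are $2$ irreducible quadratics with each square constant term and $3$ with each non-square one, so $A_\chi=\sum_f\chi(c(f))=-2$. Second, the fallback via ``equidistribution'' does not work: the scaling $f(x)\mapsto \zeta^{d}f(x/\zeta)$ shows only that the count is constant on each coset of $(\F_q^\times)^d$ (the example above shows distinct cosets get distinct counts), and constancy on cosets does not exclude a coset with count zero --- which is exactly what must be excluded. Third, the remaining route (the $k=1$ term dominates outside finitely many $(q,d)$, then check the rest) is viable but incomplete as written: the exceptional small cases, e.g.\ $q=2,3$ with moderate $d$, are precisely where existence is least obvious, and they are not verified; also your stated bound on the error terms is garbled (a per-term bound is conflated with a bound on the total). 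By contrast, the paper's primitive-element/norm argument gives existence uniformly in $q$, $d$ and $\theta$, with no estimates and no case analysis; if you want to keep your character-theoretic proof of the formula, you should still prove existence either by completing the small-case check honestly or by adopting an argument of the paper's kind.
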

\begin{proof}
We give an elementary proof of the existence of a polynomial with degree $d$ and constant term $\theta$. For the
number of polynomials, see for instance \cite[\S 5.2]{Britnell}.

Let $\alpha$ be a generator of the multiplicative group $\F_{q^d}^\times$, and let $\beta=\mathfrak{n}(\alpha)$
where $\mathfrak{n} : \F_{q^d}^{\times} \rightarrow \F_q^{\times}$ is the norm homomorphism. It is clear that
$\beta$ generates $\F_q^\times$. Let $c$ be such that $0<c<q$ and $(-1)^d\theta=\beta^c$. Since $\F_{q^d}$ has
no proper subfield of index less than $q$, and since the multiplicative order of $\alpha^c$ is at least
$(q^d-1)/c$, it is easy to see that $\alpha^c$ cannot lie in a proper subfield of~$\F_{q^d}$. It follows that the
minimum polynomial of $\alpha^c$ over $\F_q$ has degree $d$ and constant term $\theta$, as required.
\end{proof}

\begin{proposition}\label{prop:cyclicpowers}
Let $P$ be a matrix with class type $d^{(j)}$. Then for any $\theta\in\F_q$, there exists a matrix in $\Cent P$
with determinant $\theta^j$.
\end{proposition}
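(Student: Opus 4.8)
The plan is to use Theorem~\ref{thm:conjugatecentralizer} to reduce the statement to exhibiting a single conveniently chosen matrix of class type $d^{(j)}$ whose centralizer contains an element of determinant $\theta^j$. Indeed, if $X$ and $X'$ have the same class type, then by Theorem~\ref{thm:conjugatecentralizer} the algebras $\Cent X$ and $\Cent X'$ are conjugate in $\GL_n(\F_q)$, and conjugation does not alter the set of determinants that occur; hence the set of determinants appearing in $\Cent P$ depends only on the class type $d^{(j)}$, and we are free to replace $P$ by any matrix of this type, including one depending on $\theta$.

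First dispose of the case $\theta = 0$: the zero matrix lies in $\Cent P$ and has determinant $0 = 0^j$. Now suppose $\theta \neq 0$. Recall that the companion matrix of a monic polynomial $p$ of degree $m$ is a single cyclic block whose minimal and characteristic polynomials both equal $p$, and whose determinant is $(-1)^m p(0)$. Using Lemma~\ref{lemma:polynomialconstant}, choose an irreducible polynomial $f$ over $\F_q$ of degree $d$ with constant term $(-1)^{dj}\theta$; this is possible since $(-1)^{dj}\theta \in \F_q^{\times}$. Let $P_0$ be the companion matrix of $f^j$. Then $P_0$ has cycle type $f^{(j)}$, hence class type $d^{(j)}$, and certainly $P_0 \in \Cent P_0$. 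Since the constant term of $f^j$ is $f(0)^j = \bigl((-1)^{dj}\theta\bigr)^j$, we obtain
\[
\det P_0 = (-1)^{dj}\bigl((-1)^{dj}\theta\bigr)^j = (-1)^{dj(1+j)}\theta^j = \theta^j ,
\]
the last equality because $j(1+j)$, a product of consecutive integers, is even. By the first paragraph this yields the required element of $\Cent P$.

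I anticipate no serious obstacle; the only point needing a little care is the sign bookkeeping in the determinant of the companion matrix, which dictates the choice of constant term for $f$. A self-contained alternative, avoiding Theorem~\ref{thm:conjugatecentralizer}, runs as follows: since $P$ is a single cyclic block it is nonderogatory, so $\Cent P = \F_q[P] \cong \F_q[x]/(f^j)$ where $f$ is the irreducible polynomial of $P$; filtering $\F_q[x]/(f^j)$ by the ideals $(f^i)$ and noting that multiplication by $g$ acts on each quotient $(f^i)/(f^{i+1}) \cong \F_{q^d}$ as multiplication by $g(\alpha)$, where $\alpha$ is a root of $f$, one finds $\det g(P) = N_{\F_{q^d}/\F_q}\bigl(g(\alpha)\bigr)^j$ for every $g \in \F_q[x]$. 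Since the norm is surjective onto $\F_q^{\times}$ and $\F_q(\alpha) = \F_{q^d}$, one may choose $g$ with $N_{\F_{q^d}/\F_q}(g(\alpha)) = \theta$, and then $g(P)$ is the desired matrix.
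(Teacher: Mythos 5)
Your main argument is correct and is essentially the paper's route: both hinge on Lemma~\ref{lemma:polynomialconstant} to produce an irreducible $f$ of degree $d$ with a prescribed constant term, and then on the type machinery of \S 2 to transfer the resulting determinant into $\Cent P$ --- the paper does this by citing Theorem~\ref{thm:polytypes} (classes of the same type commute, so $P$ commutes with an element of cycle type $f^{(j)}$), while you instead invoke Theorem~\ref{thm:conjugatecentralizer} to say the set of determinants occurring in the centralizer is a class-type invariant and then exhibit the companion matrix of $f^j$; these are interchangeable packagings of the same idea, and your sign bookkeeping ($f(0)=(-1)^{dj}\theta$, with $j(j+1)$ even) checks out, as does the explicit treatment of $\theta=0$. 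Your closing alternative is a genuinely different and more self-contained argument: since $P$ is nonderogatory, $\Cent P=\F_q[P]\cong\F_q[x]/(f^j)$, and filtering by the ideals $(f^i)$ gives $\det g(P)=N_{\F_{q^d}/\F_q}\bigl(g(\alpha)\bigr)^j$, whence surjectivity of the norm finishes the proof. This version avoids \S 2 entirely and yields strictly more information (an exact determinant formula for every element of the centralizer, which in particular re-proves the ``only $j$-th powers occur'' half for this primary type), at the cost of being special to the cyclic case; the paper's approach, by contrast, recycles machinery it needs anyway and generalizes painlessly to the block-by-block argument of Proposition~\ref{prop:existenceofpowers}.
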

\begin{proof}
We may assume that $\theta$ is non-zero. By Lemma \ref{lemma:polynomialconstant} there exists an irreducible
polynomial $f$ over $\F_q$ with degree $d$ and constant term $(-1)^d\theta$. Let $C$ be the similarity class
containing $P$, and let $D$ be the class of matrices with cycle type $f^{(j)}$. Since $C$ and $D$ have the same
class type, it follows from Theorem \ref{thm:polytypes} that they commute. Therefore $P$ commutes with an
element of $D$. It is clear from the construction of $D$ that its elements have determinant $\theta^j$, as
required.
\end{proof}

We now extend Proposition~\ref{prop:cyclicpowers} to a general matrix.


\begin{proposition}
\label{prop:existenceofpowers} If $M$ is a matrix with part-size invariant $k$,
then for any $\zeta\in\F_q$,
there exists a matrix in $\Cent M$ with determinant $\zeta^k$.
\end{proposition}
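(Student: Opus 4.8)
The plan is to reduce the general case to the primary case already handled in Proposition~\ref{prop:cyclicpowers}, by decomposing $M$ into primary blocks and then each primary block into cyclic blocks. First I would write $M = \diag(M_1, \ldots, M_t)$, where each $M_i$ is primary, say of class type $d_i^{\lambda_i}$. Then $\Cent M$ contains $\Cent M_1 \oplus \cdots \oplus \Cent M_t$ (acting block-diagonally), so it suffices to produce, in each $\Cent M_i$, a matrix of determinant $\zeta^{k_i}$ where $k_i$ is the part-size invariant of $M_i$: taking the block-diagonal matrix with these entries gives determinant $\zeta^{k_1 + \cdots + k_t}$, and since $k = \hcf(k_1, \ldots, k_t)$ divides each $k_i$, by choosing the entries appropriately (replacing $\zeta$ by a suitable power in all but one block, or using that every $k_i$-th power is a $k$-th power combined with a Bézout-type argument) we can realise any $\zeta^k$. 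So I have reduced to the case that $M$ is primary.

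Now suppose $M$ is primary of type $d^\lambda$ with $\lambda = (\lambda(1), \ldots, \lambda(r))$, so $k = \hcf(\lambda(1), \ldots, \lambda(r))$. Decompose $M = \diag(P_1, \ldots, P_r)$ into cyclic blocks, where $P_s$ has class type $d^{(\lambda(s))}$. Again $\Cent M$ contains the block-diagonal subalgebra $\Cent P_1 \oplus \cdots \oplus \Cent P_r$. By Proposition~\ref{prop:cyclicpowers}, for any $\theta_s \in \F_q$ there is a matrix in $\Cent P_s$ of determinant $\theta_s^{\lambda(s)}$; the block-diagonal matrix assembled from these has determinant $\prod_s \theta_s^{\lambda(s)}$. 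The task is therefore to show that, as the $\theta_s$ range over $\F_q$, the products $\prod_s \theta_s^{\lambda(s)}$ run over precisely the $k$-th powers (at least, over all of them). Every such product is clearly a $k$-th power since $k \mid \lambda(s)$ for all $s$. Conversely, given $\zeta \in \F_q^\times$, I want $\theta_s$ with $\prod_s \theta_s^{\lambda(s)} = \zeta^k$: writing $\theta_s = \eta^{c_s}$ for a fixed generator $\eta$ of $\F_q^\times$ and $\zeta = \eta^m$, this becomes the congruence $\sum_s c_s \lambda(s) \equiv km \pmod{q-1}$, which is solvable because $\hcf(\lambda(1), \ldots, \lambda(r)) = k$ divides $km$. (The case $\zeta = 0$ is trivial: take one of the cyclic-block contributions to be $0$.)

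The main obstacle, and the only place requiring genuine care, is the very first reduction step: verifying that the part-size invariant $k$ of the whole matrix really is $\hcf(k_1, \ldots, k_t)$ over the primary components, and then handling the arithmetic of combining the exponents $k_i$ across blocks to land on an arbitrary $k$-th power rather than merely on $\hcf(k_i)$-th powers of various elements. This is resolved by the same Bézout observation used in the cyclic-block step: since $\hcf(k_1, \ldots, k_t) = k$, there are integers with $\sum_i a_i k_i = k$, and working multiplicatively in $\F_q^\times$ (again with a generator $\eta$) one solves the corresponding linear congruence modulo $q-1$ to choose the $\zeta_i$ with $\prod_i \zeta_i^{k_i} = \zeta^k$. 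Everything else is bookkeeping: the containment of block-diagonal centralizer subalgebras inside $\Cent M$ is immediate, and determinants of block-diagonal matrices multiply. Thus the proof is essentially a two-layer application of Proposition~\ref{prop:cyclicpowers} glued together by elementary modular arithmetic.
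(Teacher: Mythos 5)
Your proof is correct and takes essentially the same approach as the paper: decompose $M$ into cyclic blocks, apply Proposition~\ref{prop:cyclicpowers} to each block to realise determinants $\theta_i^{h_i}$ block-diagonally inside $\Cent M$, and combine them by a B\'ezout argument using $k=\hcf$ of the part sizes. The only difference is organisational: the paper skips your intermediate primary-component layer, decomposing $M$ directly into all its cyclic blocks and finishing with a single B\'ezout step ($\theta_i=\zeta^{a_i}$ with $\sum_i a_ih_i=k$), whereas you run the same argument twice, once within each primary component and once across them.
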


\begin{proof}
Let $P_1,\dots, P_s$ be the distinct cyclic blocks of $M$; so $M$ is conjugate to $\bigoplus_iP_i$. For each
$i$ let the class type of the block $B_i$ be $d_i^{h_i}$. By Proposition~\ref{prop:cyclicpowers}, for any
scalars~$\theta_i$ that we choose, there exist matrices $X_1,\dots,X_s$ such that $X_i\in\Cent B_i $ for all
$i$, and $\det X_i=\theta_i^{h_i}$. Thus~$M$ commutes with a conjugate of the matrix $\diag(X_1,\dots,X_s)$,
which has determinant~$\prod_i \theta_i^{h_i}$.

It will therefore be enough to show that there exist non-zero scalars $\theta_1,\dots,\theta_s$ such that~$\prod_i \theta_i^{h_i}=\zeta^k$. But we know that $k=\hcf(h_1,\dots, h_s)$, and so there exist integers $a_i$
such that $k=\sum_ia_ih_i$; it follows that we can simply take $\theta_i=\zeta^{a_i}$ for all $i$.
\end{proof}

We now turn to the second step in the proof of Theorem~\ref{thm:determinants}.
\begin{proposition}\label{prop:powers}
Let $M$ be a matrix with part-size invariant $k$. The determinant
of an element of $\Cent M$ is a $k$-th power in $\F_q$.
\end{proposition}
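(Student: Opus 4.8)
The plan is to reduce the statement to its nilpotent special case, Proposition~\ref{prop:nilpotentpowers}, by producing a single nilpotent matrix $N$ which is a polynomial in $M$ and has the same part-size invariant $k$ as $M$. Granting such an $N$, observe that any $Y\in\Cent M$ commutes with every polynomial in $M$, so $Y\in\Cent N$; then Proposition~\ref{prop:nilpotentpowers}, applied to $N$, shows that $\det Y$ is a $k$-th power in $\F_q$. (When $Y$ is singular, $\det Y=0$ is trivially a $k$-th power, so we lose nothing by assuming $Y$ invertible.)

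To construct $N$, pass to the primary decomposition $V=V_1\oplus\cdots\oplus V_t$ of the underlying space with respect to $M$, so that $M$ restricted to $V_i$ has primary cycle type $f_i^{\lambda_i}$ with $\deg f_i=d_i$, where the $f_i$ are the distinct irreducible factors of the minimal polynomial of $M$; thus the class type of $M$ is $d_1^{\lambda_1}\cdots d_t^{\lambda_t}$ and $k$ is the highest common factor of all the parts of $\lambda_1,\dots,\lambda_t$. By the Chinese Remainder Theorem, choose $F\in\F_q[x]$ with $F\equiv f_i\pmod{f_i^{\lambda_i(1)}}$ for every $i$. Since $f_i^{\lambda_i(1)}$ is the minimal polynomial of the restriction $M|_{V_i}$, this congruence forces $F(M|_{V_i})=f_i(M|_{V_i})$, which by Proposition~\ref{prop:nilpotentpartsizes} is nilpotent of type $1^{d_i\lambda_i}$. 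Hence $N:=F(M)$ is nilpotent, with associated partition $d_1\lambda_1+\cdots+d_t\lambda_t$.

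It remains only to compare part-size invariants. The operation $\lambda_i\mapsto d_i\lambda_i$ multiplies the multiplicity of each part by $d_i$ but introduces no new part sizes, so the set of distinct parts of $d_1\lambda_1+\cdots+d_t\lambda_t$ is precisely the union of the sets of parts of $\lambda_1,\dots,\lambda_t$. Therefore the part-size invariant of $N$ is the highest common factor of all the parts of $\lambda_1,\dots,\lambda_t$, which is exactly $k$, and the reduction is complete. I do not expect any genuine obstacle here: the one point to watch is the partition bookkeeping, in particular the fact that applying an irreducible polynomial to a primary matrix (via Proposition~\ref{prop:nilpotentpartsizes}) alters only the multiplicities of parts, not the parts themselves, so that the part-size invariant is preserved on the nose. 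All the real content is carried by Propositions~\ref{prop:nilpotentpartsizes} and~\ref{prop:nilpotentpowers}.
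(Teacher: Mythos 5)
Your proof is correct, and it takes a mildly but genuinely different route from the paper's. The paper also passes to the primary decomposition $V=\bigoplus_f V_f$, but it exploits the fact that each $V_f$ is invariant under $\Cent M$ to factor $\det Y=\prod_f \det Y_f$, and then applies Proposition~\ref{prop:nilpotentpowers} separately to each nilpotent matrix $f(M_f)$, whose part-size invariant is a multiple of $k$, so that each factor (hence the product) is a $k$-th power. You instead glue the polynomials $f_i$ by the Chinese Remainder Theorem---the same device used in the proof of Theorem~\ref{thm:polytypes}---to manufacture a single nilpotent matrix $N=F(M)$ that is polynomial in $M$ and whose partition $d_1\lambda_1+\cdots+d_t\lambda_t$ has part-size invariant exactly $k$; since $\Cent M\subseteq\Cent N$, one application of Proposition~\ref{prop:nilpotentpowers} finishes the argument. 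Your version avoids any appeal to the $\Cent M$-invariance of the summands and to the factorization of the determinant, needing only the trivial inclusion $\Cent M\subseteq\Cent F(M)$, and its bookkeeping is exact where the paper is slightly loose (the paper's phrase ``the part-size invariant of $f(M_f)$ is $k$'' should read ``is a multiple of $k$'', which is all it uses); the paper's version, in exchange, needs no CRT step and never has to identify the partition of a block-diagonal nilpotent matrix as the multiset union of the blocks' partitions. Your reading of the convention $d\lambda$ (each part repeated $d$ times, so no new part sizes) agrees with the paper's definition, so the key claim that the invariant is preserved under $\lambda_i\mapsto d_i\lambda_i$ is sound, and the CRT congruences $F\equiv f_i \bmod f_i^{\lambda_i(1)}$ are legitimate since the moduli are pairwise coprime.
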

\begin{proof} Let $M$ act on $V = \F_q^n$.
For each irreducible polynomial $f$ over $\F_q$ which divides the minimal polynomial of $M$, let $V_f$ be the
largest subspace of $V$ on which $f(M)$ acts nilpotently. Then $V=\bigoplus V_f$, and each summand $V_f$ is
invariant under $\Cent M$. It follows that if $Y\in\Cent M$ then $\det Y=\prod \det Y_f$, where $Y_f$ is the
restriction of $Y$ to $V_f$. Therefore, it will be sufficient to show that $\det Y$ is a $k$-th power for each
$f$.

Let $\lambda=(h_1,\dots, h_s)$ be the partition associated with a given $f$ in the rational canonical form of
$M$. From the definition of the part-size invariant, each of the parts $h_i$ is divisible by $k$. Let~$M_f$ be
the restriction of $M$ to $V_f$, and let $Y_f\in\Cent M_f$.

By Proposition \ref{prop:nilpotentpartsizes}, $f(M_f)$ is nilpotent with associated partition $d\lambda$, where
$d$ is the degree of $f$. It is clear, then, that the part-size invariant of $f(M_f)$ is 
$k$. Since
$Y_f$ is in the centralizer of $f(M_f)$, it follows from Proposition \ref{prop:nilpotentpowers} that $\det Y_f$
is a $k$-th power in $\F_q$, as required.
\end{proof}

Combining the results of Propositions~\ref{prop:existenceofpowers} and~\ref{prop:powers} gives
Theorem~\ref{thm:determinants}. 

\subsection{Determinants in classes of a given type}\label{subsec:detclasses}

It is natural to ask which determinants are represented among matrices of a given type. This question leads to a
hard problem in arithmetic combinatorics, to which we have been able to find only a partial solution.

It is clear that if $T$ is a type representable over the field $\F_q$, then there is a matrix of type~$T$ with
zero determinant if and only if $T$ has a primary component $1^\lambda$ for some $\lambda$. This leaves us to
decide which non-zero determinants can arise. For primary types this question is easily answered.
\begin{lemma}\label{lemma:detprimary}
Let $\lambda$ be a partition of $k \in \N$, let $d\in\N$, and let $\theta\in\F_q^{\times}$. There is an
invertible matrix over $\F_q$ with type $d^\lambda$ and determinant $\theta$ if and only if $\theta$ is a $k$-th
power in $\F_q^\times$.
\end{lemma}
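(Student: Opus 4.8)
The plan is to reduce the statement to the primary case already in hand and then bootstrap from there. Recall that any matrix $M$ of type $d^\lambda$ decomposes (up to conjugacy) as a direct sum $\bigoplus_j P_j$ of cyclic blocks, where the block $P_j$ has type $d^{(\lambda(j))}$ and hence determinant equal to the constant term of its characteristic polynomial (up to sign); and conversely, given any choice of cyclic blocks of the prescribed sizes with the prescribed associated polynomials, their direct sum has type $d^\lambda$. So the determinants achievable by matrices of type $d^\lambda$ are exactly the products $\prod_j \det P_j$ as the $P_j$ range over matrices of type $d^{(\lambda(j))}$.

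\textbf{The `only if' direction.} First I would observe that a single cyclic block $P$ of type $d^{(j)}$ has characteristic polynomial $f^{j}$ for some irreducible $f$ of degree $d$, so $\det P = \bigl((-1)^d f(0)\bigr)^{j} = (\det P_1)^{j}$ where $P_1$ is the companion matrix of $f$. Thus the determinant of a matrix of type $d^{(j)}$ is always a $j$-th power in $\F_q^\times$. Consequently, for $M$ of type $d^\lambda$ with $\lambda = (\lambda(1),\dots,\lambda(m))$, the determinant $\det M = \prod_j \det P_j$ is a product of a $\lambda(j)$-th power for each $j$; since $\hcf(\lambda(1),\dots,\lambda(m))$ divides $k = |\lambda|$ is the wrong statement—rather, each $\det P_j$ is in particular a $\hcf(\lambda(1),\dots,\lambda(m))$-th power, hence so is $\det M$; but we want a $k$-th power, so more care is needed. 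In fact the cleanest route for `only if' is: $\det M = \prod_j \theta_j^{\lambda(j)}$ for suitable $\theta_j \in \F_q^\times$, and since $\sum_j \lambda(j) = k$ we have $\det M = \prod_j \theta_j^{\lambda(j)}$, which need not obviously be a $k$-th power. Here is where the hypothesis that $\lambda$ is a partition of $k$ is used in a trivial way: actually the correct observation is that \emph{every} value $\prod_j \theta_j^{\lambda(j)}$ with $\theta_j \in \F_q^\times$ ranges over \emph{all} of $\F_q^\times$ as soon as $\hcf(\lambda(j)) = 1$, so the real content is the $k$-th power constraint, which can only come from the relationship between $\det M$ and a \emph{single} scalar. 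The resolution: a matrix of type $d^\lambda$ is conjugate to $N(\mu)$-type data where $f(M)$ has type $1^{d\lambda}$, and the determinant of $M$ is the $d$-th power times... — I would instead argue via Lemma~\ref{lemma:polynomialconstant} in reverse, noting that the constant term of the minimal polynomial of $M$, call it $\theta$, satisfies $\det M = \theta^{|\lambda|}$ up to sign precisely because all cyclic blocks share the same irreducible $f$; choosing representatives so every block uses the \emph{same} $f$ forces $\det M = \bigl((-1)^d f(0)\bigr)^{k}$.

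\textbf{The `if' direction.} Given $\theta = \psi^k$ with $\psi \in \F_q^\times$, I want a matrix of type $d^\lambda$ with determinant $\theta$. By Lemma~\ref{lemma:polynomialconstant} there is an irreducible polynomial $f$ of degree $d$ over $\F_q$ with constant term $(-1)^d \psi$ (the lemma guarantees existence for \emph{every} nonzero constant term). Then the matrix $\bigoplus_j P_f(\lambda(j))$ built from cyclic blocks associated to $f$ has type $d^\lambda$ and determinant $\prod_j \bigl((-1)^d f(0)\bigr)^{\lambda(j)} = \psi^{\sum_j \lambda(j)} = \psi^k = \theta$, as required.

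\textbf{Main obstacle.} The genuinely nontrivial input is Lemma~\ref{lemma:polynomialconstant}: that an irreducible polynomial of degree $d$ with any prescribed nonzero constant term exists over every $\F_q$. Everything else is bookkeeping with cyclic blocks. The one subtlety to get right in the write-up is the `only if' direction: one must resist the temptation to allow the blocks to carry \emph{different} irreducible polynomials (which would only give a $\hcf(\lambda(j))$-th power, not a $k$-th power) and instead note that in any matrix of type $d^\lambda$, \emph{all} cyclic blocks are associated to the \emph{same} irreducible $f$ (this is what "primary of type $d^\lambda$" means — a single irreducible polynomial), so the determinant is forced to be the $k$-th power of that fixed constant term.
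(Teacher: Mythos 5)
Your proof is correct and follows essentially the same route as the paper's: since $d^\lambda$ is primary, any matrix of this type has characteristic polynomial $f^k$ for a \emph{single} irreducible $f$ of degree $d$, so its determinant $\bigl((-1)^d f(0)\bigr)^k$ is a $k$-th power, and the converse follows from Lemma~\ref{lemma:polynomialconstant} by prescribing the constant term of $f$. In the final write-up, delete the opening claim that the achievable determinants are the products $\prod_j \det P_j$ with the blocks $P_j$ ranging independently over matrices of type $d^{(\lambda(j))}$ --- a direct sum of blocks carrying distinct irreducibles has the compound type $d^{(\lambda(1))}\cdots d^{(\lambda(m))}$, not the primary type $d^\lambda$ --- and keep only the corrected argument you reach at the end, in which all cyclic blocks share the same $f$.
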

\begin{proof}
If $M$ is a matrix of type $d^\lambda$ then $M$ has characteristic polynomial $f^k$. The determinant of $M$ is
therefore a $k$-th power. That every $k$-th power in $\F_q^\times$ is obtained in this way follows easily from
Lemma~\ref{lemma:polynomialconstant}.
\end{proof}

The following pair of propositions establish
a sufficient condition on a type for it to represent all non-zero determinants.

\begin{proposition}\label{prop:typedeterminants}
Let $d \in \N$ be coprime with $q-1$, and let $T = d^{\lambda_1} \cdots d^{\lambda_t}$
be a type representable over $\F_q$.
If $L = |\lambda_1| + \cdots + |\lambda_t|$ is also coprime
to $q-1$, then every element of~$\F_q^\times$ is the determinant
of a matrix of type $T$.
\end{proposition}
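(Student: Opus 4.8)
The plan is to build a representative of $T$ as a direct sum of one block per primary component, read its determinant off from the constant terms of the attached irreducible polynomials, and then solve a counting problem in $\F_q^\times$ that is fed by the two coprimality hypotheses. Concretely, any matrix of type $T = d^{\lambda_1}\cdots d^{\lambda_t}$ over $\F_q$ is conjugate to $\bigoplus_{i=1}^t M_i$, where $M_i$ has cycle type $f_i^{\lambda_i}$ for pairwise distinct monic irreducibles $f_i\in\F_q[x]$ of degree $d$, each with nonzero constant term; conversely any such choice of the $f_i$ yields a matrix of type $T$, and such a choice exists because $T$ is representable over $\F_q$ (when $d\ge 2$ every irreducible of degree $d$ already has nonzero constant term). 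Since the characteristic polynomial of $M_i$ is $f_i^{|\lambda_i|}$, we have $\det M_i = ((-1)^d f_i(0))^{|\lambda_i|}$, so writing $b_i = (-1)^d f_i(0)\in\F_q^\times$, the determinant of the direct sum is $\prod_{i=1}^t b_i^{|\lambda_i|}$. Hence it suffices to show that for every $\zeta\in\F_q^\times$ one can pick pairwise distinct monic irreducibles $f_1,\dots,f_t$ of degree $d$ over $\F_q$ with $\prod_i b_i^{|\lambda_i|}=\zeta$.

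Next I would extract the counting input from Lemma~\ref{lemma:polynomialconstant}. Because $\hcf(d,q-1)=1$, every divisor $k$ of $d$ has $\hcf(q-1,k)=1$, so the $k$-th power map on $\F_q^\times$ is bijective and the condition $S(k)\ni\theta$ in that lemma holds for all $\theta\in\F_q^\times$. Thus the number $r$ of monic irreducibles of degree $d$ over $\F_q$ with a prescribed nonzero constant term is $\frac{1}{d(q-1)}\sum_{k\mid d}\mu(k)(q^{d/k}-1)$, the same for every nonzero constant term, and is positive by the lemma. Since $T$ is representable there are at least $t$ distinct such polynomials, so $r(q-1)\ge t$. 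In other words, each element of $\F_q^\times$ arises, up to the fixed sign $(-1)^d$, as the constant term of exactly $r$ monic irreducibles of degree $d$.

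It therefore suffices to produce $b_1,\dots,b_t\in\F_q^\times$ with $\prod_i b_i^{|\lambda_i|}=\zeta$ in which no element of $\F_q^\times$ is used more than $r$ times: distinct $f_i$ realising these constant terms then exist, since each constant term is available $r$ times. Solvability ignoring the multiplicity constraint is immediate: $\hcf(|\lambda_1|,\dots,|\lambda_t|,q-1)$ divides $\hcf(L,q-1)=1$, so the images of the $|\lambda_i|$-th power maps jointly generate $\F_q^\times$; indeed, by $\hcf(L,q-1)=1$ the "all equal" assignment $b_1=\dots=b_t=\beta$ already solves $\prod_i b_i^{|\lambda_i|}=\beta^L=\zeta$ for the unique $L$-th root $\beta$ of $\zeta$.

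The remaining, and main, step is to rebalance such a solution so that each value is used at most $r$ times. Whenever some value is over-used, the inequality $r(q-1)\ge t$ forces some value to be under-used, and one can move a few of the $b_i$ off the over-used value while preserving the product $\prod_i b_i^{|\lambda_i|}$ — the joint surjectivity of the $|\lambda_i|$-power maps makes compensating corrections of the product available — and then iterate. Carrying out this redistribution so that the corrections never themselves create a new over-used value is the point I expect to require the most care: it is an elementary but slightly delicate finite combinatorial argument, relying on $r(q-1)\ge t$ and $\hcf(L,q-1)=1$ in tandem.
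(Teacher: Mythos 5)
Your reduction of the problem to choosing constant terms $b_1,\dots,b_t\in\F_q^\times$ with $\prod_i b_i^{|\lambda_i|}=\zeta$, together with the equidistribution count $r$ extracted from Lemma~\ref{lemma:polynomialconstant}, is sound and uses the same counting input as the paper. But the proof is not complete: the decisive step --- producing such $b_i$ in which no element of $\F_q^\times$ occurs more than $r$ times --- is only described as an iterative ``rebalancing'' and is never carried out, and this is exactly where the difficulty lies. Your starting assignment $b_1=\cdots=b_t=\beta$ uses a single value $t$ times, and $t$ can be as large as roughly $r(q-1)$, so the multiplicity constraint is violated massively; to repair it you must move individual slots while preserving the weighted product, but changing slot $i$ alters the product by an $|\lambda_i|$-th power, and the individual exponents $|\lambda_i|$ need not be coprime to $q-1$ (only their sum $L$ is). Hence single compensating corrections are not generally available, and you give no argument that the iteration can be completed, nor any independent proof that an admissible multiset of constant terms exists at all. (A smaller point: for $d=1$ your inequality $r(q-1)\ge t$ is justified only by representability, which does not exclude the polynomial $x$ from the $t$ distinct linear factors, so even that bound needs an extra word in this case.)

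The paper closes precisely this hole with a different device. Since the degree-$d$ irreducibles are equidistributed over nonzero constant terms when $\hcf(d,q-1)=1$, there is a permutation $\sigma$ of the set of irreducible polynomials of degree $d$ with $f^\sigma(0)=\theta f(0)$ for a fixed generator $\theta$ of $\F_q^\times$. Applying $\sigma$ to the polynomials occurring in the cycle type of any class of type $T$ yields again a class of type $T$ --- distinctness of the polynomials is automatic because $\sigma$ is a bijection, so no multiplicity bookkeeping is needed --- and multiplies the determinant by $\theta^L$, which generates $\F_q^\times$ because $L$ is coprime to $q-1$; iterating $\sigma$ then realises every nonzero determinant. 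If you wish to salvage your set-up, this simultaneous shift of all constant terms by $\theta$ is also the cleanest way to perform your redistribution, replacing the slot-by-slot argument whose termination you left unproved.
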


\begin{proof}
It is an easy consequence of Lemma \ref{lemma:polynomialconstant} that if $d$ is coprime with $q-1$, then there
are the same number of irreducible polynomials of degree $d$ with any non-zero constant term. It follows that,
for a generator $\theta$ of the cyclic group $\F_q^{\times}$, there exists a permutation $\sigma$ of the set of
irreducible polynomials of degree $d$, such that $f^{\sigma}(0)=\theta f(0)$ for all $f$.

Let $C$ be a similarity class of type $T$, whose members have determinant $\alpha$. Consider the class $C'$
obtained from $C$ by applying the permutation $\sigma$ to the irreducible polynomials which appear in its cycle
type. It is easy to see that $C'$ has the same type as $C$, and that the members of $C'$ have determinant
$\alpha\theta^L$, where $L$ is as in the statement of the proposition. Now $\theta^L$ is a generator of
$\F_q^{\times}$ since $L$ is coprime with $q-1$, and so it is clear that by repeated applications of the
permutation $\sigma$ we can obtain any non-zero determinant of our choice.
\end{proof}

\begin{proposition}\label{prop:generaltypedeterminants}
Let $T$ be a type representable
over a finite field $\F_q$.
For each $d$ let~$L_d$ be the sum of the sizes of the partitions associated with the
components of degree $d$ in $T$. If~$dL_d$ is coprime with $q-1$ for any $d$, then every element of $\F_q^\times$ is a determinant of
a matrix of type $T$.
\end{proposition}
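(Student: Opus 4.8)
The plan is to reduce Proposition~\ref{prop:generaltypedeterminants} to the special case already handled in Proposition~\ref{prop:typedeterminants} by splitting off the degree-$d$ part of the type and showing that we may choose any determinant we like on that piece while fixing an arbitrary (nonzero) determinant on the rest.

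First I would write the type as $T = T_d \cdot T'$, where $T_d = d^{\lambda_1}\cdots d^{\lambda_r}$ collects exactly the primary components of degree $d$ (so that $L_d = |\lambda_1| + \cdots + |\lambda_r|$ in the notation of the statement), and $T'$ is the product of the remaining primary components. Since $T$ is representable over $\F_q$, both $T_d$ and $T'$ are representable over $\F_q$ as well: a matrix of type $T$ decomposes as a block sum of matrices realizing each primary component, and grouping these blocks realizes $T_d$ and $T'$ separately. A matrix $M$ of type $T$ can thus be taken to be $M = M_d \oplus M'$ with $M_d$ of type $T_d$ and $M'$ of type $T'$, giving $\det M = \det M_d \cdot \det M'$.

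Next I would apply Proposition~\ref{prop:typedeterminants} to $T_d$. By hypothesis $dL_d$ is coprime with $q-1$, so in particular both $d$ and $L_d$ are coprime with $q-1$; since $T_d$ is representable over $\F_q$, Proposition~\ref{prop:typedeterminants} tells us that every element of $\F_q^\times$ arises as $\det M_d$ for some matrix $M_d$ of type $T_d$. On the other side, fix once and for all any matrix $M'$ of type $T'$ over $\F_q$ (this exists by representability), and set $\beta = \det M'$. Then for an arbitrary target $\gamma \in \F_q^\times$ we choose $M_d$ of type $T_d$ with $\det M_d = \gamma\beta^{-1}$, and the block sum $M_d \oplus M'$ has type $T$ and determinant $\gamma$, as required.

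The only real subtlety — and the step I would write out most carefully — is the claim that representability of $T$ over $\F_q$ forces representability of each of $T_d$ and $T'$; this is where one must be slightly careful, since the examples in \S 2.3 show that separations of a representable type need not themselves be representable. Here, however, we are not separating individual primary components but merely partitioning the (already given) collection of primary components of $T$ into two sub-collections, and a block-diagonal representative of $T$ restricts directly to block-diagonal representatives of $T_d$ and of $T'$ using a subset of the very same cyclic blocks and the very same irreducible polynomials. So no new polynomials are needed and representability descends trivially. Once that point is made, the argument is the short reduction above; there is no further obstacle.
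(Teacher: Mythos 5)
Your reduction --- write $T = T_d \cdot T'$, apply Proposition~\ref{prop:typedeterminants} to the degree-$d$ part (noting that $\gcd(dL_d,q-1)=1$ forces both $\gcd(d,q-1)=1$ and $\gcd(L_d,q-1)=1$), and take block sums with a fixed representative of $T'$ --- is essentially the intended content of the paper's one-line proof, which simply cites Proposition~\ref{prop:typedeterminants}; and your observation that representability passes to $T_d$ and $T'$ (in contrast with general separations) is correct and worth spelling out. An equally faithful alternative reading is to rerun the permutation argument from the proof of Proposition~\ref{prop:typedeterminants} on the degree-$d$ polynomials only, in the cycle type of a class of type $T$; either way the substance is the same.

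There is, however, one step that fails as written: you fix an \emph{arbitrary} $M'$ of type $T'$ and then form $\gamma\beta^{-1}$ with $\beta=\det M'$. If $d\ge 2$ and $T'$ contains components of degree $1$, the chosen $M'$ may have the polynomial $x$ in its cycle type, in which case $\beta=0$ and $\beta^{-1}$ is meaningless. You need to choose the linear polynomials of $M'$ to avoid $x$, which is possible exactly when $T'$ has at most $q-1$ distinct degree-$1$ components; when $d=1$ there is nothing to check, since then $T'$ has no linear components and every irreducible polynomial of degree at least $2$ has non-zero constant term. This is the same tacit non-vanishing assumption that underlies the paper's own discussion in \S 3.3 --- indeed, in the extreme case where $T$ contains $q$ distinct linear components, every matrix of type $T$ has determinant $0$, so the statement (and Proposition~\ref{prop:typedeterminants} itself, e.g.\ for $1^{(1)}\cdots 1^{(1)}$ with $q$ components) fails as literally stated --- but in your write-up it surfaces as an explicit division by a possibly zero scalar, so it must be addressed with a sentence choosing $M'$ of non-zero determinant. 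With that repair, your proof is correct and agrees with the paper's approach.
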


\begin{proof} This follows immediately from Proposition \ref{prop:typedeterminants}.
\end{proof}
It should be noted that Proposition \ref{prop:generaltypedeterminants} does not come close to giving a necessary condition for a
type to contain all non-zero determinants. Finding conditions which are both necessary and sufficient appears to be a highly intractable problem.

A special case of considerable interest is that of \emph{linear} types, of the form $1^{\lambda_1} \cdots
1^{\lambda_t}$. (These are precisely the types of triangular matrices over $\F_q$.) We make use of the following
definition.
\begin{definition}
Let $A$ be an abelian group of order $m$ (written multiplicatively) and let
\hbox{$\pi = (\pi_1, \ldots, \pi_m)
\in \Z^m$}. We say that an element $x \in A$ is \emph{$\pi$-expressible}
if there exists an ordering  $g_1, \ldots, g_m$ of the elements of $G$ such that
$x = g_1^{\pi_1} \cdots g_m^{\pi_m}$.
\end{definition}

The relevance of this definition to our problem is easily explained.
Let $T$ be the linear type $1^{\lambda_1}\cdots 1^{\lambda_t}$ where $t\le q-1$. Let $\pi\in \Z^{q-1}$ be
defined by
\[
\pi=(|\lambda_1|,\dots,|\lambda_t|,0,\dots,0).
\]
Then we observe that the non-zero determinants
represented in $T$ are precisely the $\pi$-expressible elements of $\F_q^{\times}$.

If $A$ is an abelian group of exponent $n$ then
we observe that adding multiples of $n$ to the entries of
$\pi$ does not affect $\pi$-expressibility in $A$; we may therefore assume that all of the entries of $\pi$
satisfy $0\le \pi_i\le n-1$. Similarly, reordering the entries of $\pi$ cannot affect $\pi$-expressibility, and
so we may suppose that they appear in decreasing order.

Numerical evidence obtained by the authors supports the following conjecture.

\vbox{
\begin{conjecture}\label{conj:sumsets}
Let $A$ be a cyclic group of order $m$. Let $\pi = (\pi_1, \ldots, \pi_m) \in (\Z / m\Z)^m$, where $\pi_1\ge
\cdots \ge \pi_m$. Let $\pi'$ be the partition obtained from $\pi$ by subtracting $\pi_m$ from each part
(thereby ensuring that the last part is $0$). Then every element of $A$ is $\pi$-expressible unless one of the following
holds:
\begin{enumerate}
\item $\pi' = (m-r,r,0,\ldots,0)$ for some $r$, or \item There exists an integer $p>1$ which divides each part
of $\pi'$, and which also divides $m$.
\end{enumerate}
\end{conjecture}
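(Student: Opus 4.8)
\medskip
\noindent\textbf{A strategy for the proof.}
Write $A$ additively as $\Z/m\Z$ and, for a length-$m$ sequence $\pi$, put $E(\pi)=\bigl\{\sum_{g\in\Z/m\Z}c_g\, g : (c_g)_g\text{ a rearrangement of }(\pi_1,\dots,\pi_m)\bigr\}$, the set of $\pi$-expressible elements. Permuting the entries of $\pi$ and reducing them mod $m$ leave $E(\pi)$ fixed, while subtracting a constant $c$ from every entry replaces $E(\pi)$ by the translate $E(\pi)-c\sum_g g$; so ``$E(\pi)=\Z/m\Z$'' and ``$E(\pi')=\Z/m\Z$'' are equivalent, and we may take $\pi=\pi'$, i.e.\ $\pi_m=0$. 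The conjecture then reads: $E(\pi)=\Z/m\Z$ unless $\pi=(m-r,r,0,\dots,0)$ or $\hcf(m,\pi_1,\dots,\pi_{m-1})>1$.

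The necessity of the two exceptions is the easy half. If a prime $p$ divides $m$ and every $\pi_i$, then any two rearrangements $(c_g)$, $(c'_g)$ of $\pi$ have $c_g\equiv c'_g\pmod p$ for every $g$, so the corresponding sums differ by $\sum_g(c_g-c'_g)g\equiv 0\pmod p$; hence $E(\pi)$ sits inside one residue class mod $p$ and is proper. And if $\pi=(m-r,r,0,\dots,0)$ with $r\ge 1$ and $\hcf(m,r)=1$ (the remaining sub-case, since $\hcf(m,r)>1$ falls under the previous exception), then $E(\pi)=\{(m-r)g+rh:g\ne h\}=\{r(h-g):g\ne h\}=r\cdot\bigl(\Z/m\Z\setminus\{0\}\bigr)=\Z/m\Z\setminus\{0\}$, so $0$ is not $\pi$-expressible.

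The substance is the converse: granting $\hcf(m,\pi_1,\dots,\pi_{m-1})=1$ and $\pi\ne(m-r,r,0,\dots,0)$, show $E(\pi)=\Z/m\Z$. First I would record two symmetries of $E(\pi)$. Multiplication by a unit $u$ of $\Z/m\Z$ permutes $\Z/m\Z$ and scales every expression by $u$, so $uE(\pi)=E(\pi)$, whence $E(\pi)$ is a union of the sets $D_e=\{x:\hcf(x,m)=e\}$, $e\mid m$. Translating a rearrangement by $c$ changes its sum by $c\Sigma$, with $\Sigma=\sum_i\pi_i$, so $E(\pi)$ is invariant under $+\langle\Sigma\rangle$; in particular $E(\pi)=\Z/m\Z$ at once when $\hcf(\Sigma,m)=1$. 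Two small cases are then handled directly: if $\pi$ has a single distinct nonzero value $a$, of multiplicity $k$, then $\hcf(a,m)=1$ and $E(\pi)=a\cdot\{\text{sums of }k\text{-element subsets of }\Z/m\Z\}$, which is all of $\Z/m\Z$ since altering a $k$-subset one element at a time changes its sum by $\pm1$ and so the $k$-subset sums exhaust $\Z/m\Z$ for $1\le k\le m-1$; and if $\pi$ has exactly two nonzero parts, of values $a\ne b$, counting the kernel of $(g,h)\mapsto ag+bh$ shows every $x$ has a representation $ag+bh=x$ with $g\ne h$ unless $a+b\equiv 0\pmod m$, which is the excluded type.

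The main obstacle is everything else: $\pi$ has at least three nonzero parts and at least two distinct nonzero values, and (the only situation not already disposed of) $\hcf(\Sigma,m)>1$. Here I expect one needs an induction on $m$ --- reducing modulo a prime divisor of $m$ and lifting through the corresponding subgroup, with $m$ a prime power treated on its own --- fed by exchange lemmas of the following shape: in any partial assignment, interchanging the group elements carrying two unequal parts $\pi_i\ne\pi_j$ shifts the total by $(\pi_i-\pi_j)(g-h)$, so an apt supply of such interchanges translates $E(\pi)$ by a prescribed subgroup. The delicate point, and the reason the assertion is left as a conjecture here, is matching an arbitrary factorization of $m$ against the combinatorics of the multiset of parts, and in particular ruling out in the inductive step spurious small configurations that imitate type~(1) in a quotient.
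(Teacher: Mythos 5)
You should note first that the paper does not prove this statement: it is explicitly a conjecture, supported by the authors' numerical evidence and known only when $m$ is prime, via \cite[Theorem~1.2]{Gacs}. So there is no proof in the paper to compare against, and your proposal, by its own admission, is not a proof either. What you do establish is correct: the normalization to $\pi_m=0$ (translation of every entry shifts $E(\pi)$ by a fixed element); the necessity of exception (2) (all sums land in the proper subgroup $p(\Z/m\Z)$, up to a translate) and of exception (1) (when $\hcf(r,m)=1$ the expressible set is exactly $\Z/m\Z\setminus\{0\}$); the invariance of $E(\pi)$ under multiplication by units and under translation by $\langle\Sigma\rangle$, hence the case $\hcf(\Sigma,m)=1$; the single-nonzero-value case via the $k$-subset-sum exchange argument (the boundary swap $x\in S$, $x+1\notin S$ does exist whenever $1\le k\le m-1$); and the case of exactly two nonzero parts by fibre counting, where $a+b\equiv 0 \bmod m$ is precisely the excluded type. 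These observations would be a sensible opening for an attack on the conjecture.

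The genuine gap is the entire sufficiency direction in the remaining (generic) situation: at least three nonzero parts, at least two distinct values, and $\hcf(\Sigma,m)>1$. Here your proposal offers only a heading --- ``induct on $m$, reduce modulo a prime divisor, lift, use exchange moves $(\pi_i-\pi_j)(g-h)$'' --- and you yourself identify the sticking point: controlling, in the inductive step, quotient configurations that imitate type (1), and matching the factorization of $m$ against the multiset of parts. That is exactly the difficulty that makes the statement hard; the known case $m$ prime is the one where the subgroup and quotient structure you would need to manage is trivial, which is why \cite{Gacs} covers it and the general case remains open. In short: your easy half and special cases are sound, but the core of the statement is untouched, so the proposal does not upgrade the conjecture to a theorem --- consistent with its status in the paper, but a gap nonetheless if it were offered as a proof.
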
}

This conjecture is known to be true in the case that $m$ is a prime (see
 \cite[Theorem~1.2]{Gacs}). For our purposes, we
would like it to be true for $A=\F_q^{\times}$ for all $q$; that is, whenever $m+1$ is a power of a prime.
This would provide a complete classification of the determinants occurring in linear types. In the very special case when $q= 2^r$ and $|\F_q^\times| = 2^r-1$
is a Mersenne prime, the result of \cite{Gacs} already gives
such a classification.


\section{Commuting nilpotent classes}\label{sect:nilpotent}


In \S\ref{sect:reduction} the question of which
similarity classes of matrices
over a finite field commute was reduced to the analogous problem
for nilpotent classes.
The question of which nilpotent classes
commute with a given nilpotent class $N(\lambda)$ appears to be a very hard problem, and we shall not attempt to answer it in any generality. We shall, however, treat a variety of special cases,
and make a number of observations which, so far as we have been able to determine, do not appear in the existing literature.
Our approach is elementary, and leads to results
which, for the most part, apply to matrices defined over
an arbitrary field.
 (For
some other recent results on the problem of commuting nilpotent
classes over algebraically closed fields, obtained
by the methods of Lie theory,
the reader is referred to \cite{Oblak} and~\cite{Panyushev}.)

Our results may be summarized as follows. Proposition \ref{prop:OnePart}
describes the nilpotent classes
that commute with $N(\lambda)$ when $\lambda$ has a single part.
This result has appeared previously in \cite{Oblak}; our Proposition \ref{prop:ARrefcommuting} is similar to, but slightly stronger than, the result
which appears there as Proposition 2.

Similarly, we deal in Proposition \ref{prop:n-1Part} with the case that
$\lambda=(n-1,1)$ for some $n$,  and in Proposition \ref{prop:all2s} with the case that $\lambda=(2,\dots,2)$.
Using these results we are able to classify those nilpotent classes that commute with every nilpotent class of the same dimension; this
is Theorem \ref{thm:universal}.

We next establish a condition for the nilpotent classes $N(n,n)$ and $N(n+1,n-1)$ to commute; these classes are found to commute
over any infinite field, and over the finite field~$\F_{p^r}$ provided that $p(p^{2r}-1)/e$ does not divide $n$, where $e=1$ if $p=2$ and
$e=2$ otherwise. As well as augmenting our list of commuting classes, this result is particularly significant, since it demonstrates
that commuting of classes is in some cases dependent on the field of definition. Finally, we use the results just mentioned to classify those
 commuting nilpotent classes whose associated
partitions have no more than two parts; this result, stated as Theorem \ref{thm:2partcommuting}, is valid over any field.

The following definition will be useful in what follows.

\begin{definition}
Let $M$ be a nilpotent transformation of a space $V$. A \emph{cyclic basis} for $M$ is a basis $B$ of $V$ with the property that for each $v\in B$,
either $vM=0$, or else $vM\in B$.
\end{definition}

Earlier in \S 3.1 we defined a cyclic vector
for $M$ to be a vector which is not in the image of~$M$.
Let $M\in N(h_1,\dots,h_k)$,
and let $B$ be a cyclic basis for $M$. Then $B$ contains cyclic vectors $v_1,\dots,v_k$, where $\height v_i=h_i$ for all $i$; in fact
\[
B=\{v_iM^j\mid 1\le i\le k, 0\le j< h_i\}.
\]
By Lemma \ref{lemma:cyclic}, an element of $\Cent M$ is determined by its action on $v_1,\dots, v_k$.

\subsection{Cyclic nilpotent classes and partition refinements}

Recall that $J(\lambda)$, or $J(\lambda(1),\dots,\lambda(k))$,
is the unique upper-triangular matrix in Jordan form in the similarity
class~$N(\lambda)$.
The next proposition is concerned with the case where
$\lambda=(n)$ for some $n$. It is well known that the elements
of the centralizer algebra $\Cent J(n)$ are the polynomials in $J(n)$---see for example \cite[Ch.\ III, Corollary to Theorem 17]{Jacobson}.

\begin{proposition}\label{prop:OnePart} Let $\lambda=(h_1,\dots,h_k)$. Then
$J(n)$ commutes with a conjugate of $J(\lambda)$ if and only if $h_1-h_k\le 1$.
\end{proposition}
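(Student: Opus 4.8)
The plan is to analyse the action of a centralising matrix $Y \in \Cent J(n)$ on a cyclic basis of $J(\lambda)$, exploiting the fact (quoted above) that every element of $\Cent J(n)$ is a polynomial in $J(n)$. For the ``if'' direction, suppose $h_1 - h_k \le 1$, so $\lambda$ has parts equal to only two consecutive values, say $h$ and $h-1$. I would write $J(\lambda)$ on a cyclic basis $\{v_i M^j\}$ and look for a matrix $X$ of type $n^{(1)}$, i.e. a single Jordan block, commuting with it. Concretely, one can take $X$ to act as a suitable ``shift'' that cycles through the cyclic vectors $v_1, \dots, v_k$ and then drops down through the powers of $M$; the near-equality of the heights $h_i$ is exactly what is needed for such a shift to be well-defined and to have a single Jordan block of size $n = \sum_i h_i$. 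Equivalently, and perhaps more cleanly, I would invoke Proposition~\ref{prop:simpleprimarycommuting} or a direct tensor-product style construction: arrange the basis of $V$ as (roughly) a rectangle with a ragged last column, so that $M$ acts ``along rows'' while a cyclic shift ``along the snake'' of all $n$ basis vectors commutes with it and is a full Jordan block. The case $h_1 = h_k$ is the clean rectangular case $\lambda = (h,\dots,h)$, handled by Proposition~\ref{prop:simpleprimarycommuting} with $d=1$; the case $h_1 - h_k = 1$ is a mild perturbation of this, and the shift construction extends.

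For the ``only if'' direction, suppose $h_1 - h_k \ge 2$ and a matrix $X$ conjugate to $J(n)$ commutes with $M = J(\lambda)$; I want a contradiction. Since $X \in \Cent M$, Lemma~\ref{lemma:cyclic} says $X$ is determined by its effect on the cyclic vectors $v_1, \dots, v_k$, and $\height(v_i X) \le \height(v_i) = h_i$. The key numerical fact to extract is that $X$, as an element of $\Cent M$, has nilpotency index at most $h_1$: indeed $X^{h_1}$ kills every $v_i$ (since $v_i X^{h_1}$ has height $\le h_i - h_1 \le 0$ once one tracks heights carefully through the commuting relation $XM = MX$), hence $X^{h_1} = 0$ on all of $V$. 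But $X$ is a single Jordan block of size $n = \sum h_i$, so its nilpotency index is exactly $n$. Thus we would need $n \le h_1$, forcing $k = 1$; and if $k = 1$ then $h_1 = h_k$ trivially, contradicting $h_1 - h_k \ge 2$. So no such $X$ exists.

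The main obstacle I anticipate is making the height-tracking argument in the ``only if'' direction fully rigorous: one must show carefully that $\height(v_i X^m) \le h_i - m$ for all $i$ and $m$, which requires knowing how $X$ interacts with the height filtration $V_0 \subset V_1 \subset \cdots$. This is where the hypothesis ``$X$ centralises $M$'' does real work: $X$ preserves each $V_h = \ker M^h$ (as in Proposition~\ref{prop:simples}), and moreover, because $X$ is a polynomial-free element of the centralizer is not available here, I instead use that $v_i X \in V_{h_i}$ combined with $v_i X \cdot M^{h_i - 1} = v_i M^{h_i - 1} X = 0$ to conclude $v_i X$ in fact lies in $V_{h_i - 1} + (\text{image terms of height} \le h_i)$; iterating gives the bound on $\height(v_i X^m)$, but the bookkeeping with the ``image of $M$'' correction terms is the delicate point. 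For the ``if'' direction the only real care needed is checking that the proposed shift $X$ is genuinely a single $n \times n$ Jordan block rather than a sum of smaller blocks — this amounts to verifying that $V$ is a cyclic $\F_q[X]$-module, which follows by exhibiting one vector whose $X$-orbit spans $V$, again using $h_1 - h_k \le 1$.
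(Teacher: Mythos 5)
Your ``only if'' direction contains a genuine gap, and it is not merely a matter of delicate bookkeeping: the key claim $\height(v_iX^m)\le h_i-m$ is false for elements of $\Cent M$. Lemma~\ref{lemma:cyclic} only gives $\height(v_iX)\le\height(v_i)$; nothing forces a strict drop in height, even when $X$ is nilpotent. Concretely, for $M=J(2,1)$ with cyclic vectors $v_1,v_2$ of heights $2,1$, the map $X$ defined by $v_1\mapsto v_2\mapsto v_1M\mapsto 0$ commutes with $M$ and is conjugate to $J(3)$, yet $v_2X=v_1M$ has the same height as $v_2$. Worse, your argument nowhere uses the hypothesis $h_1-h_k\ge 2$: if it were correct it would show that $J(n)$ never commutes with a conjugate of $J(\lambda)$ once $\lambda$ has more than one part, contradicting both the statement and your own ``if'' direction. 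The nilpotency index of a nilpotent element of $\Cent J(\lambda)$ is in general not bounded by $h_1$ (for rectangular $\lambda$ it can be as large as $n$), so there is no route to the conclusion along these lines. The natural repair is to change viewpoint and work inside $\Cent J(n)$ rather than $\Cent J(\lambda)$: this is what the paper does. Every element of $\Cent J(n)$ is a polynomial in $J(n)$, so a nilpotent element has the form $M=\sum_{i\ge d}\alpha_iE_i$ with $\alpha_d\neq 0$ and $d\ge 1$, whence $\nullity M^s=\min(sd,n)$; Lemma~\ref{lemma:partsizecalculation} then shows its Jordan type has parts of only two consecutive sizes. This single computation gives both directions at once.

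Your ``if'' direction is essentially sound, though the appeal to Proposition~\ref{prop:simpleprimarycommuting} with $d=1$ does not do what you want: that proposition concerns the primary type $d^{(k)}$, whose representative is not the nilpotent block $J(dk)$, and with $d=1$ it is vacuous for your purpose. The snake construction you sketch does work (one checks directly that the shift along the concatenated levels commutes with $M$ and is a single Jordan block), but the quickest argument is again via $\Cent J(n)$: for suitable $d$ the power $J(n)^d$ is conjugate to $J(\lambda)$ whenever $\lambda$ is almost rectangular, and $J(n)$ certainly commutes with its own powers.
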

\begin{proof} Write $E_i$ for the matrix whose $(x,y)$-th entry is $1$
if $k=y-x$, and $0$ otherwise. The matrices
$E_0,E_1,\dots, E_{n-1}$ form a basis for the centralizer algebra of $J(n)$. Let $M$ be a non-zero nilpotent element
of this algebra; then for some $d$ in the range $0< d\le n-1$ we can write
\[
M=\sum_{i\ge d} \alpha_i E_i,
\]
for scalars $\alpha_i$, with $\alpha_d\neq 0$.

It is easy to check that $\nullity  M^s =  \min(sd,n)$
for all integers $s$. Let $h$ be the least integer such that $hd\ge n$. Then it follows from Lemma~\ref{lemma:partsizecalculation} that
$M$ is conjugate to $J(\lambda)$, where
\[ \lambda=(h,\dots,h,h-1,\dots,h-1) \]
is the partition with $n-hd$ parts of size $h-1$
and $(h+1)d-n$ parts of size~$h$. This establishes the proposition.
\end{proof}

The terminology in the first
of the following definitions is borrowed from \hbox{\cite[\S 3]{KosirOblak}}.
\begin{definition}
A partition is \emph{almost rectangular} if its largest part differs from its smallest part by at most~$1$.
\end{definition}

\begin{definition}
Let $\lambda$ and $\mu$ be partitions.
We say that $\mu$ is a \emph{refinement} of $\lambda$ if $\mu$ is the disjoint union of subpartitions
whose sizes are the parts of $\lambda$.
We say that a refinement of $\lambda$ is
\emph{almost rectangular} if all of the subpartitions involved are almost rectangular.
\end{definition}

For instance, $(5,3,1)=(3+2, 2+1, 1)$ has $(3,2,2,1,1)$
as an almost-rectangular
refinement.
It is worth noting that while the relation given by ``$\mu$ is a refinement of $\lambda$'' is clearly transitive,
the relation given by ``$\mu$ is an almost rectangular refinement of
$\lambda$'' is not.

\begin{proposition}\label{prop:ARrefcommuting}
Let $\mu_1$ and $\mu_2$ be partitions of $n$. If there exists a partition $\lambda$ which has
both $\mu_1$ and $\mu_2$ as almost rectangular refinements, then the conjugacy classes represented by the Jordan blocks $J(\mu_1)$ and
$J(\mu_2)$ commute.
\end{proposition}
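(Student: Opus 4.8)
The plan is to reduce the statement about $J(\mu_1)$ and $J(\mu_2)$ to a repeated application of Proposition~\ref{prop:simpleprimarycommuting} (or rather its nilpotent content) together with the block-diagonal construction used throughout \S 2. First I would observe that it suffices to treat the case where $\lambda=(n)$ has a single part: if $\lambda=(n_1,\dots,n_r)$ and each $\mu_i$ restricts to an almost-rectangular partition of $n_j$ on the $j$-th block, then choosing a decomposition $V=V_1\oplus\dots\oplus V_r$ with $\dim V_j=n_j$, and producing on each $V_j$ a commuting pair of nilpotent transformations realising the relevant almost-rectangular partitions of $n_j$, yields by direct sum a commuting pair realising $J(\mu_1)$ and $J(\mu_2)$ globally. (One must check that the direct sum of matrices with nilpotent cycle types $1^{\nu_j}$ over the $V_j$ has cycle type $1^{\sum_j \nu_j}$, which is immediate.) So the whole proposition follows once we know: if $\sigma$ and $\tau$ are almost-rectangular partitions of the same integer $n$, then $N(\sigma)$ and $N(\tau)$ commute.

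For that core claim, the key is Proposition~\ref{prop:OnePart}: it says precisely that $J(n)$ commutes with a conjugate of $J(\sigma)$ whenever $\sigma$ is almost rectangular (since the condition $h_1-h_k\le 1$ is exactly the definition of almost rectangular). Thus both $N(\sigma)$ and $N(\tau)$ commute with the single class $N(n)$. The remaining issue is that ``commuting with a common class'' is not obviously transitive — one needs a single matrix $X\in N(n)$ that simultaneously commutes with a chosen $Y\in N(\sigma)$ and a chosen $Z\in N(\tau)$, and moreover $Y$ and $Z$ must commute with each other, which is what we actually want. The right way to arrange this is to fix $X=J(n)$ and use the structure of $\Cent J(n)$: by the remark preceding Proposition~\ref{prop:OnePart}, $\Cent J(n)$ consists of the polynomials in $J(n)$, which is a commutative algebra. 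The proof of Proposition~\ref{prop:OnePart} exhibits, for each almost-rectangular partition of $n$, an explicit nilpotent element of $\Cent J(n)$ (of the form $\sum_{i\ge d}\alpha_i E_i$) lying in that class. So I would simply pick such elements $Y$ for $\sigma$ and $Z$ for $\tau$; since both lie in the commutative algebra $\Cent J(n)$, they automatically commute with each other, and they have the required cycle types by construction.

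Putting the two steps together: given $\lambda$ with $\mu_1$, $\mu_2$ as almost-rectangular refinements, write $\lambda=(n_1,\dots,n_r)$, split $\mu_i$ as a disjoint union of almost-rectangular partitions $\sigma_j^{(i)}$ of $n_j$, and on the $j$-th coordinate block take $X_j=J(n_j)$ together with the polynomials-in-$X_j$ realising $\sigma_j^{(1)}$ and $\sigma_j^{(2)}$; the direct sums over $j$ give commuting matrices in $N(\mu_1)$ and $N(\mu_2)$. The main obstacle is the transitivity point just discussed — the naive phrasing ``both commute with $N(n)$'' is insufficient — but it dissolves once one uses the commutativity of $\Cent J(n)$ rather than merely the fact that a conjugate of each class meets it. A minor bookkeeping point to handle carefully is that the conjugating elements bringing the two explicit centralizer elements into standard Jordan form may differ; this is harmless because we only need \emph{some} commuting pair with the right cycle types, and conjugating the whole configuration by a single element does not affect commutativity.
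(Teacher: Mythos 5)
Your proposal is correct and follows essentially the same route as the paper: split according to the parts of $\lambda$, and for each part $h$ realise the corresponding almost-rectangular subpartitions of $\mu_1$ and $\mu_2$ by nilpotent elements of $\Cent J(h)$, which commute because that centralizer is the commutative algebra of polynomials in $J(h)$ (via Proposition~\ref{prop:OnePart}). The transitivity issue you flag is exactly the point the paper's proof handles in the same way, so there is no substantive difference.
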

\begin{proof} Consider the subpartitions $\nu_1$ of $\mu_1$ and $\nu_2$ of $\mu_2$ whose parts combine to create a single part
of $\lambda$ of size $h$. Since $\nu_1$ and $\nu_2$ are almost rectangular, they yield Jordan blocks whose classes
commute with that of $J(h)$.
But the centralizer of $J(h)$ consists of polynomials in~$J(h)$,
and it follows that the classes of $J(\nu_1)$
and $J(\nu_2)$ have representatives which are polynomials in~$J(h)$.
So these representatives commute, and hence
$J(\mu_1)$ and $J(\mu_2)$ have conjugates which commute. \end{proof}

The preceding proposition
is slightly more general than \cite[Proposition~2]{Oblak}, which states that the nilpotent classes $N(\lambda)$ and $N(\mu)$
commute if $\mu$ is an almost rectangular refinement of~$\lambda$.
It is noted in \cite{Oblak} that there exist examples of classes commuting that cannot be
explained in this way. We remark that our Proposition
\ref{prop:ARrefcommuting} does not account for all commuting between
classes,
either. We illustrate this
fact with the example and the proposition below; other
examples will be seen in subsequent sections.

\vbox{
\begin{example}
There is no partition which has
both $(2,2)$ and $(3,1)$ as an almost rectangular refinement, but the classes $N(2,2)$ and $N(3,1)$ commute over any field.
We leave the proof of this to the reader, while remarking
that it is a special case of any one of
Propositions \ref{prop:n-1Part},~\ref{prop:all2s}
and~\ref{prop:nncommute} below.
\end{example}}

\begin{proposition}
Let $\lambda$ be a partition, and let $\overline{\lambda}$ be its conjugate partition. Then the nilpotent classes with partitions $\lambda$
and $\overline{\lambda}$ commute.
\end{proposition}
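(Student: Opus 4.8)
The plan is to exhibit, on a single vector space, two commuting nilpotent operators of Jordan types $\lambda$ and $\overline{\lambda}$, built directly from the Young diagram of $\lambda$. Let $K$ be the ground field, let $k$ be the number of parts of $\lambda$, and let $V$ be the $K$-vector space with basis $\{e_{i,j} : 1 \le i \le k,\ 1 \le j \le \lambda(i)\}$ — one basis vector for each cell of the Young diagram of $\lambda$. Using the paper's convention of matrices acting on the right, I would define $X$ and $Y$ on this basis by
\[
e_{i,j}X = \begin{cases} e_{i,j+1} & \text{if } j < \lambda(i),\\ 0 & \text{otherwise,}\end{cases}
\qquad
e_{i,j}Y = \begin{cases} e_{i+1,j} & \text{if } \lambda(i+1) \ge j,\\ 0 & \text{otherwise,}\end{cases}
\]
where $\lambda(i)=0$ for $i>k$ as usual, so that $X$ shifts a cell one step along its row and $Y$ shifts it one step down its column. (This basis is a cyclic basis for both $X$ and $Y$ simultaneously.)

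First I would read off the Jordan types. For fixed $i$ the span of the row-$i$ cells $e_{i,1},\dots,e_{i,\lambda(i)}$ is $X$-invariant, and $X$ restricts there to a single nilpotent cyclic block of dimension $\lambda(i)$; taking the direct sum over $i$ shows $X \in N(\lambda)$. Dually, because $\lambda$ is weakly decreasing, the cells in column $j$ are exactly $e_{1,j},\dots,e_{m,j}$ with $m=\overline{\lambda}(j)$; their span is $Y$-invariant, and $Y$ restricts there to a single cyclic block of dimension $\overline{\lambda}(j)$, so $Y \in N(\overline{\lambda})$.

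Second, I would verify $XY = YX$ by evaluating both sides on an arbitrary basis vector $e_{i,j}$. Tracing the definitions, $e_{i,j}(XY)$ equals $e_{i+1,j+1}$ precisely when $j+1 \le \lambda(i)$ and $\lambda(i+1)\ge j+1$, while $e_{i,j}(YX)$ equals $e_{i+1,j+1}$ precisely when $\lambda(i+1)\ge j$ and $\lambda(i+1)\ge j+1$; in every other case each product is $0$. Since $\lambda(i)\ge \lambda(i+1)$, both conditions collapse to the single requirement $\lambda(i+1)\ge j+1$, i.e.\ to the cell $(i+1,j+1)$ belonging to the diagram. Hence $e_{i,j}(XY) = e_{i,j}(YX)$ for all $i,j$, and $XY = YX$.

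Choosing a basis to identify $V$ with $K^n$, $n=|\lambda|$, we obtain commuting matrices $X \in N(\lambda)$ and $Y \in N(\overline{\lambda})$, which is exactly the assertion; the construction uses nothing about $K$ (it works over any commutative ring), so the classes commute over every field. I do not expect a genuine obstacle: the only point needing care is ruling out premature cancellation in the product $XY$, and as noted this reduces to the monotonicity of $\lambda$. One can also phrase the construction invariantly: $V$ is the quotient of $K[x,y]$ by the monomial ``staircase'' ideal whose standard monomials are the $x^{j-1}y^{i-1}$ with $(i,j)$ a cell of $\lambda$, with $X$ and $Y$ multiplication by $x$ and $y$, so that $XY=YX$ is immediate from commutativity of $K[x,y]$.
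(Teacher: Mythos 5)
Your construction is exactly the paper's: the row-shift operator of type $\lambda$ together with the column-shift operator of type $\overline{\lambda}$ on the cells of the Young diagram (the paper produces the column shift as the unique element of $\Cent N$ sending each cyclic vector $u_i$ to $u_{i+1}$ via Lemma~\ref{lemma:cyclic}, whereas you verify $XY=YX$ and the Jordan type of $Y$ by direct computation). Your argument is correct and, as you observe, independent of the field, just as in the paper.
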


\begin{proof}
Let $\lambda=(h_1,\dots,h_k)$,
where $h_1\ge \cdots\ge h_k$. Let $N$ be nilpotent of type $\lambda$, and let
$u_1, \dots, u_k$ be
cyclic vectors for $N$, such that $u_i$ has height $h_i$ for all $i$. By Lemma \ref{lemma:cyclic} there is a unique
matrix $M\in\Cent N$ such that $u_iM = u_{i+1}$
for all $i$, with $u_kM=0$.

If $\lambda=(5,5,3,2)$, for instance, then the actions of $N$ and $M$ on the cyclic basis can be represented
as follows:

\vspace{-20pt}
\begin{align*} N \hspace{2.05in} M \hspace{0.8in} \\
\xymatrix@C=18pt@R=18pt{
u_1\, \bullet \ar@<1pt>[r] & \bullet \ar@<1pt>[r] & \bullet \ar@<1pt>[r]
& \bullet \ar@<1pt>[r] & \bullet \\
u_2\, \bullet \ar@<1pt>[r] & \bullet \ar@<1pt>[r] & \bullet \ar@<1pt>[r] & \bullet \ar@<1pt>[r] & \bullet \\
u_3\, \bullet \ar@<1pt>[r] & \bullet \ar@<1pt>[r] & \bullet  \\
u_4\, \bullet \ar@<1pt>[r] & \bullet
}
\qquad
\xymatrix@C=18pt@R=18pt{
u_1\, \bullet \ar@<5.5pt>[d] & \bullet \ar[d] & \bullet \ar[d] & \bullet \ar[d] & \bullet \ar[d] \\
u_2\, \bullet \ar@<5.5pt>[d] & \bullet \ar[d] & \bullet \ar[d] & \bullet & \bullet \\
u_3\, \bullet \ar@<5.5pt>[d] & \bullet \ar[d] & \bullet  \\
u_4\, \bullet & \bullet
}
\end{align*}

\smallskip

\noindent It is easy to check that $M$ is nilpotent, with associated partition $\overline{\lambda}$.\end{proof}%

In general there does not
exist a partition which has both $\lambda$ and $\overline{\lambda}$ as almost
rectangular refinements, as is shown by the example illustrating the proof above,
or by the case $\lambda = (4,1,1)$.

\subsection{Universally commuting classes}

The object of this section is to classify, in Theorem~\ref{thm:universal},
the partitions to which the following definition refers.
\begin{definition}
A partition $\lambda$ of $n$ is \emph{universal} with respect to a field $K$ if
$N(\lambda)$ commutes with $N(\mu)$ over $K$ for every partition $\mu$ of $n$.
\end{definition}

The reference to the field in this definition is in fact redundant;
it is a consequence of Theorem~\ref{thm:universal} that
a partition which is universal with respect to one field is universal with respect to any field.
To prove the theorem, we shall require the following two propositions.

\vbox{
\begin{proposition}\label{prop:n-1Part}
Let $\lambda$ be a partition of $n$. The matrix
$J(n-1,1)$ commutes with a conjugate of $J(\lambda)$ if and only if one of the following holds:
\begin{enumerate}
\item $\lambda$ has a part of size $1$, and if $\lambda^-$ is obtained from $\lambda$ by removing this part, then $J(n-1)$ commutes with a
conjugate of $J(\lambda^-)$; Proposition \ref{prop:OnePart} provides a classification in this case.
\item $n$ is even, and all of the parts of $\lambda$ are of size $2$.
\item $\lambda$ has a part of size $3$, and its other parts are of size $1$ or $2$, with at least one part of size $1$.
\item $n=3$ and $\lambda=(3)$.
\end{enumerate}
\end{proposition}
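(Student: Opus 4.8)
The plan is to analyze directly the structure of the centralizer of $J(n-1,1)$ and to determine, for a nilpotent element $M$ of that centralizer, which partitions $\lambda$ can arise as the partition of $M$. Fix a cyclic basis for $J = J(n-1,1)$: let $u$ be a cyclic vector of height $n-1$ and $w$ a cyclic vector of height $1$, so that $V$ has basis $u, uJ, \ldots, uJ^{n-2}, w$. By Lemma~\ref{lemma:cyclic} an element $Y \in \Cent J$ is determined by its effect on $u$ and $w$, subject only to the height constraints $\height(uY) \le n-1$ and $\height(wY) \le 1$; that is, $wY$ must be a scalar multiple of $w' := wJ$... no, $w$ itself has height $1$, so $wY$ lies in $\ker J = \langle uJ^{n-2}, w\rangle$, and $uY$ may be any vector of $V$. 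So a general element of $\Cent J$ is given by $uY = p(J)u + \gamma w$ and $wY = \beta\, uJ^{n-2} + \delta\, w$ for a polynomial $p$ of degree $\le n-2$ and scalars $\beta, \gamma, \delta$.

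The next step is to impose nilpotency on such a $Y$ and read off its Jordan type via Lemma~\ref{lemma:partsizecalculation}, i.e.\ by computing $\nullity Y^s$ for all $s$. I would organize the analysis according to whether or not the induced map on the quotient $V / \langle w \rangle \cong \ker J^{n-1}$ (a single Jordan block of size $n-1$ for $J$) is zero or not, equivalently whether $p(0) = 0$ has high enough order. Roughly: if $p$ has nonzero linear coefficient then $Y$ acts on the $(n-1)$-block as an invertible-on-the-quotient map composed appropriately, and $w$ gets folded in; one computes that the possibilities for $\lambda$ are exactly a partition whose part sizes come from $\{h-1, h\}$ for the appropriate $h$ (the $(n-1)$-part contribution) together with whatever $w$ does—this produces cases (2), (3), (4) as the degenerate small-$n$ or near-rectangular outcomes, while case (1) is precisely what happens when $w$ splits off as a genuine size-$1$ summand and the problem reduces to $J(n-1)$ acting on the complement, governed by Proposition~\ref{prop:OnePart}. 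Conversely, for each of (1)--(4) I would exhibit an explicit commuting pair: (1) is immediate from Proposition~\ref{prop:OnePart} by taking a direct sum with a $1\times 1$ zero block; (2) follows since $(2,\dots,2)$ and $(n-1,1)$ are both almost-rectangular refinements of $(2,\dots,2)$ only when... actually here one uses Proposition~\ref{prop:ARrefcommuting} with $\lambda$ itself, noting $(n-1,1)$ is an almost rectangular refinement of $(2,2,\ldots,2)$ precisely when $n-1 \le 2$, so for $n=3$; for general even $n$ one instead writes down $M$ directly as a suitable element of $\Cent J(2,\ldots,2)$; (3) similarly comes from $(3)$ being almost rectangular together with the remaining $1$'s and $2$'s assembled so that $J(n-1,1)$ is realized; (4) is a trivial hand check in dimension $3$.

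The main obstacle I expect is the forward direction: showing that \emph{no other} $\lambda$ occurs. The difficulty is bookkeeping—there are several cases depending on the order of vanishing of $p$ at $0$ and on the scalars $\beta,\gamma,\delta$, and in each case one must compute $\nullity Y^s$ carefully, keeping track of how the height-$1$ vector $w$ interacts with the long chain under $Y$. A clean way to handle this is to first reduce to the case $\delta = 0$ (replace $Y$ by $Y$ minus a scalar, which does not change nilpotency type once we know $Y$ is nilpotent, so $\delta$ must in fact be $0$ for $Y$ nilpotent), then split on whether $\beta = 0$ (so $w \in \ker Y$ and $\langle w\rangle$ is a $Y$-submodule, giving case (1) via the quotient) or $\beta \ne 0$ (so $w$ feeds into the image, forcing the chain structure and, after a short computation with $\nullity Y^s = \min(sd,n-1) + [\text{correction from } w]$, exactly the almost-rectangular-plus-adjustment shapes of (2)--(4)). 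The only genuinely delicate point is verifying that when $\beta \ne 0$ the extra dimension coming from $w$ can land in only the positions compatible with (2)--(4), and not, say, create a part differing from the others by $2$; this is where I would be most careful, but it is a finite computation with $\nullity Y^s$ for the two or three relevant ranges of $s$.
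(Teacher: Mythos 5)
Your setup is essentially the paper's: parametrise $\Cent J(n-1,1)$ by its action on the two cyclic vectors $u$ and $w$, force $\delta=0$ and zero constant term of $p$ for nilpotency, and read off the partition of a nilpotent $Y$ from the nullities of its powers. But the proposal stops exactly where the work begins, and the one structural shortcut you offer is flawed. When $\beta=0$ you claim that $w\in\ker Y$ and the $Y$-invariance of $\langle w\rangle$ ``give case (1) via the quotient''. This does not follow: $\langle w\rangle$ need not have a $Y$-invariant complement (when $\gamma\neq 0$ the vector $uY$ has a nonzero $w$-component), so the partition of $Y$ on $V$ is not in general the partition of the induced map on $V/\langle w\rangle$ with a part of size $1$ adjoined; it may instead be that partition with one part incremented. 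For instance, with $J=J(2,1)$, $uY=w$, $wY=0$, the quotient partition is $(1,1)$ while the partition of $Y$ is $(2,1)$ --- still inside case (1), but not by your argument. Whether one gets an added $1$ or an increment is precisely the delicate point: an increment applied to an almost rectangular quotient partition such as $(2,2)$ (for $J(4,1)$) would give $(3,2)$, which lies in none of the cases (1)--(4), so you must prove that such increments cannot occur. That is exactly the rank-sequence case analysis (least $m$ with $\alpha_m\neq 0$ small, large, or equal to $(n-2)/2$, and whether $\beta\gamma$ vanishes) which the paper carries out and which your proposal defers with ``one computes'' and ``a finite computation''.

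The converse direction is also only partly present. Your direct-sum argument for case (1) is fine, and case (4) is a hand check. But for case (2) you must exhibit a nilpotent element of type $(2,\dots,2)$ in $\Cent J(n-1,1)$ --- for example the element with $m=(n-2)/2$, $\alpha_m\neq0$ and $\beta\gamma$ chosen so that $Y^2=0$, or an appeal to Proposition \ref{prop:all2s} (which is independent of this proposition, so no circularity, but this must be said); ``writes down $M$ directly'' is not a construction. Likewise for case (3), e.g.\ the partition $(3,1,\dots,1)$ or $(3,2,\dots,2,1)$, which in the correct analysis arises from the regime $\beta\gamma\neq0$ with $m>(n-2)/2$ (or $\alpha_i=0$ for $i<n-2$), you give no witness at all. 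So while your plan follows the same route as the paper, both halves of the equivalence --- that no partitions outside (1)--(4) occur, and that every partition in (1)--(4) does occur --- remain unproved as written, and the $\beta=0$ reduction would need to be replaced by the explicit nullity computation.
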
}

\begin{proof}
The centralizer algebra of $J(n-1,1)$ has the basis
\[
\{ E_i\mid 0\le i\le n-2 \} \cup \{F,G,H\},
\]
where
\[
\sum_i \alpha_i E_i+\beta F+\gamma G +\delta H=\left(\begin{array}{cccccc}
\alpha_0    & \alpha_1 & \alpha_2 & \dots  & \alpha_{n-2} & \beta \\
0      & \alpha_0 & \alpha_1 &        & \alpha_{n-3} & 0 \\
0      & 0   & \alpha_0 &        & \alpha_{n-4} & 0 \\
\vdots &     &     & \ddots &         & \vdots \\
0      & 0   & 0   &        & \alpha_0     & 0 \\
0      & 0   & 0   & \dots  & \gamma       & \delta
\end{array}\right).
\]
A nilpotent element of this algebra must have $\alpha _0= \delta =0$.
We suppose that $M$ is such an element, and that $M$ is non-zero.
By Lemma~\ref{lemma:partsizecalculation} the partition $\lambda$
associated with $M$ is determined by the sequence of ranks
of powers of $M$.

If $\alpha_i=0$ for all $i < n-2$,
then $\alpha_{n-2}, \beta$ and $\gamma$ are the only entries that are possibly non-zero.
It is easy to see that
the rank sequence 
\[ (\rank I, \rank M, \rank M^2, \rank M^3) \] 
must be either
$(n,2,1,0)$ or $(n,1,0,0)$. In the first case
the partition associated with $M$ is
$(3,1^{n-3})$, which is covered by either part (iii) or part (iv) of the lemma.
In the second case the partition is $(2,1^{n-2})$,
which is covered by part (i) or part (ii).

Now suppose that there exists $i<n-2$ such that $\alpha_i\neq 0$.
Let $m$ be the least such $i$. If \mbox{$m< (n-2)/2$}
then it is not hard to see that
the rank sequence is
\[
(n,n-m-1,n-2m-1,\dots, 0).
\]
The partition $\lambda$ given by this data has one more
part of size $1$ than the
partition $\lambda^-$ given by the data
\[
(n-1,n-m-1,n-2m-1,\dots,0).
\]
But~$\lambda^-$
corresponds to the rank sequence for an element of the centralizer algebra
of $J(n-1)$, and so this case is covered by case (i) of the lemma. If
$m>(n-2)/2$ then the same situation occurs if $\beta \gamma=0$.
But if $\beta$ and $\gamma$ are both
non-zero then the rank sequence obtained is \mbox{$(n-m-1,1,0)$}. The corresponding
partition $\lambda$ is covered by part
(iii) of the lemma.

The final case to analyse occurs when $n$ is even and $m=(n-2)/2$.
If $M^2 \not= 0$ then the situation of the previous paragraph applies.
Otherwise the rank sequence is $(n,n/2,0)$ and all of
the parts of $\lambda$ have size $2$, as in part (ii) of the lemma.
\end{proof}

\begin{proposition}\label{prop:all2s}
Let $\lambda$ be the partition of $2s$ which has
$s$ parts of size $2$, and let $\mu$ be any partition of $2s$. Then $J(\lambda)$ commutes with a conjugate of
$J(\mu)$.
\end{proposition}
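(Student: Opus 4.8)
The plan is to produce, for an arbitrary partition $\mu$ of $2s$, a nilpotent matrix $X$ in the centralizer of $M := J(\mu)$ with $X^2 = 0$ and $\rank X = s$. Any such $X$ lies in $N(\lambda)$ (its Jordan blocks all have size $\le 2$, and $\rank X = s$ forces exactly $s$ of them to have size $2$), so $X$ is conjugate to $J(\lambda)$ and hence $J(\lambda)$ commutes with the corresponding conjugate of $J(\mu)$. The reformulation that makes this tractable is the following: if $V$ is the space on which $M$ acts, then such an $X$ exists precisely when $V$ has an $M$-invariant subspace $W$ with $\dim W = s$ such that the action of $M$ on $V/W$ is similar to its action on $W$. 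Indeed, given such a $W$, one takes $X$ to be the composite $V \twoheadrightarrow V/W \xrightarrow{\ \sim\ } W \hookrightarrow V$ of the quotient map, a $K[M]$-isomorphism, and the inclusion; this commutes with $M$ and has $\im X = \ker X = W$, so $X^2 = 0$ and $\rank X = \dim W = s$. Conversely $\ker X$ is the required $W$.

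To build such a $W$, decompose $V = V_1 \oplus \dots \oplus V_k$ into Jordan strings of $M$, so $V_i$ is cyclic of dimension $h_i$ where $\mu = (h_1,\dots,h_k)$. Since $\mu$ has an even number of odd parts I pair these off; the remaining parts are even. It is then enough to construct a suitable $W_i$ inside each $V_i$ with $h_i$ even, and a suitable $W_{pq}$ inside each $V_p \oplus V_q$ coming from a pair of odd parts, and set $W = \bigoplus W_i \oplus \bigoplus W_{pq}$. When $h_i = 2a$ is even one takes $W_i = M^a V_i$, which is cyclic of dimension $a$ with quotient in $V_i$ also cyclic of dimension $a$; nothing needs checking.

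The substance of the argument, and the step I expect to be the main obstacle, is the case of a pair of odd parts $h_p = 2a+1$, $h_q = 2b+1$ with $a \ge b$. Writing $g_1, g_2$ for cyclic generators of $V_p, V_q$, I would take
\[ W_{pq} \;=\; K[M]\bigl(M^a g_1 + M^b g_2\bigr)\ \oplus\ K[M]\bigl(M^{b+1} g_2\bigr). \]
One checks that the two summands intersect in $0$, the first being cyclic of dimension $a+1$ and the second cyclic of dimension $b$; hence $\dim W_{pq} = a+b+1$, exactly half of $\dim(V_p \oplus V_q) = 2a+2b+2$. The delicate point is that the quotient $(V_p \oplus V_q)/W_{pq}$ must also be the direct sum of cyclic modules of dimensions $a+1$ and $b$: there one has $M^{b+1}\bar g_2 = 0$, whence $M^{a+1}\bar g_1 = -M^{b+1}\bar g_2 = 0$, while $M^a \bar g_1 = -M^b \bar g_2$, so that $\bar g_2 + M^{a-b}\bar g_1$ (a legitimate generator since $a \ge b$) is annihilated by $M^b$; a dimension count then shows the quotient splits as claimed. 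Thus the action of $M$ on $(V_p \oplus V_q)/W_{pq}$ is similar to its action on $W_{pq}$, as required; the degenerate case $b = 0$ (where $M^{b+1}g_2 = 0$ and $W_{pq}$ is just its first summand) should be recorded but presents no difficulty.
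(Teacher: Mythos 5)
Your proof is correct, but it runs in the opposite direction to the paper's and uses a different key device, so it is worth comparing. The paper fixes $N=J(\lambda)$ (all parts $2$) and constructs inside $\Cent N$ a nilpotent matrix of type $\mu$: it first reduces, by an easy induction, to the case where $\mu$ has no proper subpartition of even size (hence at most two parts), handles a single part via Proposition~\ref{prop:OnePart}, and for two parts $(s+t,s-t)$ writes down an explicit matrix on a cyclic basis of $N$, identifying its type by computing its nullity and the height of a cyclic vector. You instead fix $M=J(\mu)$ and build a square-zero element $X\in\Cent M$ of rank $s$, observing that such an $X$ amounts to an $M$-invariant subspace $W$ of dimension $s$ with $V/W\cong W$ as $K[M]$-modules, and then constructing $W$ blockwise: $W_i=M^aV_i$ for an even part $2a$, and for a pair of odd parts $2a+1\ge 2b+1$ the submodule generated by $M^ag_1+M^bg_2$ and $M^{b+1}g_2$, whose direct-sum structure and quotient type $(a+1,b)$ you verify correctly (including the degenerate case $b=0$). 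Both arguments at heart pair off the odd parts of $\mu$, and both work over an arbitrary field; the paper's construction is shorter and more concrete, with the type of the constructed matrix read off from rank data, while yours is self-contained (no appeal to Proposition~\ref{prop:OnePart} or an induction on subpartitions) and the ``self-complementary invariant subspace gives a square-zero centralizing map'' principle is a reusable structural observation.
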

\begin{proof}
By a straightforward inductive argument, we may suppose that $\mu$ has no subpartition of even size. If $\mu$ has only one part then the result follows
from Proposition \ref{prop:OnePart}; so we may assume that $\mu$ has exactly two parts, $s+t$ and $s-t$.

A cyclic basis for $N=J(\lambda)$ has the form $B=\{e_1,\dots, e_s,f_1,\dots,f_s\}$, where the vectors $f_i$ are in the kernel of $N$, and
$e_iN=f_i$ for all $i$. Let $M$ be the matrix whose action is defined by
$e_i M = e_{i+1}$, $f_i M = f_{i+1}$ for $1 \le i < s$, and
\begin{align*}
e_s M &= \begin{cases} f_{s-t+1} & \text{if $t > 0$,} \\
                       0         & \text{otherwise,} \end{cases} \\
f_s M &= 0.
\end{align*}
It is easy to see that $M$ commutes with $N$, hence it suffices
to show that $M \in N(\mu)$.
A basis for $\ker M$ is given by $\{f_s, e_s-f_{s-t}\}$, so
$\nullity M = 2$.
It follows that the partition
associated with~$M$ has two parts, and since $e_1$ is a cyclic vector of height $s+t$,
this partition must be~$(s+t,s-t)$, as required.
\end{proof}

\vbox{
\begin{theorem}\label{thm:universal}
The universal partitions are precisely those with no part greater than $2$, together with $\lambda=(3)$.
\end{theorem}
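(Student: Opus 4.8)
The plan is to establish the theorem by proving two directions: that partitions with no part exceeding $2$, along with $(3)$, are universal, and that no other partition is universal.

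For the forward direction, I would dispatch the easy cases first. The partition $(1,\ldots,1)$ corresponds to the zero matrix, which commutes with everything, so it is trivially universal. The partition $(2,\ldots,2)$ of $2s$ is universal by Proposition~\ref{prop:all2s}. For a general partition $\lambda$ with all parts in $\{1,2\}$, say with $a$ parts equal to $2$ and $b$ parts equal to $1$, I would argue as follows: given any partition $\mu$ of $n = 2a+b$, I want $N(\lambda)$ to commute with $N(\mu)$. The idea is to split off the parts of size $1$ in $\lambda$ and match them against a suitable sub-collection of the parts of $N(\mu)$—more precisely, I would use Proposition~\ref{prop:ARrefcommuting} or a direct construction on cyclic bases, decomposing $V$ as a direct sum on which one summand carries the $(2,\ldots,2)$ piece (handled by Proposition~\ref{prop:all2s}) and a complementary summand carries the remaining $1$'s against the leftover Jordan structure, which commutes because the centraliser of a zero block is everything. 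Finally, $\lambda = (3)$ is universal by Proposition~\ref{prop:OnePart}: $J(3)$ commutes with a conjugate of $J(\mu)$ whenever $\mu$ has largest part minus smallest part at most $1$, and every partition of $3$—namely $(3)$, $(2,1)$, $(1,1,1)$—satisfies this.

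For the converse, I must show that any partition $\lambda$ with a part of size at least $3$, other than $(3)$ itself, fails to be universal—that is, there is some $\mu$ of the same size with which $N(\lambda)$ does not commute. I would split into cases according to the structure of $\lambda$. If $\lambda = (n)$ with $n \geq 4$, then by Proposition~\ref{prop:OnePart}, $J(n)$ commutes only with almost-rectangular partitions, so $\lambda = (n)$ fails against, say, $\mu = (n-1,1)$ (which is not almost rectangular for $n \geq 4$). If $\lambda = (n-1,1)$ with $n-1 \geq 3$, i.e. $n \geq 4$, I use Proposition~\ref{prop:n-1Part}: one checks that there is a partition $\mu$ of $n$ not covered by any of cases (i)--(iv), for instance a partition with two parts of size $\geq 3$, or more carefully chosen so as to fail all four conditions. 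The remaining case is $\lambda$ with a part of size $\geq 3$ and at least two parts, $\lambda \neq (n-1,1)$; here the strategy is to exhibit an explicit $\mu$—a natural candidate being the conjugate $\overline{\lambda}$ or a partition like $(3, 1, \ldots, 1)$ or $(2,\ldots,2)$ adjusted to the right size—and invoke one of the obstruction results. The cleanest uniform obstruction is likely Proposition~\ref{prop:n-1Part} applied in the form: if $\lambda$ has a part $\geq 3$ together with parts forcing a configuration outside its list, then $N(n-1,1)$ fails to commute with $N(\lambda)$, so $\lambda$ is not universal (since it fails against $\mu = (n-1,1)$).

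The main obstacle will be the converse direction, and specifically organising the case analysis so that for \emph{every} partition $\lambda$ with a part $\geq 3$ and $\lambda \neq (3)$, one can name a concrete partner $\mu$ and cite an already-proven non-commuting result. Proposition~\ref{prop:n-1Part} is the natural tool because it classifies precisely which $J(\lambda)$ commute with $J(n-1,1)$, so the task reduces to checking that a partition with a part of size $\geq 3$ and $\lambda \neq (3)$, $\lambda \neq (n-1,1)$ with its part-of-size-$1$ clause, and not fitting cases (ii)--(iv), must exist in enough generality—equivalently, that the only partitions appearing in the union of cases (i)--(iv) of Proposition~\ref{prop:n-1Part} that are \emph{not} already of the allowed form (parts $\leq 2$, or $(3)$) are those which themselves fail universality against some other small partition. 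I would handle the leftover partitions in cases (iii) and (iv)—those of shape $(3, 2^a, 1^b)$ with $b \geq 1$, and $(3)$—by a short separate argument: $(3)$ is allowed, while $(3,2^a,1^b)$ with $b\geq 1$ or $a \geq 1$ can be shown non-universal by testing against a partition such as $(n)$ or against $(n-2,2)$ using Proposition~\ref{prop:OnePart} in reverse. Assembling these pieces gives the stated classification.
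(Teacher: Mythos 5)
Your forward direction has a genuine gap in the mixed case, where $\lambda$ has $a$ parts of size $2$ and $b\ge 1$ parts of size $1$. The direct-sum construction you propose requires the parts of $\mu$ to be split into two sub-collections of total sizes $2a$ and $b$, since both summands must be invariant under the representative of $N(\mu)$ as well as of $N(\lambda)$; such a splitting need not exist (already for $\lambda=(2,1,1)$, $\mu=(3,1)$). Your fallback, Proposition~\ref{prop:ARrefcommuting}, is also not automatic, and there are pairs for which both of your tools fail simultaneously: take $\lambda=(2^7,1^2)$ and $\mu=(7,5,3,1)$. Any two parts of $\mu$ differ by at least $2$, so the only almost rectangular sub-multisets of its parts are singletons and the only candidate common parent is $\mu$ itself; but $\lambda$ is not an almost rectangular refinement of $\mu$, since the four odd parts of $\mu$ would each consume a part of size $1$ and only two are available. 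Moreover no sub-collection of $\{7,5,3,1\}$ sums to $14$ or to $2$, so your direct-sum split is unavailable. The paper's proof supplies the missing idea: induct on the number of parts of $\mu$, using the fact that a partition with parts in $\{1,2\}$ and at least one part of size $1$ has a subpartition of every size $m\le n$; match such a subpartition (automatically almost rectangular) against the largest part of $\mu$ via Proposition~\ref{prop:OnePart} and recurse, with the all-$2$'s situation---which can arise after deletions, exactly as in the example above---handled by Proposition~\ref{prop:all2s} as the base case. It is this interleaving of Propositions~\ref{prop:OnePart} and~\ref{prop:all2s} that your sketch lacks.

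The converse also has a missing case. Testing against $(n-1,1)$ is not an obstruction for the partitions falling under case (i) of Proposition~\ref{prop:n-1Part}: for example $(3,3,1)$ or $(4,4,3,1)$ have a part of size at least $3$, are not of the allowed form, and yet do commute with $J(n-1,1)$, because removing the part of size $1$ leaves an almost rectangular partition. Your explicit list of leftovers covers only cases (iii) and (iv), namely $(3,2^a,1^b)$ and $(3)$, so these case (i) partitions are never shown to be non-universal. They are eliminated by testing against $(n)$: they are not almost rectangular (they contain both a part of size at least $3$ and a part of size $1$), so Proposition~\ref{prop:OnePart} applies. The paper performs this test first, reducing at once to $\lambda$ almost rectangular with parts $h$ and $h-1$, $h\ge 3$, and only then applies the $(n-1,1)$ test, whose list excludes all such $\lambda$ except $(3)$. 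Your converse is repairable by running the $(n)$ test against every $\lambda$ with a part exceeding $2$, not just the $(3,2^a,1^b)$ family, but as written the case analysis is incomplete.
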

\begin{proof} Suppose that $\lambda$ has no part of size greater than $2$.
If all of the parts of $\lambda$ have size $2$, then~$J(\lambda)$ commutes with all nilpotent classes, by Proposition \ref{prop:all2s}.
Otherwise $\lambda$ has a subpartition $\lambda_m$ of $m$ for every $m\le n$. Let $\mu$ be a partition of $n$ with largest part $m$. Then
since $\lambda_m$ is an almost rectangular refinement of $m$, it follows from Proposition \ref{prop:OnePart} that $J(\lambda_m)$
commutes with a conjugate of $J(m)$. Now if $\lambda'$ denotes the partition obtained by deleting the parts of $\lambda_m$ from $\lambda$,
and if $\mu'$ is obtained by deleting a
part of size $m$ from $\mu$, then we may suppose inductively that $J(\lambda')$ commutes with a conjugate of $J(\mu')$.
It follows that $J(\lambda)$ commutes with a conjugate of $J(\mu)$.

Conversely, suppose that $\lambda$ has largest part $h>2$. If $J(\lambda)$ commutes with $J(n)$ then by Proposition \ref{prop:OnePart} all of
its parts have size $h$ or $h-1$. Then we see from Proposition \ref{prop:n-1Part} that $J(\lambda)$ does not commute with a conjugate of $J(n-1,1)$,
except in the single case that $\lambda=(3)$. \end{proof}}

\subsection{Commuting of classes $N(n,n)$ and $N(n+1,n-1)$}

The main object of this section is to prove Proposition~\ref{prop:nncommute}
below, which gives a necessary and sufficient condition for the
classes $N(n,n)$ and $N(n+1,n-1)$ to commute.
This case
is of particular interest because the field enters in an essential way.
In Theorem~\ref{thm:fielddependent} we use this proposition to show
that  for every prime $p$ and positive integer $r$, there exists a pair of
classes of nilpotent matrices which commute over the field $\F_{p^r}$
if and only if $s > r$.

Proposition~\ref{prop:nncommute} is motivated by a natural construction on matrices.
Suppose that $X$ and $Y$ are commuting matrices over a field $K$, and let
\[
D = \left( \begin{matrix} X & 0 \\ 0 & X \end{matrix} \right),
   \qquad
   E = \left( \begin{matrix} Y & I \\ 0 & Y \end{matrix} \right).
\]
Clearly the matrices $D$ and $E$ commute. We may assume that $X$ and $Y$ (and hence~$D$ and~$E$) are nilpotent; then this
construction (and other similar ones) may in principal be used to find new cases of commuting nilpotent classes. The partition labelling the class of $D$
is clearly $2\lambda$, where $\lambda$ labels the class of $X$. The partition labelling the class of $E$ is harder to calculate, and depends on the characteristic of $K$.

We have no occasion
to make systematic use of this construction in the present paper, but the following
example is illustrative. Let $X=Y=J(n)$. Then $D \in N(n,n)$. The partition labelling
the class of $E$ is $(n+1,n-1)$ except in the case that $\chr K$ divides $n$, in which case it is $(n,n)$. It follows that $N(n,n)$ and
$N(n+1,n-1)$ commute over fields of all but finitely many characteristics, the exceptions being the prime divisors of $n$. We note, however,
that the present method gives no information about whether the classes commute in fields of these exceptional characteristics; this gives an indication
that the following proposition is non-trivial.

\begin{proposition}\label{prop:nncommute}
Let $p$ be a prime, and let
\[
e=\left\{\begin{array}{ll} 1 & \textrm{if $p=2$,}\\ 2 &\textrm{otherwise.}\end{array}\right.
\]
Then the nilpotent types $(n,n)$ and $(n+1,n-1)$ commute over $\F_{p^r}$ if and only if
$n$ is not divisible by $p(p^{2r}-1)/e$.
\end{proposition}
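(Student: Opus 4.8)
The plan is to work inside the centralizer of a fixed matrix $A \in N(n,n)$ (write $q = p^r$). Regarding $\F_q^{2n}$ as an $\F_q[A]$-module, it is free of rank $2$ over $R := \F_q[t]/(t^n)$ (with $t$ acting as $A$), so $\Cent A = \Mat_2(R)$. Hence $N(n,n)$ and $N(n+1,n-1)$ commute over $\F_q$ if and only if some $B \in \Mat_2(R)$ belongs to $N(n+1,n-1)$. I would first note that such a $B$ is nilpotent over $\F_q$ precisely when its reduction $\bar B \in \Mat_2(R/tR) = \Mat_2(\F_q)$ is nilpotent, i.e. $\tr B, \det B \in tR$; and that $\bar B = 0$ forces $B = tC$ and hence $B^n = t^n C^n = 0$, so a $B$ realising $N(n+1,n-1)$ must satisfy $\bar B \neq 0$. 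Conjugating by a lift of a suitable element of $\GL_2(\F_q)$, I may take $\bar B = \left(\begin{smallmatrix} 0 & 1 \\ 0 & 0 \end{smallmatrix}\right)$; then $e_2$ and $Be_2$ reduce to a basis of $\F_q^2$, so by Nakayama's lemma they generate $R^2$, and comparing $\F_q$-dimensions gives an isomorphism of $R[x]$-modules $R^2 \cong R[x]/(g)$ under which $B$ becomes multiplication by $x$, where $g(x) = x^2 - (\tr B) x + \det B =: x^2 - t\sigma x + t\delta$ with $\sigma, \delta \in R$.

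The heart of the argument is to read off the Jordan type of multiplication by $x$ on $A := R[x]/(g)$. A direct computation identifies $\ker(x\,\cdot\,)$ with $\mathrm{Ann}_R(\det B)$, whence $\nullity B = \min(v_t(\delta) + 1, n)$; so for $n \ge 3$ the requirement $\nullity B = 2$ forces $\delta = tw$ with $w \in R^\times$. (The cases $n \le 2$ are checked directly: $N(1,1), N(2)$ and $N(2,2), N(3,1)$ commute over every field, consistent with the bound $p(p^{2r}-1)/e \ge 6$.) With $\delta = tw$, writing $x^k = a_k x - t^2 w\, a_{k-1}$ and solving $a_{k+1} = t\sigma a_k - t^2 w\, a_{k-1}$, $a_0 = 0$, $a_1 = 1$, one gets $a_k = t^{k-1} s_k$, where over $\F_q$ the residues obey the Lucas recurrence $\bar s_{k+1} = \bar\sigma\, \bar s_k - \bar w\, \bar s_{k-1}$, $\bar s_0 = 0$, $\bar s_1 = 1$. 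Since $t^n = 0$ in $R$, this yields $x^{n+1} = 0$ automatically, while $x^n = t^{n-1} s_n x$, which vanishes exactly when $\bar s_n = 0$. Together with $\nullity B = 2$ this shows $B \in N(n+1,n-1)$ if $\bar s_n \neq 0$, and $B \in N(n,n)$ if $\bar s_n = 0$.

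It remains to decide for which $n$ one can choose $\sigma, \delta$ making $\bar s_n \neq 0$. If $\mu_1, \mu_2$ denote the roots of $y^2 - \bar\sigma y + \bar w$ (in $\F_q$ or in $\F_{q^2}$) then $\bar s_n = \sum_{i=0}^{n-1} \mu_1^{\,i} \mu_2^{\,n-1-i}$, and as $(\bar\sigma, \bar w)$ runs over $\F_q \times \F_q^\times$ the pair $\{\mu_1, \mu_2\}$ runs over all pairs in $\F_{q^2}^\times$ either contained in $\F_q^\times$ or forming a conjugate pair. Taking $\mu_1 = \mu_2 = 1$ gives $\bar s_n = n$, nonzero iff $p \nmid n$; taking distinct $\mu_1, \mu_2 \in \F_q^\times$ with $\mu_1$ a generator and $\mu_2 = 1$ gives $(\mu_1^n - 1)/(\mu_1 - 1)$, nonzero for a suitable choice iff $q - 1 \nmid n$; and taking a conjugate pair $\xi, \xi^q$ with $\xi$ of order $q^2 - 1$ gives $\xi^n(1 - \xi^{n(q-1)})/(\xi - \xi^q)$, nonzero iff $q^2 - 1 \nmid n(q-1)$, i.e. iff $q + 1 \nmid n$. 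Conversely, if $p \mid n$, $q - 1 \mid n$ and $q + 1 \mid n$ all hold, then every admissible pair gives $\bar s_n = 0$ (the repeated-root case because $p \mid n$; the split case because $q - 1 \mid n$; the conjugate case because $q^2 - 1 \mid n(q-1)$). Thus the classes commute over $\F_{p^r}$ if and only if it is not the case that $p$, $q - 1$ and $q + 1$ all divide $n$; and since $\gcd(q-1,q+1) = \gcd(q-1,2) = e$, one has $\lcm(q-1,q+1) = (q^2-1)/e$, while $p \nmid q^2 - 1$, so those three divisibilities hold together exactly when $p\,(q^2-1)/e \mid n$. This is the stated equivalence.

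I expect the main obstacle to be the middle step: establishing the isomorphism $R^2 \cong R[x]/(g)$ and then controlling the Jordan type carefully enough to see both that $\nullity B = 2$ is forced and that the whole type is governed by the single residue $\bar s_n \in \F_q$. Once that reduction is in hand, the concluding arithmetic — in particular the collapse $\lcm(q-1,q+1) = (q^2-1)/e$ together with coprimality to $p$ — is routine.
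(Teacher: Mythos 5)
Your proof is correct, but it takes a genuinely different route from the paper's. The paper works inside the centralizer of a matrix $M$ of type $(n+1,n-1)$: choosing a cyclic basis, it encodes a nilpotent $Y\in\Cent M$ by the $2\times 2$ matrix $A$ of leading coefficients of its action on the filtration quotients $V_k/V_{k-1}$, shows that $Y$ has exactly two Jordan blocks if and only if $A$ is nonsingular and that its type is $(n,n)$ if and only if $A^n$ has zero top left-hand entry, and then finishes by quoting that every non-scalar element of $\GL_2(\F_{p^r})$ is conjugate to one with zero top left-hand entry and that non-scalar $n$-th powers exist precisely when $n$ is not a multiple of the exponent $p(p^{2r}-1)/e$ of $\PGL_2(\F_{p^r})$. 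You instead work inside the centralizer of a matrix of type $(n,n)$, identified with $\Mat_2(R)$ where $R=\F_q[t]/(t^n)$, present the candidate $B$ as multiplication by $x$ on $R[x]/(x^2-t\sigma x+t^2w)$, and reduce everything to the non-vanishing of the residue $\bar s_n$ of a Lucas-type sequence, which you analyse directly in terms of the eigenvalue pair $\{\mu_1,\mu_2\}$. The two criteria are two faces of the same fact (your $\bar s_n$ is, up to sign, an entry of the $n$-th power of the companion matrix of $y^2-\bar\sigma y+\bar w$, which is the analogue of the paper's condition on $A^n$), but your endgame is more self-contained: where the paper cites the exponent of $\PGL_2(\F_q)$ without proof, your case analysis of repeated, split and conjugate eigenvalue pairs, together with $\lcm(q-1,q+1)=(q^2-1)/e$ and $p\nmid q^2-1$, proves the required arithmetic from scratch, at the cost of some module-theoretic overhead ($\Cent A\cong\Mat_2(R)$, Nakayama, Cayley--Hamilton over $R$, and the identification of $\ker B$ with $\mathrm{Ann}_R(\det B)$), all of which is sound, as are the reductions $\bar B\neq 0$ and $v_t(\delta)=1$ for nullity $2$ when $n\ge 3$, and the separate treatment of $n\le 2$; the only point worth flagging is the degenerate case $q=2$ in your split-pair choice, which is harmless since $q-1=1$ then divides every $n$.
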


\begin{proof} 
Let $M$ be nilpotent of type $(n+1,n-1)$, acting on a space~$V$ over $\F_{p^r}$. Take a cyclic basis
$\{u_i,w_j\mid 0\le i\le n,\ 1\le j\le n-1\}$ for $V$,
with $u_iM=u_{i-1}$ and $w_jM=w_{j-1}$ for all $i$ and~$j$. Let $U_k$ and $W_k$ denote the subspaces
$\langle u_j\mid 0\le j\le k\rangle$ and $\langle w_j\mid 1\le j\le k\rangle$ respectively---we take $W_0=\{0\}$ and $W_n=W_{n-1}$. Let $V_k$
denote $U_{k}\oplus W_k$ for all $k$. For each pair $(x,y)$ with $x\in V_{n-1}$ and $y\in V_{n-2}$, there is an unique nilpotent element $Y$ of
$\Cent M$ such that $u_nY=x$ and $w_{n-1}Y=y$; it follows
from Lemma~\ref{lemma:cyclic} that
all of the nilpotent elements of $\Cent M$ can be obtained in this way.

Let $Y\in \Cent M$ be nilpotent, and define $\alpha,\beta,\gamma,\delta$ by
\begin{eqnarray*}
u_{n}Y & \in & \alpha u_{n-1}+\gamma w_{n-1}+ V_{n-2}, \\
w_{n-1}Y & \in & \beta u_{n-2}+\delta w_{n-2}+ V_{n-3}.
\end{eqnarray*}
The reader may find helpful the following diagrammatic
representation of~$Y$.

\vspace{-6pt}

\[
\hspace{-0.425in}
\newcommand{\tl}[1]{\displaystyle #1 \atop \displaystyle \phantom{\mid} \bullet}
\newcommand{\bl}[1]{\displaystyle \bullet \atop \displaystyle \phantom{\mid} #1}
\xymatrix@R=28pt@C=28pt{
\tl{u_n} \ar@<-3.5pt>[r]^{\alpha}\ar[dr]_<<<<{\gamma} &
\tl{u_{n-1}} \ar@<-3.5pt>[r]^{\alpha}\ar[dr]_<<{\gamma} &
\tl{u_{n-2}} \ar@<-3.5pt>[r]^{\alpha}\ar[dr]_<<{\gamma} &
\tl{u_{n-3}} \ar@<-3.5pt>@{..}[r] &
\tl{u_3} \ar@<-3.5pt>[r]^{\alpha}\ar[dr]_<<{\gamma} &
\tl{u_2} \ar@<-3.5pt>[r]^{\alpha}\ar[dr]_<<{\gamma} &
\tl{u_1} \ar@<-3.5pt>[r]^{\alpha} &
\tl{u_0} & \\
&
\bl{w_{n-1}} \ar@<6pt>[r]_\delta\ar[ur]_>>\beta &
\bl{w_{n-2}} \ar@<6pt>[r]_\delta\ar[ur]_>>\beta &
\bl{w_{n-3}} \ar@<6pt>@{..}[r] &
\bl{w_3} \ar@<6pt>[r]_\delta\ar[ur]_>>\beta &
\bl{w_2} \ar@<6pt>[r]_\delta\ar[ur]_>>\beta &
\bl{w_1} \ar[ur]_>>\beta
}
\]

\smallskip

The matrix $A=\left(\begin{array}{cc}\alpha&\beta\\ \gamma&\delta\end{array}\right)$ describes the maps induced by $Y$,
\[
\overline{Y}_k:\ \frac{V_{k}}{V_{k-1}}\ \longrightarrow\ \frac{V_{k-1}}{V_{k-2}},
\]
where $k$ is in the range $1<k<n$. Outside of this range, the map $\overline{Y}_n$ has domain $\langle\ u_n+V_{n-1}\ \rangle$ of dimension $1$,
while $\overline{Y}_1$
has codomain $\langle u_0\rangle$ of dimension $1$. These maps are represented by the first row and the first column of $A$ respectively.

\vbox{
\begin{claim} The kernel of $Y$ has dimension $2$ if and only if $A$ is non-singular.
\end{claim}
\begin{proof}[Proof of Claim]
If $A$ is invertible, then the maps $\overline{Y}_k$ are injective for $k>1$.
It follows easily that if $v\in\ker Y$ then $v\in V_1$.
It is now easy to check that $v \in \langle u_1, \beta u_2-\alpha w_1 \rangle$ and so $\nullity Y=2$ in this case.

Conversely, suppose that $A$ is singular. If $\alpha=\beta=0$ then $V_1\subseteq\ker Y$, and so $\nullity Y\ge 3$. So let us suppose that
$\alpha$ and
$\beta$ are not both $0$. Then there
exists $z\in V_1$ such that $zY=u_0$. Since~$A$ is
singular, the map $\overline{Y}_2$ has
a non-trivial kernel, and it follows that there exists $v\in V_2\setminus V_1$ such that $vY\in V_0=\langle u_0\rangle$. Say that
$vY=\sigma u_0$; now we have a set of three kernel vectors, $\{u_0,\ \beta u_1-\alpha w_1,\ v-\sigma z\}$,
which is linearly independent since~$v-\sigma z\notin V_1$. So $\nullity Y\ge 3$ in this case as well.\end{proof}
}

The dimension of $\ker Y$ tells us the number of parts in the partition associated with the class of $Y$. This partition therefore has two parts if
and only if the matrix $A$ is non-singular. Note that since $Y^{n+1}=0$, no part can be larger than $n+1$, and therefore the only possible partitions
are $(n+1,n-1)$
and $(n,n)$. The former corresponds to the class of $M$ itself, while the
latter case occurs when $Y^n=0$, which is the case if and only if $u_n\in \ker Y^n$.

Now we observe that
\begin{eqnarray*}
u_nY^n &=& \overline{Y}_1\circ \overline{Y}_2\circ\cdots\circ\overline{Y}_n (v_n+V_{n-1})\\
&=& u_0 R_1 A^{n-2} C_1 \;,
\end{eqnarray*}
where $R_1$ and $C_1$ are, respectively, the first row and the first column of $A$. So the partition of~$Y$ is $(n,n)$ precisely
when~$R_1A^{n-2}C_1 = (0)$, or equivalently, when the matrix $A^n$ has a zero for its top left-hand entry.

\begin{claim}
Every element of $\GL_2(\F_{p^r})$ is
either a scalar matrix, or else is conjugate to a matrix
with a zero for its top left-hand entry.
\end{claim}
\begin{proof}[Proof of Claim]Every quadratic polynomial over $\F_{p^r}$ is the characteristic polynomial of a
unique similarity class of non-scalar matrices. Thus if $X$
is a non-scalar matrix with characteristic polynomial $x^2+\sigma x+\tau$, then $X$ is conjugate to
\[
\scriptstyle \left(\scriptstyle\begin{array}{cc} 0& 1\\  -\tau&-\sigma\end{array}\right),
\]
 as required.\end{proof}

Now suppose that $\GL_2(\F_{p^r})$ contains a non-scalar element $X$ which is an $n$-th power in the group.
Then $X$ has
a conjugate $X'$ with a zero for its top left-hand entry. Clearly $X'$ is also an $n$-th power; by choosing $a,b,c,\delta$ to be the entries of an
$n$-th root of $X'$, we can construct a matrix~$Y$ in $\Cent M$ whose type is $(n,n)$.

There exist non-scalar $n$-th powers in $\GL_2(\F_{p^r})$ provided
that $n$ is not divisible by the exponent of $\PGL_2(\F_{p^r})$. This exponent is
$p(p^{2r}-1)/e$, and the proof of Proposition \ref{prop:nncommute} is complete.
\end{proof}

\begin{remark}
This argument also goes to show that the nilpotent types $(n,n)$ and
\hbox{$(n-1,n+1)$} commute over any infinite field $K$, since the
exponent of $\PGL_2(K)$ is infinite.
\end{remark}

\begin{theorem}\label{thm:fielddependent}
Let $p$ be a prime, and $r\ge 1$. There exist partitions $\lambda$ and $\mu$, such that~$N(\lambda)$ commutes with $N(\mu)$ over the fields
$\F_{p^{a}}$ for $a>r$, but not for $a\le r$.
\end{theorem}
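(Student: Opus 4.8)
The plan is to apply Proposition~\ref{prop:nncommute} directly, choosing $n$ so that the divisibility condition $p(p^{2a}-1)/e \mid n$ holds exactly when $a\le r$. First I observe that Proposition~\ref{prop:nncommute} says $N(n,n)$ and $N(n+1,n-1)$ commute over $\F_{p^a}$ if and only if $n$ is \emph{not} divisible by $p(p^{2a}-1)/e$, where $e=1$ if $p=2$ and $e=2$ otherwise. So I want an $n$ with the property that $p(p^{2a}-1)/e$ divides $n$ for every $a\le r$, but $p(p^{2a}-1)/e$ fails to divide $n$ for every $a>r$.

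The natural choice is to take $n$ to be a suitable multiple of $\lcm\bigl(p(p^{2}-1)/e,\ p(p^{4}-1)/e,\ \ldots,\ p(p^{2r}-1)/e\bigr)$, and then argue that for $a>r$ the integer $p(p^{2a}-1)/e$ is too large (or has a prime factor too large) to divide that $\lcm$. The cleanest way to make the second half work is a size argument on prime factors: by a theorem on primitive prime divisors (Zsygmondy's theorem), for each sufficiently large $a$ the number $p^{2a}-1$ has a prime divisor $\ell_a$ that does not divide $p^{2b}-1$ for any $b<a$ (with the small exceptional cases handled separately). Setting $n = \lcm_{a\le r}\bigl(p(p^{2a}-1)/e\bigr)$, the prime $\ell_a$ for $a>r$ divides $p(p^{2a}-1)/e$ but not $n$, so $p(p^{2a}-1)/e\nmid n$; meanwhile each $p(p^{2a}-1)/e$ for $a\le r$ divides $n$ by construction. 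Then $\lambda=(n+1,n-1)$ and $\mu=(n,n)$ work.

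Alternatively — and this avoids Zsygmondy's theorem entirely — one can sidestep primitive prime divisors by a direct computation: the multiplicative order of $p$ modulo $\ell_a$ being exactly $2a$ is all that is needed, and one can instead simply choose $n$ cleverly using the fact that $p^{2a}-1$ grows, ensuring $p(p^{2(r+1)}-1)/e > n$ is impossible to arrange directly — so the prime-divisor route really is the right one. I expect the main obstacle to be precisely this step: verifying that for every $a>r$ the quantity $p(p^{2a}-1)/e$ genuinely fails to divide the chosen $n$. This requires either invoking Zsygmondy (and dispatching its finitely many exceptional pairs, none of which causes trouble here since we are free to enlarge $n$ by an extra factor) or a careful elementary argument about orders of $p$ modulo primes. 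The rest — that $p(p^{2a}-1)/e \mid n$ for $a\le r$, and the translation back to commuting nilpotent classes — is immediate from the construction and from Proposition~\ref{prop:nncommute}.
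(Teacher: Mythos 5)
Your proposal follows essentially the same route as the paper: take $\lambda=(n+1,n-1)$ and $\mu=(n,n)$ with $n$ a common multiple of the quantities $p(p^{2a}-1)/e$ for $a\le r$ (the paper takes $n=pL/e$ with $L=\lcm\{p^{2s}-1 \mid 1\le s\le r\}$), and invoke Zsigmondy's theorem to produce, for each $a>r$, a prime dividing $p^{2a}-1$ but none of the $p^{2s}-1$ with $s<a$, hence not dividing $n$. One small correction: the Zsigmondy exception $(k,t)=(2,6)$ (i.e.\ $p=2$, $a=3$) cannot be dispatched by ``enlarging $n$ by an extra factor'', since that pushes in the wrong direction for the required non-divisibility; it should instead be handled directly, as the paper does by applying Zsigmondy to $4^3-1=2^6-1$, the point being that $7$ divides $2^6-1$ but divides neither $2^2-1$ nor $2^4-1$.
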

\begin{proof} We use a famous theorem of Zsigmondy \cite{Zsigmondy} which states that if $k\ge 2$, $t\ge 3$, and $(k,t)\neq (2,6)$, then there is a
prime divisor of $k^t-1$ which does not divide $k^s-1$ for any $s$
such that $1 \le s<t$.

Let $L=\lcm(\{p^{2s}-1 \mid 1\le s\le r\})$, and let $n=pL/e$. 
We observe that $p(p^{2a}-1)/e$ divides~$n$ whenever $a\le r$. When $a>r$
we invoke Zsigmondy's Theorem with $(k,t)=(p,2a)$, or with $(k,t)=(4,3)$ if $p=2$ and $t=3$; this tells us
that $p^{2a}-1$ has a prime divisor $q$ which does not divide $p^{2s}-1$ for $s<a$. Clearly $q$ does not divide $n$, and so $p(p^{2a}-1)/e$
does not divide~$n$. It now follows from Proposition \ref{prop:nncommute} that the partitions $(n,n)$ and $(n+1,n-1)$ have the property stated in the
theorem.
\end{proof}

\begin{remark}
 The authors have found no case where the commuting of nilpotent classes depends on the field of definition in dimension less than $12$.
 This is the dimension of the smallest example given by Proposition \ref{prop:nncommute}: that of
 $N(6,6)$ and $N(7,5)$, which commute over every field except $\F_2$.
\end{remark}

\subsection{Classes corresponding to two-part partitions}

We end by establishing a result which, together with results already presented, will allow us to classify, over any field $K$,
pairs of partitions $(\lambda,\mu)$ with at most two
parts, such that~$N(\lambda)$ and $N(\mu)$ commute over $K$. We note that classes with at most $2$ parts are precisely those
whose elements have nullity at most $2$.

\vbox{
\begin{proposition}\label{prop:twopart}
Let $\lambda=(a,b)$ and $\mu=(c,d)$, where $a+b=c+d$ and $a> c\ge d>b$. If $N(\lambda)$ and $N(\mu)$ commute over a field $K$
then $c=d$ and $a-b=2$.
\end{proposition}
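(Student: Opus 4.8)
The statement is vacuous unless the inequalities $b<d\le c<a$ have an integer solution, i.e.\ unless $a-b\ge 2$; moreover, once $a-b=2$ is known, $b<d\le c<a=b+2$ forces $c=d=b+1$, so the conclusion $c=d$ comes for free. The plan is therefore to prove that $a-b=2$. Fix a nilpotent matrix $X$ of type $(a,b)$ acting on $V$, and note that (since $\ty N(\lambda)=\ty N(\mu)$-commuting is by definition witnessed by a pair in $\Cent X$) it is enough to show that when $a-b\ge 3$ there is \emph{no} nilpotent $Y\in\Cent X$ of type $(c,d)$ with $b<d\le c$. Here $c<a$ is automatic, since $d\ge b+1$ and $c+d=a+b$ give $c\le a-1$. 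This refines the argument of Proposition~\ref{prop:nncommute}, which is exactly the case $a-b=2$.

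First I would reformulate the condition on the type of $Y$. Since $\nullity Y^j\le j\cdot\nullity Y$ for any matrix $Y$, a nilpotent $Y$ has type $(c,d)$ with two parts and both parts exceeding $b$ if and only if $\nullity Y=2$ and $\nullity Y^{b+1}=2(b+1)$. Next, following the proof of Proposition~\ref{prop:nncommute}, I would set up $\Cent X$ via a cyclic basis for $X$ consisting of two chains, of lengths $a$ and $b$. By Lemma~\ref{lemma:cyclic}, $Y$ is determined by the images of the two cyclic vectors, and $Y$ is nilpotent precisely when the operator it induces on $V/XV$ is nilpotent; its type is then recovered from the sequence $\rank Y^k$ via Lemma~\ref{lemma:partsizecalculation}. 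As in the diagram there, I would encode $Y$ by a $2\times 2$ matrix $A$ describing its action modulo lower-order terms on the ``bulk'' graded layers $\ker X^k/\ker X^{k-1}$ ($1\le k\le b$, each of dimension $2$), together with data for the $a-b$ ``boundary'' layers at heights $b+1,\dots,a$ (each of dimension $1$).

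The heart of the proof is to show that the two conditions $\nullity Y=2$ and $\nullity Y^{b+1}=2(b+1)$ are incompatible once $a-b\ge 3$. I would argue that these conditions force $A$ to be invertible: if $A$ were singular one produces, just as in the Claim in the proof of Proposition~\ref{prop:nncommute}, either a third linearly independent vector in $\ker Y$ (contradicting $\nullity Y=2$), or a stalling of the sequence $\nullity Y^k$ before it can reach $2(b+1)$ (contradicting the second condition). With $A$ invertible, however, one computes $\rank Y^k$ for every $k$ by iterating the explicit description of $Y$; the point is that an invertible $A$ propagates a Jordan block of $Y$ all the way up through the $a-b$ one-dimensional boundary layers, so that $\rank Y^{b+1}$ is strictly larger than the value $a-b-2$ demanded by $\nullity Y^{b+1}=2(b+1)$. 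When $a-b=2$ there are only two boundary layers, and the relevant product---the analogue of the identity $u_nY^n=u_0R_1A^{n-2}C_1$ of Proposition~\ref{prop:nncommute}, in which $C_1$ enters the bulk, $A^{n-2}$ traverses it, and $R_1$ leaves it---can vanish, which is exactly what yields the borderline type $(b+1,b+1)$; with three or more boundary layers no such cancellation is available.

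The step I expect to be the main obstacle is this last rank computation: keeping track of how the one-dimensional boundary layers feed into and out of the two-dimensional bulk governed by $A$. The cleanest way to organise it is probably to write the entire rank sequence $(\rank Y^k)_k$ explicitly in terms of $A$ and the boundary data---equivalently, in terms of the $x$-adic valuations of the entries of $Y$ regarded as a matrix of maps between cyclic modules---splitting into the finitely many cases these valuations allow, and in each case comparing the resulting sequence with the rank sequence of a hypothetical type $(c,d)$ satisfying $b<d\le c<a$.
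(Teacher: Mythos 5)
Your preliminary reductions are fine: it does suffice to rule out, for $a-b\ge 3$, a nilpotent $Y\in\Cent X$ (with $X\in N(a,b)$) satisfying $\nullity Y=2$ and $\nullity Y^{b+1}=2(b+1)$, and this is also where the paper starts. The gap is in the step you yourself flag as ``the main obstacle'': it is not carried out, and the mechanism you propose for it does not work. First, the graded formalism of Proposition~\ref{prop:nncommute} does not transfer when the parts differ by at least $3$: a nilpotent element of $\Cent X$ need not map $\ker X^k$ into $\ker X^{k-1}$ (with $a=5$, $b=2$ and cyclic vectors $v,w$ of heights $5,2$, the element $Y$ with $vY=0$, $wY=vX^{3}$ sends $\ker X^2$ outside $\ker X$), and the endomorphism that a nilpotent $Y$ induces on each two-dimensional layer $\ker X^k/\ker X^{k-1}$ is always strictly triangular, so the literal ``$2\times 2$ matrix $A$ on the bulk layers'' is never invertible. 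The reason is structural: writing $vY\equiv\alpha vX+\beta w$ and $wY\equiv\gamma vX^{a-b}+\delta wX$ modulo lower terms, the coefficients $\beta$ and $\gamma$ shift any chain-compatible filtration by a total of $a-b\ge 3$ steps, so only $\alpha$ and $\delta$ survive in the degree $-1$ part; there is no analogue of the transition matrix whose powers compute the rank sequence. Second, under the natural repair (taking $A=\left(\begin{smallmatrix}\alpha&\beta\\ \gamma&\delta\end{smallmatrix}\right)$ to be the matrix of leading coefficients), your key claim ``$A$ invertible $\Rightarrow\rank Y^{b+1}>a-b-2$'' is false: with $a=5$, $b=2$, take $vY=w$, $wY=vX^{3}$. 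Then $A=\left(\begin{smallmatrix}0&1\\ 1&0\end{smallmatrix}\right)$ is invertible, yet $Y^{3}=0$, so $\rank Y^{b+1}=0<1=a-b-2$ (here $Y$ has partition $(3,3,1)$). So invertibility of $A$ does not ``propagate a Jordan block through the boundary layers''; in that regime the contradiction must come from the nullity hypothesis, not from $\rank Y^{b+1}$.

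The decisive invariant is the single coefficient $\alpha$, not $\det A$, and the two cases are settled by different invariants. The paper's proof: since $a-b>2$, the subspace $W=\ker X^{a-2}$ contains the whole short chain and satisfies $W\subseteq\ker Y^{a-2}$ for every nilpotent $Y\in\Cent X$, and $vY\in\alpha vX+W$. If $\alpha\neq 0$, then $vY^{a-1}=\alpha^{a-1}vX^{a-1}\neq 0$, so $v$ has $Y$-height $a$ and a two-part $Y$ lies in $N(a,b)$ itself, which has a part equal to $b$ and so is not of the required shape. If $\alpha=0$, then $vX^{a-2}$, $vX^{a-1}$, and a suitable nonzero combination of $vX^{a-3}$ and $wX^{b-1}$ are three linearly independent vectors in $\ker Y$, so $\nullity Y\ge 3$. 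Your outline becomes correct if ``$A$ invertible'' is replaced throughout by ``$\alpha\neq 0$'' and the two branches are closed by these two arguments; as it stands, the postponed rank computation is precisely the missing proof, and the route you sketch for it breaks down.
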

\begin{proof}
The case that $c=d$ and $a-b=2$ has been dealt with in Proposition \ref{prop:nncommute} and the ensuing remark.
We may therefore suppose that $a-b>2$. Let $M\in N(\lambda)$, and let $\{v,vM,\dots,vM^{a-1},w,wM,\dots,wM^{b-1}\}$ be a cyclic basis for $M$.
Let $W=\ker M^{a-2}$; so $W$ is the span of all the basis vectors apart from $v$ and $vM$. Suppose that $Y$ is nilpotent and commutes with $M$;
then it is not hard to see that $W\subseteq \ker Y^{a-2}$. Since $Y$ is nilpotent we have
$vY\in\alpha vM +W$ for some $\alpha\in K$.

Suppose first that $\alpha\neq 0$; then we see that $vY^{a-1}= \alpha^{a-1}vM^{a-1}$, while
$vY^a=0$. Hence $v$ is a cyclic vector for $Y$ of height $a$. It follows that if the partition associated with $Y$ has only $2$ parts then it
must be $\lambda$.

Suppose alternatively that $\alpha=0$, so $vY\in W$.
We shall show
that $\nullity Y \ge 3$, and so the partition associated with $Y$ has more than $2$ parts.
First observe that $vM^{a-2}$ and $vM^{a-1}$ are in $\ker Y$, since $vM^{a-2}Y=vYM^{a-2}\in WM^{a-2}=\{0\}$. Furthermore it is easy to show that
$vM^{a-3}Y$ and $wM^{b-1}Y$ both lie in $\langle vM^{a-1}\rangle$, and hence a non-zero linear combination of these two vectors lies in
$\ker Y$. We have therefore found three linearly independent vectors in $\ker Y$, as required.\end{proof}}

The following theorem simply collects together elements of Propositions \ref{prop:OnePart}, \ref{prop:nncommute} and \ref{prop:twopart}; it
requires no further proof.

\vbox{
\begin{theorem}\label{thm:2partcommuting}
Suppose that $\lambda$ and $\mu$ are partitions of $n$ with at most two parts, and that $N(\lambda)$ and $N(\mu)$ commute over a field $K$.
Assume without loss of generality that 
the largest part of $\lambda$ is at least as large
as the largest part of $\mu$.
Then one of the following holds.
\begin{enumerate}
\item $\lambda=\mu$.
\item $n=2m$, $\lambda=(n)$ and $\mu=(m,m)$.
\item $n=2m$, $\lambda=(m+1,m-1)$, $\mu=(m,m)$ and, if $K$ is finite then the exponent of $\PGL_2(K)$ does not divide $m$.
\item $n=2m+1$, $\lambda=(n)$ and $\mu=(m+1,m)$.
\end{enumerate}
\end{theorem}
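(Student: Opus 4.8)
The plan is to run a short case analysis on the number of parts of $\lambda$ and $\mu$, feeding each case into the appropriate one of Propositions~\ref{prop:OnePart}, \ref{prop:nncommute} and~\ref{prop:twopart}. Recall that by hypothesis $\lambda$ and $\mu$ are partitions of $n$ with at most two parts, that $N(\lambda)$ and $N(\mu)$ commute over $K$, and that the largest part of $\lambda$ is at least the largest part of $\mu$. The first reduction is that if $\mu=(n)$ has a single part, then the largest part of $\lambda$ is at least $n$, and since $|\lambda|=n$ this forces $\lambda=(n)=\mu$, which is alternative~(1). Hence from now on we may assume $\lambda$ has two parts whenever $\mu$ does, and in particular whenever $\mu$ has a single part we are done.

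Next I would treat the case $\lambda=(n)$ with $\mu=(h_1,h_2)$ having two parts. Since $J(n)$ commutes with a conjugate of $J(\mu)$, Proposition~\ref{prop:OnePart} gives $h_1-h_2\le 1$, so $\mu$ is almost rectangular; thus either $n=2m$ and $\mu=(m,m)$, which is alternative~(2), or $n=2m+1$ and $\mu=(m+1,m)$, which is alternative~(4). The remaining case is that $\lambda=(a,b)$ and $\mu=(c,d)$ both have exactly two parts, with $a\ge b$, $c\ge d$, $a+b=c+d=n$, and $a\ge c$. If $a=c$ then $b=d$ and $\lambda=\mu$, alternative~(1). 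Otherwise $a>c$; since $a+b=c+d$ this forces $b<d$, and together with $c\ge d$ we obtain the chain $a>c\ge d>b$. This is exactly the hypothesis of Proposition~\ref{prop:twopart}, which therefore applies and yields $c=d$ and $a-b=2$. Writing $n=2m$, this means $\mu=(m,m)$ and $\lambda=(m+1,m-1)$, the shape appearing in alternative~(3).

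It remains to pin down the arithmetic side condition in~(3). Since we are assuming $N(m+1,m-1)$ and $N(m,m)$ commute over $K$, Proposition~\ref{prop:nncommute}---together with the remark following it, which handles infinite $K$---tells us that when $K=\F_{p^r}$ is finite we must have $p(p^{2r}-1)/e\nmid m$, and this quantity is precisely the exponent of $\PGL_2(\F_{p^r})$. So the commuting hypothesis forces the exponent of $\PGL_2(K)$ not to divide $m$, which is the condition recorded in alternative~(3), completing the argument.

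I do not expect a genuine obstacle here: the theorem is, as stated, a repackaging of the three cited propositions. The only points that need a moment's care are verifying that the inequality $a>c$ really does land us in the hypothesis $a>c\ge d>b$ of Proposition~\ref{prop:twopart} (this uses $a+b=c+d$ crucially), confirming that the two-part almost-rectangular partitions are exhausted by $(m,m)$ and $(m+1,m)$, and making sure the $\PGL_2$-condition is read off in the correct logical direction---namely as a \emph{necessary} condition for commuting, which is exactly the form in which the theorem asserts it.
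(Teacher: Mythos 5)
Your proposal is correct and matches the paper's approach: the paper simply states that the theorem collects together elements of Propositions~\ref{prop:OnePart}, \ref{prop:nncommute} and~\ref{prop:twopart} and requires no further proof, and your case analysis is exactly the routine bookkeeping implicit in that remark. In particular your verification that $a>c$ together with $a+b=c+d$ and $c\ge d$ yields the hypothesis $a>c\ge d>b$ of Proposition~\ref{prop:twopart}, and your reading of the $\PGL_2$ condition as a necessary condition via Proposition~\ref{prop:nncommute}, are both accurate.
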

}


\end{document}